\newtheorem{Thm}{Theorem}[section]
\newtheorem{Prop}[Thm]{Proposition}
\newtheorem{Cor}[Thm]{Corollary}
\newtheorem{Lem}[Thm]{Lemma}
\theoremstyle{remark}
\newtheorem{Ex}[Thm]{Example}
\newtheorem*{Notation}{Notation}
\newtheorem*{Ack}{Acknowledgments}
\numberwithin{equation}{section}
\newcommand{\Order}{\mathcal{O}}
\newcommand{\into}{\hookrightarrow}
\newcommand{\onto}{\twoheadrightarrow}
\newcommand{\isomto}{\overset{\sim}{\to}}
\newcommand{\isomfrom}{\overset{\sim}{\leftarrow}}
\newcommand{\tensor}{\mathbin{\otimes}}
\newcommand{\closure}[1]{\overline{#1}}
\newcommand{\Z}{\mathbb{Z}}
\newcommand{\Q}{\mathbb{Q}}
\newcommand{\R}{\mathbb{R}}
\newcommand{\F}{\mathbb{F}}
\newcommand{\et}{\mathrm{et}}
\newcommand{\id}{\mathrm{id}}
\newcommand{\Gm}{\mathbf{G}_{m}}
\newcommand{\tor}{\mathrm{tor}}
\newcommand{\dirlim}{\varinjlim}
\newcommand{\sep}{\mathrm{sep}}
\newcommand{\ur}{\mathrm{ur}}
\newcommand{\Mod}{\mathrm{Mod}}
\newcommand{\ideal}[1]{\mathfrak{#1}}
\newcommand{\Adele}{\mathbb{A}}
\newcommand{\cl}{\mathrm{cl}}
\DeclareMathOperator{\Gal}{Gal}
\DeclareMathOperator{\Hom}{Hom}
\DeclareMathOperator{\Aut}{Aut}
\DeclareMathOperator{\Ker}{Ker}
\DeclareMathOperator{\Coker}{Coker}
\DeclareMathOperator{\Spec}{Spec}
\DeclareMathOperator{\Ab}{Ab}
\DeclareMathOperator{\Pic}{Pic}
\DeclareMathOperator{\Lie}{Lie}
\DeclareMathOperator{\Disc}{Disc}
\DeclareMathOperator{\rank}{rk}
\DeclareMathOperator{\sheafhom}{\mathscr{H}\mkern-5mu\mathit{om}}
\DeclareMathOperator{\Cl}{Cl}
\DeclareMathOperator{\length}{length}
\DeclareFontFamily{U}{wncy}{}
\DeclareFontShape{U}{wncy}{m}{n}{<->wncyr10}{}
\DeclareSymbolFont{mcy}{U}{wncy}{m}{n}
\DeclareMathSymbol{\Sha}{\mathord}{mcy}{"58}
\title[One-motives over function fields]
	{Special values of $L$-functions of one-motives over function
	fields}
\author{Thomas H. Geisser}
\address{
	Rikkyo University, Ikebukuro, Tokyo, Japan
}
\email{geisser@rikkyo.ac.jp}
\author{Takashi Suzuki}
\address{
	Chuo University, Kasuga, Tokyo, Japan
}
\email{tsuzuki@gug.math.chuo-u.ac.jp}
\thanks{
The first author is supported by JSPS Grant-in-Aid 18K03258.
	The second author is partially supported by JSPS Grant-in-Aid  18J00415.}
\date{October 16, 2022}
\subjclass[2010]{Primary: 11G40; Secondary: 14F20, 14F42}
\keywords{1-motives; Birch and Swinnerton-Dyer conjecture; global function
fields; Weil-\'etale cohomology; Tamagawa number formula}
\begin{document}

\begin{abstract}
The purpose of this paper is to give a formula 
for the leading coefficient at $s=1$ of the $L$-function of one-motives 
over function fields in terms of Weil-\'etale cohomology, 
generalizing the Weil-\'etale version of the Birch and Swinnerton-Dyer
conjecture in the authors' previous work. As a consequence we express 
the Tamagawa number of a torus introduced by Ono-Oesterl\'e in terms
of Weil-\'etale cohomology, and reprove their Tamagawa number formula.
\end{abstract}

\maketitle

%%%%%%%%%%%%%%%%%%%%%%%%%%%%%%%%%%%%%%%%%%%%%%%%%%%%%%%%%%%%%%%%%%%%%%%%%%%%%

\section{Introduction}
\label{sec: Introduction}

Let $S$ be a proper smooth geometrically connected curve
over a finite field $\F_{q}$ with function field $K$,
and let $M= [X \to G]$ be a $1$-motive over $K$, that is,
a lattice $X$ and a semi-abelian variety $G$ over $K$
placed in degrees $-1$ and $0$, respectively.
Consider  the Hasse-Weil 
$L$-function $L(M, s)$ of the $l$-adic representation $V_{l}(M)(-1)$ over $K$
	\[
			L(M, s)
		=
			\prod_{v}
			\det \bigl(
				1 - \varphi_{v} N(v)^{- s} \,|\, V_{l}(M)(-1)^{I_{v}}
			\bigr)^{-1},
	\]
where $l$ is a prime different from the characteristic $p$ of $K$,  
$v$ runs through the places of $S$, $I_{v}$ is the inertia group at $v$, 
$\varphi_{v}$ is the geometric Frobenius at $v$, and
and $N(v)$ the order of the residue field $k(v)$ at $v$.
Denote the N\'eron model and the connected N\'eron model of $G$ over $S$ by
$\mathcal{G}$ and $\mathcal{G}^{0}$, respectively,
and let $\Lie \mathcal{G}^{0}$ be the Lie algebra of $\mathcal{G}^{0}$
(a locally free sheaf).
Let $\mathcal{X} = j_{\ast} X$, where $j \colon \Spec K \into S$ is the inclusion.
Define $\mathcal{X}^{\Delta}$ by the fiber product
	\[
		\begin{CD}
				\mathcal{X}^{\Delta} @>>> \mathcal{G}^{0}
				\\ @VVV @VVV \\
				\mathcal{X} @>>> \mathcal{G},
		\end{CD}
	\]
and let $\mathcal{M}^{\Delta}$ be the complex of \'etale sheaves 
$[\mathcal{X}^{\Delta} \to \mathcal{G}^{0}]$ on $S$. If we set
\begin{equation}\label{rm}
 r_{M} :=
-\sum_{i}(-1)^{i}\cdot i \cdot\dim H^{i}_W(S, \mathcal{M}^{\Delta}) \tensor_{\Z} \Q,
\end{equation}
then 
$$ r_M= \rank A(K) - \rank X(K) - \rank Y(K)
+ \sum_{v}\rank (\mathcal{X} / \mathcal{X}^{\Delta})(k(v)),$$
where $A$ is the abelian variety quotient of $G$ and
 $Y$ is the character module of the torus part of $G$.
Note that $r_{M}$ can be negative. We give the following formula
in the spirit of Lichtenbaum \cite{Lic09}.

\begin{Thm} \label{mainthm}
Assume that the Tate-Shafarevich group $\Sha(A)$ of $A$ is finite. 
Then the groups 
$H_{W}^{\ast}(S, \mathcal{M}^{\Delta})$ are finitely generated,
$L(M, s)$ has a zero of order $r_M$ at $s=1$, and  
	\[
			\lim_{s \to 1}
				\frac{
					L(M, s)
				}{
					(s - 1)^{r_{M}}
				}
		=
			(-1)^{\rank X(K)} \cdot
			\chi_{W}(S, \mathcal{M}^{\Delta})^{-1} \cdot
			q^{\chi(S, \Lie \mathcal{G}^{0})} \cdot
			(\log q)^{r_{M}}.
	\]
Here $\chi_{W}(S, \mathcal{M}^{\Delta})$ is
the Euler characteristic of the complex $H^*_{W}(S, \mathcal{M}^{\Delta})$
with differential the cup product with a generator $e\in H^1_W(S,\Z)\cong \Z$.
\end{Thm}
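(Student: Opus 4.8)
The plan is to express both sides of the identity through the single perfect complex $R\Gamma_{W}(S, \mathcal{M}^{\Delta})$ and then to reduce, along the weight filtration of $M$, to the abelian-variety case proved in the authors' previous work; the $\Delta$-construction and the use of $j_{*}X$ are arranged precisely so that this reduction is clean away from the finitely many bad fibres.

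\textbf{The cohomological side.} First I would show that $R\Gamma_{W}(S, \mathcal{M}^{\Delta})$ is a perfect complex of abelian groups, which gives the finite-generation assertion. Away from $p$ this rests on the usual description of Weil-\'etale cohomology as $\mathrm{fib}(\varphi - 1)$ on the geometric \'etale cohomology $R\Gamma(S_{\bar{\F}_{q}}, T_{l}\mathcal{M}^{\Delta})$ together with the finiteness of $\Sha(A)$ and finiteness properties of N\'eron models; at $p$ it uses flat (fppf) duality in the style of Kato--Trihan. A compatible Artin--Verdier-type duality pairing $R\Gamma_{W}(S, \mathcal{M}^{\Delta})$ with the analogous complex for the Cartier dual $1$-motive then shows that $(H^{*}_{W}(S, \mathcal{M}^{\Delta}), \cup e)$ is acyclic after $\tensor \Q$, so that $\chi_{W}(S, \mathcal{M}^{\Delta})$ is a well-defined nonzero rational number and $r_{M}$, defined in \eqref{rm}, is the associated secondary Euler characteristic. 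For the link with the $L$-value: by the Grothendieck--Lefschetz trace formula $L(M, s)$ is an alternating product of $\det(1 - \varphi q^{-s} \mid H^{i}(S_{\bar{\F}_{q}}, j_{*}V_{l}(M)(-1)))$, whose behaviour at $s = 1$ is governed by the $\varphi$-invariants and coinvariants of these groups; identifying them (up to the bad-fibre corrections below) with $H^{i}(S_{\bar{\F}_{q}}, T_{l}\mathcal{M}^{\Delta}) \tensor \Q_{l}$ and feeding in the $\mathrm{fib}(\varphi - 1)$ description produces, by the mechanism of Lichtenbaum and Geisser, the order $r_{M}$, the factor $(\log q)^{r_{M}}$ and the prime-to-$p$ part of $\chi_{W}(S, \mathcal{M}^{\Delta})^{-1}$. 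The remaining power of $q$ is the $p$-part: comparing \'etale with fppf and with coherent cohomology, the de Rham realization of $\mathcal{M}^{\Delta}$ contributes $\Lie \mathcal{G}^{0}$ in degree $0$ and a lattice term in degree $-1$, and Riemann--Roch for $\Lie \mathcal{G}^{0}$ on $S$ turns this into $q^{\chi(S, \Lie \mathcal{G}^{0})}$, the degree-$(-1)$ term producing the sign $(-1)^{\rank X(K)}$.

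\textbf{D\'evissage.} Rather than carry this out for a general $M$ at once, I would reduce along the exact sequences $0 \to [0 \to G] \to [X \to G] \to [X \to 0] \to 0$ and $0 \to [0 \to T] \to [0 \to G] \to [0 \to A] \to 0$. Each of the four quantities --- the $L$-function, $r_{M}$, $\chi_{W}(S, \mathcal{M}^{\Delta})$ and the power of $q$ --- is multiplicative in such sequences once one controls the failure of $G \mapsto \mathcal{G}^{0}$ and $\mathcal{X} \mapsto \mathcal{X}^{\Delta}$ to be exact: the discrepancy is concentrated on the bad fibres, is built from the skyscraper sheaves $(\mathcal{X}/\mathcal{X}^{\Delta})_{v}$ and from finite component-group terms, and it contributes exactly the summand $\sum_{v}\rank(\mathcal{X}/\mathcal{X}^{\Delta})(k(v))$ in \eqref{rm} while cancelling in $\chi_{W}$. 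This leaves three cases: $M = [0 \to A]$, which is the authors' previous theorem; $M = [0 \to T]$, where a quasi-trivial resolution of $T$ reduces everything to $\Gm$ over finite covers of $S$, making the computation explicit in terms of $\zeta$-functions, unit groups and Picard groups, and this is the step that recovers the Ono--Oesterl\'e formula; and $M = [X \to 0]$, where $\mathcal{M}^{\Delta} = (j_{*}X)[1]$ and the identity is an elementary computation with Artin $L$-functions and the Weil-\'etale cohomology of $j_{*}X$.

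\textbf{Main obstacle.} The hard part is the $p$-adic and integral bookkeeping: establishing finite generation in the $p$-part and pinning down the exact power of $q$, the $\chi(S, \Lie \mathcal{G}^{0})$ term, which requires Kato--Trihan-style flat duality and a careful identification of the de Rham realization of $\mathcal{M}^{\Delta}$ with $\Lie \mathcal{G}^{0}$; and, inside the d\'evissage, tracking the non-exactness of N\'eron models at the bad places precisely enough that the correction terms reproduce $\sum_{v}\rank(\mathcal{X}/\mathcal{X}^{\Delta})(k(v))$ and cancel in the Euler characteristic.
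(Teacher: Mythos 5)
Your d\'evissage skeleton coincides with the paper's: the theorem is reduced along $0\to T\to G\to A\to 0$ and the lattice part to three base cases, using that the non-exactness of connected N\'eron models is concentrated in skyscraper sheaves with finite stalks (Proposition \ref{prop: sequence of connected Neron for semiabelian}), the factorization $L(M,s)=L(\mathcal{X}^{\Delta},s-1)\cdot L(T,s)\cdot L(A,s)$, and additivity of $\chi(S,\Lie\mathcal{G}^{0})$ via Chai's congruence for N\'eron models of semi-abelian varieties. One caveat on your bookkeeping: the sheaf $\mathcal{X}/\mathcal{X}^{\Delta}$ does \emph{not} cancel in $\chi_{W}$ (its contribution $\prod_{v}(\deg v)^{\pm1}$ is visible in the example following Example \ref{ex: delta Neron for lattice part}); rather, both sides of the formula are deliberately expressed through $\mathcal{X}^{\Delta}$, so no cancellation is needed. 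What does cancel is the finite middle cohomology of \eqref{eq: sequence of connected Neron for semiabelian}, because constructible sheaves have trivial Weil-\'etale Euler characteristic.

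Where you genuinely diverge, and where the real gaps lie, is in the base cases. For the torus you propose a quasi-trivial resolution reducing to $\Gm$ over covers, whereas the paper uses Suzuki's N\'eron-model duality between $\mathcal{T}^{0}$ and $\tau_{\le1}Rj_{*}Y$ together with the functional equation of the Artin $L$-function. Your route stalls at exactly the point you defer: connected N\'eron models and their Lie algebras are not exact in a resolution $1\to T\to R\to Q\to1$, the discrepancy being the base-change conductor, and identifying the resulting power of $q$ with $q^{\chi(S,\Lie\mathcal{T}^{0})}$ requires the Chai--Yu congruence $f(Y)/2=-\deg\Lie\mathcal{T}^{0}$ (Proposition \ref{prop: conductor formula to torus}); Kato--Trihan-style flat duality, which you name as the key tool, plays no role in this part. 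Second, the sign $(-1)^{\rank X(K)}$ cannot be read off a degree $-1$ de Rham term: it is the sign of the leading coefficient of $L(\mathcal{X}^{\Delta},s)$ at $s=0$, and pinning it down uses positivity of the Euler product for large real $s$, the pole orders at $s=0$ and $s=1$, and the fact that $L(Y,s)$ vanishes to \emph{even} order at $s=1/2$, which in turn rests on the nontrivial identification of $H^{1}(\closure{S},\mathcal{Y}\otimes\Q_{l})$ with $H^{1}$ of an abelian variety over $k$ (Proposition \ref{prop: numerator of L of lattice is zeta of AV}). Third, the lattice case is not an ``elementary computation'': for a general $\Z$-constructible sheaf one needs Artin induction (Swan) to reduce, after controlling skyscraper and constructible pieces, to $\Z$ over finite covers. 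These are missing ideas rather than routine bookkeeping, so the proposal as written does not yet close.
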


This includes the formula for abelian varieties in \cite{GS20}
and implies a formula for tori as a special case. 
Note that the left-hand side depends on the map $X\to G$
just as the right-hand side does;
in fact, we have
	\[
			L(M, s)
		=
			L(\mathcal{X}^{\Delta}, s - 1) \cdot
			L(T, s) \cdot L(A, s),
	\]
where $L(\mathcal{X}^{\Delta}, s)$ is the $L$-function of $\mathcal{X}^{\Delta}$ defined below.
In particular, the theorem is more subtle than just 
combining formulas for abelian varieties, tori, and lattices, and we need 
a result for $\Z$-constructible sheaves $\mathcal Z$ on $S$. Let
	\[L(\mathcal{Z}, s)=
		\prod_{v}
			\det(1 - \varphi_{v} N(v)^{-s} \,|\, \mathcal{Z}_{\closure{v}} \tensor_{\Z} \Q_l)^{-1},
	\]
be the $L$-function of the $l$-adic sheaf $\mathcal{Z} \tensor_{\Z} \Q_{l}$ 
on $S$, where $\mathcal{Z}_{\closure{v}}$ is the stalk of $\mathcal{Z}$ at a 
geometric point lying over $v$.
The Weil-\'etale cohomology groups $H^*_{W}(S, \mathcal{Z})$ are finitely
generated, and we define $r_{\mathcal{Z}}$ as in 
\eqref{rm} and $\chi_{W}(S, \mathcal{Z})$ as above. 

\begin{Thm} \label{thmconstructible}
The function $L(\mathcal{Z}, s)$ has a pole of order $r_{\mathcal{Z}}$
at $s=0$, and
		$$
				\lim_{s \to 0}
					L(\mathcal{Z}, s) \cdot s^{r_{\mathcal{Z}}}
			= (-1)^{\rank \mathcal{Z}(K)} \cdot \chi_{W}(S, \mathcal{Z}) \cdot
				(\log q)^{- r_{\mathcal{Z}}}.
		$$
\end{Thm}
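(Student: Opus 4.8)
The plan is to reduce the statement to known cases by dévissage on the $\Z$-constructible sheaf $\mathcal{Z}$, using the fact that both sides are multiplicative in short exact sequences. First I would establish the multiplicativity: if $0 \to \mathcal{Z}' \to \mathcal{Z} \to \mathcal{Z}'' \to 0$ is a short exact sequence of $\Z$-constructible sheaves, then the $L$-function factors as $L(\mathcal{Z}, s) = L(\mathcal{Z}', s) \cdot L(\mathcal{Z}'', s)$ (immediate from the Euler product, since stalks and Frobenius actions are exact after $\tensor \Q_l$), the orders satisfy $r_{\mathcal{Z}} = r_{\mathcal{Z}'} + r_{\mathcal{Z}''}$ (additivity of the alternating sum over the long exact sequence of Weil-\'etale cohomology, which is a complex of finitely generated groups), the ranks of generic fibers add, and — the key compatibility — the Euler characteristics $\chi_W$ are multiplicative. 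The last point requires that the long exact cohomology sequence, viewed as an acyclic complex, be compatible with the cup-product differential by $e \in H^1_W(S, \Z)$; this is a formal consequence of $e$ being a global class, so the connecting maps commute with cup product up to sign, and the multiplicativity of Euler characteristics of acyclic complexes of complexes then gives $\chi_W(S, \mathcal{Z}) = \chi_W(S, \mathcal{Z}') \cdot \chi_W(S, \mathcal{Z}'')$.

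Next I would reduce to a short list of building blocks. Every $\Z$-constructible sheaf on $S$ sits, up to the above dévissage and up to torsion, in an exact sequence built from: (i) constructible (i.e. finite) sheaves, (ii) sheaves of the form $i_{v\ast} N$ for a closed point $v$ and a finitely generated $\Gal(k(v))$-module $N$, and (iii) sheaves of the form $j_! \mathcal{L}$ or $j_\ast \mathcal{L}$ for $j$ the inclusion of an open dense $U \subseteq S$ and $\mathcal{L}$ a lattice with continuous $\pi_1(U)$-action, which after a further finite surjective base change and dévissage one reduces to the case $\mathcal{L} = \Z$ with trivial action, i.e. $\mathcal{Z} = j_! \Z$ or $j_\ast \Z = \Z$. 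For the finite sheaves in (i), $L(\mathcal{Z}, s) = 1$, $r_{\mathcal{Z}} = 0$, $\rank \mathcal{Z}(K) = 0$, and $H^*_W(S, \mathcal{Z})$ is finite with $\chi_W = 1$ (the cup-product differential by $e$ is nilpotent on a bounded complex of finite groups, so its Euler characteristic is $1$ by the standard lemma), so the formula is the identity $1 = 1$. For skyscrapers (ii) one computes directly: $L(i_{v\ast} N, s) = \det(1 - \varphi_v N(v)^{-s} \mid N \tensor \Q_l)^{-1}$, and $H^*_W(S, i_{v\ast} N) = H^*_W(k(v), N)$, which for $k(v)$ finite is $N^{\varphi_v}$ in degree $0$ and $N_{\varphi_v}$ in degree $1$; matching the Taylor expansion of $\det(1 - \varphi_v T \mid N\tensor\Q)$ at $T = N(v)^{-s} \to 1$ against the order and leading term of this two-term complex with its $e$-differential is a direct linear-algebra computation (here $e$ acts on $H^*_W(k(v), -)$ as the cup product with the canonical generator of $H^1_W(k(v),\Z)$, which realizes exactly the comparison between $\varphi_v - 1$ acting on $N$ and on $N^\vee$). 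For the constant sheaf $\Z = j_\ast\Z$ on all of $S$, the statement is essentially the class-number formula / the $r = 0$ case already implicit in \cite{Lic09}, and for $j_!\Z$ one uses the excision triangle $j_! \Z \to \Z \to \bigoplus_{v \notin U} i_{v\ast}\Z$ together with the already-treated constant and skyscraper cases.

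The main obstacle I expect is precisely the compatibility of the cup-product differential with dévissage, i.e.\ making rigorous that the alternating product of Euler characteristics $\chi_W$ is genuinely multiplicative — this is where one must be careful that $\chi_W(S, \mathcal{Z})$ as defined depends not just on the graded pieces $H^i_W$ but on the whole complex with the $e$-action, and that a short exact sequence of sheaves induces not merely a long exact sequence of groups but an exact sequence of complexes-with-$e$-differential, with the connecting homomorphism $e$-linear up to a controllable sign. Once this is in place, everything else is bookkeeping plus the two explicit computations (finite sheaves and skyscrapers) plus citing the constant-sheaf case; and the sign $(-1)^{\rank \mathcal{Z}(K)}$ is tracked through the dévissage since $\rank\mathcal{Z}(K)$ is additive and vanishes except on the lattice pieces, where it matches the sign already appearing in the abelian-variety/torus formulas of \cite{GS20}. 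A secondary technical point is the base-change step in (iii): passing to a finite cover $S' \to S$ to trivialize $\mathcal{L}$, one must check that the formula descends, which follows from the projection formula for $L$-functions and the compatibility of Weil-\'etale cohomology and the $e$-class under pushforward along $S' \to S$.
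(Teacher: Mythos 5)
Your overall architecture (multiplicativity in short exact sequences, triviality for constructible sheaves, the skyscraper computation, descent of the formula along finite pushforwards, and the constant-sheaf case as the terminal input) matches the paper's proof step for step. But there is one genuine gap, and it sits at the single most important reduction: passing from a general lattice $\mathcal{L}$ with nontrivial $\pi_1(U)$-action to the constant sheaf $\Z$. You write that ``after a further finite surjective base change and d\'evissage one reduces to the case $\mathcal{L}=\Z$ with trivial action,'' but neither operation accomplishes this. Pulling back to a finite cover $\pi\colon U'\to U$ that trivializes $\mathcal{L}$ goes the wrong way: knowing the formula for $\pi^{\ast}\mathcal{L}$ on $U'$ does not imply it for $\mathcal{L}$ on $U$ (there is no descent statement here; the valid direction, which you and the paper both use, is that the formula for $\mathcal{Z}'$ on $S'$ implies it for $\pi_{\ast}\mathcal{Z}'$ on $S$). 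And the obvious d\'evissage candidates fail: the adjunction $\mathcal{L}\into\pi_{\ast}\pi^{\ast}\mathcal{L}$ has cokernel a lattice of rank $(\deg\pi-1)\rank\mathcal{L}$, so any induction on rank runs uphill. What actually closes this gap in the paper is the Artin induction theorem in Swan's integral form \cite{Swa60}: there is an exact sequence
\[
0\to \mathcal{Z}_{K}^{n}\oplus\bigoplus_{i}\pi_{K'_{i}/K,\ast}\Z\to\bigoplus_{j}\pi_{K''_{j}/K,\ast}\Z\to N_{K}\to 0
\]
with $N_{K}$ finite and $n\ge 1$, which after spreading out expresses $n$ copies of $\mathcal{Z}$ (not $\mathcal{Z}$ itself) in terms of pushforwards of constant sheaves and finite sheaves. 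This forces a further step absent from your plan: one must check that the formula for $\mathcal{Z}^{n}$ implies it for $\mathcal{Z}$, which the paper does by taking $n$-th roots of the (positive real) absolute values of both sides, deferring the determination of the sign to Proposition \ref{prop: sign of L value for Z constructible sheaf at zero}.

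Two smaller remarks. First, the compatibility of the $e$-differential with the long exact sequence, which you flag as your main expected obstacle, is indeed the right thing to worry about but is standard (the long exact sequence is a complex of modules over $\Z[e]/(e^{2})$ because $e$ is a global class, and Euler characteristics of bounded acyclic complexes of such are multiplicative); the paper takes it for granted. Second, your claim that the sign $(-1)^{\rank\mathcal{Z}(K)}$ can be ``tracked through the d\'evissage'' is optimistic: the $n$-th-root step destroys sign information, which is why the paper determines the sign separately by an analytic argument (location of zeros and poles of $L(\mathcal{Z}_{K},s)$ on the positive real axis via the functional equation and the evenness of the order of vanishing at $s=1/2$), not by bookkeeping through the induction.
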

Similar formulas in the number field case were given
by Tran \cite{Tra15}, \cite{Tra16}. The proof uses Artin's induction 
theorem to reduce to the case of $\mathcal{Z}=\Z$. 
To prove Theorem \ref{mainthm} for $M=T$ a torus, we apply 
Theorem \ref{thmconstructible} to $j_*Y$, where $Y=\Hom(T,\Gm)$ is the character module of $T$,
and use duality for Weil-\'etale cohomology of \cite{Gei12} as well as
the functional equation
	\[
			L(T, 1 - s)
		=
			q^{- \chi(S, \Lie(\mathcal{T}^{0})) (2 s - 1)} \cdot
			L(j_*Y, s).
	\]
The proof of Theorem \ref{mainthm} is completed
by combining the cases of constructible sheaves, tori, and abelian 
varieties.

As a by-product, we are able to express the Ono-Oesterl\'e Tamagawa number 
$ \tau(T) $ of a torus in terms of global invariants:
	\[
			\tau(T)
		=
			\frac{
				\# \Cl(\mathcal{T}^{0})_{\tor} \cdot q^{\chi(S, \Lie \mathcal{T}^{0})}
			}{
				\# \mathcal{T}^{0}(S) \cdot \rho(T) \cdot (\log q)^{\rank Y(K)} \cdot \Disc(h_{T})
			}
	\]
and reprove the Tamagawa number formula of 
Ono \cite{Ono63} and Oesterl\'e  \cite{Oes84}
$$\tau(T)=\frac{\# H^{1}(K, Y)}{\# \Sha(T)}.$$
Here $\rho(T)$ is the value in Theorem \ref{mainthm} for $M=T$,  
	\[\Cl(\mathcal{T}^{0})= 
			\frac{T(\Adele_{K})}{T(K) + \mathcal{T}^{0}(\Order_{\Adele_{K}})}
		\cong
			\frac{\bigoplus_{v} \pi_{0}(\mathcal{T}_{v})(k(v))}{T(K)},
	\]
and $h_T$ is the pairing
	\[
			h_{T}
		\colon
			\Cl(\mathcal{T}^{0}) \times Y(K)
		\to
			\Cl(\Gm)
		=
			\Pic(S)
		\stackrel{\deg}{\to}
			\Z.
	\]

%Here are some remarks.
The object $\mathcal{M}^{\Delta}$ is functorial in $M$.
It is closely related to, but different from,
the N\'eron model $\mathcal{M}$ of $M$ in the sense of \cite{Suz19}, whose 
cohomology groups $H^{i}_{W}(S, \mathcal{M})$ are not finitely generated in general.
We are planing to discuss the duality of $\mathcal{M}^{\Delta}$ and its 
Weil-\'etale cohomology,
as well as their relations to the functional equation for $L(M, s)$,
in a forthcoming paper.

It would be desirable to unify Theorems \ref{mainthm} and \ref{thmconstructible}
in terms of ``constructible $1$-motives'' and their $L$-functions.
%and showing their special value formulas. 
As a first step, 
Pepin Lehalleur \cite{PL19} defined constructible $1$-motives with $\Q$-coefficients,
but one would need to define a refinement with $\Z$-coefficients
in order to formulate a special value formula, and this is especially difficult 
for the $p$-integral structure. 

After the first version of this paper was uploaded to the arXiv,
A.~Morin \cite{Mor22} gave a number field version of 
Theorem \ref{thmconstructible}, improving on Tran's work.

\begin{Notation}
	Throughout the paper $k = \F_{q}$ is a finite field of characteristic 
	$p$ and $S$ a proper, smooth, and geometrically connected curve over $k$
	of genus $g$ with function field $K$.
	For a place $v$ of $K$ (or a closed point of $S$),
	we denote the completed, henselian, and strict henselian local ring of 
	$S$ at $v$ by $\Order_{v}, \Order_{v}^{h}$, and $\Order_{v}^{sh}$,
	and their fraction field by $K_{v}, K_{v}^{h}$, and $K_{v}^{sh}$,
	respectively. 
	Denote the residue field of $\Order_{v}$ by $k(v)$,
	the degree of $v$ by $\deg(v) = [k(v) : k]$, and 
	$N(v) = \# k(v) = q^{\deg(v)}$.
	The adele ring of $K$ is denoted by $\Adele_{K}$
	and its subring of integral adeles by $\Order_{\Adele_{K}}$.
	
	For an abelian group $G$, denote its torsion part by $G_{\tor}$
	and its torsion-free quotient by $G / \tor$.
	If we have a pairing $\varphi \colon G \times H \to \Z$
	between finitely generated abelian groups,
	then the discriminant of $\varphi / \tor \colon G / \tor \times H / \tor \to \Z$
	is denoted by $\Disc(\varphi)$.
	A lattice over a field is a finitely generated free abelian group
	equipped with a continuous action of the absolute Galois group of the 
	field (which necessarily factors through a finite group).
	
	The N\'eron models we consider are N\'eron lft (locally finite type) models
	in the terminology of \cite[Chapter 10]{BLR90}.
	The connected N\'eron model means the part of the N\'eron model with connected fibers,
	usually denoted by $\mathcal{G}^{0}$ if $\mathcal{G}$ denotes the N\'eron model.
	For a group scheme $G$ locally of finite type over a field,
	we denote the \'etale group scheme of connected components of $G$ by $\pi_{0}(G)$
	(\cite[Chapter II, \S 5, No.\ 1, Proposition 1]{DG70}).
\end{Notation}

\begin{Ack}
	The authors are grateful to Kazuya Kato, Stephen Lichtenbaum and Takeshi Saito
	for helpful discussions.
\end{Ack}

%%%%%%%%%%%%%%%%%%%%%%%%%%%%%%%%%%%%%%%%%%%%%%%%%%%%%%%%%%%%%%%%%%%%%%%%%%%%%

\section{Weil etale cohomology of tori and lattices}
\label{sec: Weil etale cohomology of tori and lattices}

We recall the Weil-\'etale cohomology groups \cite{Lic05}, \cite{Gei04}
of varieties over a finite field.
See also \cite[Section 5]{GS20} for another survey.

Let $\closure{k} = \closure{\F}_{q}$ be an algebraic closure of $k = \F_{q}$.
Denote the $q$-th power (arithmetic) Frobenius map by
$\phi \in \Gal(\closure{k} / k)$.
Denote the Weil group of $k$ by
$W = \langle \phi \rangle \subset \Gal(\closure{k} / k)$
and the category of $W$-modules by $\Mod_{W}$.
Let $X$ be a proper smooth variety over $k$
and $\closure{X}$ be 
the base change of $X$ to $\closure{k}$.
Denote the category of sheaves of abelian groups on the small \'etale site $X_{\et}$
by $\Ab(X_{\et})$ and its bounded derived category by $D^b(X_{\et})$.
For a sheaf $\mathcal{F} \in \Ab(X_{\et})$,
we denote its inverse image to $\closure{X}_{\et}$ by $\closure{\mathcal{F}}$.
Hence we have a left exact functor
$$\Ab(X_{\et}) \to \Mod_{W},\quad  \mathcal{F} \mapsto \Gamma(\closure{X}, \closure{\mathcal{F}})$$
and its right derived functor
$$D^b(X_{\et}) \to D^b(\Mod_{W}),\quad 
\mathcal{F}^{\cdot} \mapsto R \Gamma(\closure{X}, 
\closure{\mathcal{F}}^{\cdot}).$$
The group $\Gamma(\closure{X}, \closure{\mathcal{F}})$ has a natural action 
of $\phi$.
Composing it with the group cohomology functor $R \Gamma(W, \;\cdot\;)$,
we obtain a triangulated functor 
	\begin{align*}
			R \Gamma_{W}(X, \;\cdot\;)
		\colon
			D^b(X_{\et})
		&\to
			D^b(\Ab),\\		
			\mathcal{F}^{\cdot}
		&\mapsto
			R \Gamma(W, R \Gamma(\closure{X}, \closure{\mathcal{F}}^{\cdot}).
	\end{align*}
and denote the $n$-th cohomology of $R \Gamma_{W}(X, \;\cdot\;)$
by $H_{W}^{n}(X, \;\cdot\;)$.

Let $e \in H_{W}^{1}(k, \Z) \cong \Hom(W, \Z)$ be the homomorphism
sending $\phi$ to $1$.
For any $\mathcal{F}^{\cdot} \in D^b(X_{\et})$,
the cup product with $e$ defines a homomorphism
$e \colon H_{W}^{n}(X, \mathcal{F}^{\cdot}) \to H_{W}^{n + 1}(X, \mathcal{F}^{\cdot})$.
Since $e \cup e = 0$, we obtain a complex
$(H_{W}^{\ast}(X, \mathcal{F}^{\cdot}), e)$.
It is exact after tensoring with $\Q$ by \cite[Corollary 5.2]{Gei04}, 
hence the cohomology groups of this complex are torsion.
If the groups $H_{W}^{n}(X, \mathcal{F}^{\cdot})$ are finitely generated for all $n$
and zero for almost all $n$,
then the Euler characteristic of the complex 
$(H_{W}^{\ast}(X, \mathcal{F}^{\cdot}), e)$ is thus well-defined.
We denote this Euler characteristic by $\chi_{W}(X, \mathcal{F}^{\cdot})$:
	\[
			\chi_{W}(X, \mathcal{F}^{\cdot})
		=
			\chi(H_{W}^{\ast}(X, \mathcal{F}^{\cdot}), e)
		=
			\prod_{n}
				\bigl(
					\# H^{n}(H_{W}^{\ast}(X, \mathcal{F}^{\cdot}), e)
				\bigr)^{(-1)^{n}}.
	\]
Weil-\'etale cohomology does not depend on the choice of $k$:

\begin{Prop} \label{prop: independence of the constant field of Weil etale cohom}
	Let $k' / k$ be a finite field contained in the field of constants of $X$,
	and let $W$ and $W'$ be the Weil groups of $k$ and  $k'$, respectively.
	Then the natural morphism
		\[
				R \Gamma_{W}(X, \;\cdot\;)
			\to
				R \Gamma_{W'}(X, \;\cdot\;)
		\]
	of functors $D^b(X_{\et}) \to D^b(\Ab)$ is an isomorphism.
\end{Prop}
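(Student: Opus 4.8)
The plan is to reduce the statement to Shapiro's lemma for the finite-index inclusion $W' \subseteq W$. Write $k' = \F_{q^{d}}$, so that $W' = \langle \phi^{d} \rangle$ sits in $W = \langle \phi \rangle$ with index $d$, and $\closure{k}$ is at once an algebraic closure of $k$ and of $k'$. Since $k'$ lies in the field of constants of $X$, the structure morphism $X \to \Spec k$ factors through $\Spec k'$, so $\closure{X}_{k'} := X \times_{k'} \closure{k}$ is defined and $R \Gamma_{W'}(X, \;\cdot\;) = R \Gamma(W', \;\cdot\;) \compose R \Gamma(\closure{X}_{k'}, \closure{(\;\cdot\;)})$, in perfect analogy with $R \Gamma_{W}(X, \;\cdot\;) = R \Gamma(W, \;\cdot\;) \compose R \Gamma(\closure{X}_{k}, \closure{(\;\cdot\;)})$, where $\closure{X}_{k} := X \times_{k} \closure{k}$ and $\closure{(\;\cdot\;)}$ denotes inverse image to the respective small étale site. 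It thus suffices to compare these two composites of functors $D^b(X_{\et}) \to D^b(\Ab)$.

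The key geometric input is that $k' \tensor_{k} \closure{k} \cong \closure{k}^{d}$, with $\phi = \id \tensor \phi$ permuting the $d$ factors cyclically, since $k'/k$ is separable and $\Gal(\closure{k}/k)$ permutes the $d$ embeddings $k' \into \closure{k}$ simply transitively. Hence $\closure{X}_{k} \cong \coprod_{i = 0}^{d - 1} \closure{X}_{k'}^{(i)}$, with $\closure{X}_{k'}^{(i)} = X \times_{k', \phi^{i} \compose \iota} \closure{k}$ for a fixed embedding $\iota \colon k' \into \closure{k}$, where $\phi \in W$ cyclically permutes the components, $\phi^{d}$ stabilizes each of them, and $\phi^{d}$ acts on $\closure{X}_{k'} = \closure{X}_{k'}^{(0)}$ as the $q^{d}$-power Frobenius, i.e. as the canonical generator of $W'$. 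Because (hyper)cohomology of étale sheaves carries finite disjoint unions to finite products, this gives, for every $\mathcal{F}^{\cdot} \in D^b(X_{\et})$, a canonical isomorphism in $D^b(\Mod_{W})$
	\[
			R \Gamma(\closure{X}_{k}, \closure{\mathcal{F}}^{\cdot}_{k})
		\cong
			\mathrm{Ind}_{W'}^{W}\, R \Gamma(\closure{X}_{k'}, \closure{\mathcal{F}}^{\cdot}_{k'})
	\]
($\closure{\mathcal{F}}^{\cdot}_{k}$, $\closure{\mathcal{F}}^{\cdot}_{k'}$ being the inverse images of $\mathcal{F}^{\cdot}$), where $\mathrm{Ind}_{W'}^{W}$ is induction — equivalently, as $[W:W'] < \infty$, coinduction — of the $W'$-complex on the right.

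It remains to apply Shapiro's lemma, $R \Gamma(W, \mathrm{Ind}_{W'}^{W} C) \cong R \Gamma(W', C)$ for $C \in D^b(\Mod_{W'})$. As $\mathrm{cd}(W) = \mathrm{cd}(W') = 1$, one may also see this completely explicitly: $R \Gamma(W, N)$ is represented by $[\, N \xrightarrow{\phi - 1} N \,]$ and $R \Gamma(W', M)$ by $[\, M \xrightarrow{\phi^{d} - 1} M \,]$, and for $N = M^{\oplus d}$ with the twisted cyclic shift above there is a visible quasi-isomorphism between these two-term complexes. Composing with the isomorphism of the second paragraph yields $R \Gamma_{W}(X, \mathcal{F}^{\cdot}) \isomto R \Gamma_{W'}(X, \mathcal{F}^{\cdot})$, and the last point is to check that this agrees with the morphism in the statement, which is restriction in group cohomology along $W' \into W$ followed by restricting sections over $\closure{X}_{k}$ to the component $\closure{X}_{k'}$ — this is exactly the standard description of the Shapiro isomorphism as restriction composed with the counit $\mathrm{Res}\, \mathrm{Coind}_{W'}^{W} \to \id$. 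Naturality in $\mathcal{F}^{\cdot}$ and independence of $\iota$ and of the ordering of the components follow at once.

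The main obstacle is the descent identification of the second paragraph: pinning down exactly how $\phi$ permutes the connected components of $\closure{X}_{k}$, checking that $\phi^{d}$ restricts to the Frobenius of $k'$ on a chosen component, and doing this compatibly with passage to bounded complexes of étale sheaves, so that the displayed isomorphism is genuinely one in $D^b(\Mod_{W})$. Everything after that — Shapiro's lemma and the identification with the natural morphism — is formal.
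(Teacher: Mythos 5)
Your proof is correct and follows essentially the same route as the paper: the paper's one-line argument identifies $R \Gamma(X \times_{k} \closure{k}, \;\cdot\;)$ with $R \Gamma(X \times_{k'} \closure{k'}, \;\cdot\;) \tensor_{\Z[W']}^{L} \Z[W]$, which is exactly your induction isomorphism $\mathrm{Ind}_{W'}^{W}$ arising from the component decomposition of $\closure{X}_{k}$, and then concludes by Shapiro's lemma. You have merely spelled out the details (the cyclic permutation of components, the explicit two-term complexes, and the compatibility with the natural morphism) that the paper leaves implicit.
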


\begin{proof}
	Fix an algebraic closure $\closure{k'} = \closure{k}$ of $k'$ (or $k$).
	Then $R \Gamma(X \times_{k} \closure{k}, \;\cdot\;)$
	is canonically isomorphic to
	$R \Gamma(X \times_{k'} \closure{k'}, \;\cdot\;) \tensor_{\Z[W']}^{L} \Z[W]$
	as functors $D^b(X_{\et}) \to D^b(\Mod_{W})$.
	This implies the result.
\end{proof}

Note however that the cup product with a generator of $H_{W}^{1}(k, \Z)$
and a generator of $H_{W'}^{1}(k', \Z)$ on these isomorphic functors 
differ by a factor of $[k':k]$.

From now on we assume that the base is a smooth, proper, and 
geometrically connected curve $S$ over $k$.
Let $K$ be the function field of $S$,  
$T / K$ be a torus and $Y=\Hom_{\bar K}(T,\Gm)$ its character lattice.
Let $\mathcal{T}$ and\ $\mathcal{T}^{0}$ be the 
N\'eron and connected N\'eron models over $S$, respectively.
Let $\mathcal{Y} = j_{\ast} Y$ and 
$\Tilde{\mathcal{Y}} = \tau_{\le 1} R j_{\ast} Y$,
where $j\colon \Spec K \into S$ is the inclusion
and $\tau_{\le 1}$ is the truncation functor in degrees $\le 1$
(in the cohomological grading).

By \cite[Definition 4.8]{Suz19}, the natural pairing 
$T \times Y \to \Gm$ over $K$ canonically extends to a morphism
\begin{equation}\label{ppaaiirr}
			\mathcal{T}^{0} \tensor^{L} \Tilde{\mathcal{Y}}
		\to
			\Gm
\end{equation}
in $D^b(S_{\et})$.
Denote the sheaf-Hom functor for $S_{\et}$ by
$\sheafhom_{S_{\et}}$.

\begin{Prop}\label{thesame}
	The induced morphism
		\[
				\mathcal{T}^{0}
			\to
				R \sheafhom_{S_{\et}}(\Tilde{\mathcal{Y}}, \Gm)
		\]
	in $D^b(S_{\et})$ is an isomorphism.
\end{Prop}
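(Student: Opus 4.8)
The plan is to verify the claimed isomorphism locally on $S_{\et}$, distinguishing the generic point from the closed points, and in the latter case reducing to the structure theory of Néron models of tori over a henselian discrete valuation ring. Since being an isomorphism in $D^b(S_{\et})$ is a local (stalk-wise) condition, it suffices to check that the map $\mathcal{T}^0 \to R\sheafhom_{S_{\et}}(\Tilde{\mathcal{Y}}, \Gm)$ induces an isomorphism on stalks at every geometric point $\bar s$ of $S$. Over the generic point, $\Tilde{\mathcal{Y}}$ restricts to $Y$ (in degree $0$), $\mathcal{T}^0$ restricts to $T$, and the claim becomes the tautology that $T \isomto R\sheafhom_{K_{\et}}(Y, \Gm) = \sheafhom_{K_{\et}}(Y, \Gm)$, which is the defining adjunction between a torus and its character lattice (here $R^i\sheafhom$ vanishes for $i>0$ because $Y$ is, étale-locally on $K$, a finite free $\Z$-module).

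At a closed point $v$ with strict henselization $\Order_v^{sh}$, I would pass to the henselian (or strictly henselian) local picture, where $j$ becomes the inclusion of the generic point $\Spec K_v^{sh} \into \Spec \Order_v^{sh}$. The key input here is the explicit description of $\Tilde{\mathcal{Y}} = \tau_{\le 1}Rj_*Y$ over a henselian trait: $R^0j_*Y = Y^{I_v}$ is the lattice of inertia invariants, and $R^1j_*Y$ is supported on the closed point and computes $H^1(I_v, Y)$, which is finite (in fact its prime-to-$p$ part is $H^1$ of the tame quotient, and we keep only $\tau_{\le 1}$). On the other side, $\mathcal{T}^0$ over $\Order_v^{sh}$ is the connected Néron model, and I would invoke the computation of $R\sheafhom$ against $Rj_*Y$ that underlies \cite[Definition 4.8]{Suz19} — namely that the pairing \eqref{ppaaiirr} was constructed precisely so that the connected Néron model is dual to $\Tilde{\mathcal{Y}}$. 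Concretely, one checks that $\sheafhom(\Tilde{\mathcal{Y}}, \Gm)$ reproduces $\mathcal{T}^0$ and that the higher $\sheafext$-sheaves vanish, using that $\Gm$ over the trait has $R^1j_*\Gm = 0$ (units are a sheaf for which the generic fiber extends) together with the duality between the component group / inertia-invariant lattice and the torsion in $R j_*Y$.

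The main obstacle — and the step requiring genuine work rather than formal nonsense — is the vanishing of the higher sheaf-$\Ext$ groups $\sheafext^i_{S_{\et}}(\Tilde{\mathcal{Y}}, \Gm)$ for $i \ge 1$ at the closed points, i.e.\ showing the target really is concentrated in degree $0$ and equals $\mathcal{T}^0$ on the nose (not just up to the component group). This is where the truncation $\tau_{\le 1}$ in the definition of $\Tilde{\mathcal{Y}}$ is essential: including $R^2 j_* Y$ and higher would obstruct the duality, whereas truncating makes the local $\Ext$-spectral sequence degenerate into the statement that $\mathcal{T}^0$ is exactly $R\sheafhom(\Tilde{\mathcal{Y}},\Gm)$. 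I expect the cleanest route is to cite the local duality computation from \cite{Suz19} directly — the morphism \eqref{ppaaiirr} was engineered there for exactly this purpose — and then note that a morphism in $D^b(S_{\et})$ which is an isomorphism on the generic stalk and on every closed-point stalk is an isomorphism. A small amount of care is needed at $p$ (the residue characteristic): the wild part of $H^1(I_v, Y)$ and the fact that $\mathcal{T}^0$ may be non-smooth / have interesting $p$-torsion must be handled, but this too is subsumed in the construction of \eqref{ppaaiirr}.
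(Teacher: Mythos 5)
Your proposal follows the same route as the paper's proof: reduce to a stalk-wise check, where the generic point is the classical duality between $T$ and $Y$ and the closed points reduce to the local N\'eron-model duality of \cite{Suz19}. The only difference is one of precision: the paper assembles the closed-point case from the distinguished triangle $\mathcal{T}^{0}(\Order_{v}^{sh}) \to T(K_{v}^{sh}) \to \pi_{0}(\mathcal{T}_{v})(\closure{k(v)})$ of \cite[Proposition 4.14]{Suz19} together with the component-group duality of \cite[Theorem B (5)]{Suz19} and the sequence $0 \to \Gm \to \mathcal{G}_{m} \to i_{\ast}\Z \to 0$, rather than by a single citation, but the ingredients you name (vanishing of higher direct images of $\Gm$, duality between the component group and the truncated $R\Gamma(K_v^{sh},Y)$) are exactly these.
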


\begin{proof}
We can check this at stalks. The morphism pulled back to $K_{\et}$ is
nothing but the duality between $T$ and $Y$.
Hence it is enough to show that for any place $v \in S$, the induced morphism
		\[
				\mathcal{T}^{0}(\Order_{v}^{sh})
			\to
				R \Hom_{\Order_{v, \et}^{sh}}(\Tilde{\mathcal{Y}}, \Gm)
		\]
in $D^b(\Ab)$ is an isomorphism.
Denote by $j \colon \Spec K_{v}^{sh} \into \Spec \Order_{v}^{sh}$
	and $i \colon \Spec \closure{k(v)} \into \Spec \Order_{v}^{sh}$
	the inclusions.
	Set $\mathcal{Y}^{0} = j_{!} Y$
	and $\Tilde{\mathcal{Y}}_{\closure{v}} = i^{\ast} \Tilde{\mathcal{Y}} = \tau_{\le 1} R \Gamma(K_{v}^{sh}, Y)$.
	By \cite[Proposition 4.14]{Suz19},
	we have a canonical morphism of distinguished triangles
		\[
			\begin{CD}
					\mathcal{T}^{0}(\Order_{v}^{sh})
				@>>>
					T(K_{v}^{sh})
				@>>>
					\pi_{0}(\mathcal{T}_{v})(\closure{k(v)})
				\\ @VVV @VVV @VVV \\
					R \Hom_{\Order_{v, \et}^{sh}}(\Tilde{\mathcal{Y}}, \Gm)
				@>>>
					R \Hom_{\Order_{v, \et}^{sh}}(\mathcal{Y}^{0}, \Gm)
				@>>>
					R \Hom_{\Order_{v, \et}^{sh}}(i_{\ast} \Tilde{\mathcal{Y}}_{\closure{v}}, \Gm)[1].
			\end{CD}
		\]
(There is actually a shifted term $\mathcal{T}^{0}(\Order_{v}^{sh})[1]$
next to $\pi_{0}(\mathcal{T}_{v})(\closure{k(v)})$,
a similar term for the lower row
and another commutative square next to the right square.)
By the adjunction 
and the duality between $T$ and $Y$, the middle map can be identified
with the isomorphism
		\[
		T(K_{v}^{sh})
		\cong
		R \Gamma(K_{v}^{sh}, T)
		\cong
				R \Hom_{K_{v, \et}^{sh}}(Y, \Gm)
		\]
since $H^{n}(K_{v}^{sh}, T) = 0$ for $n \ge 1$ by
\cite[Chapter X, Section 7, ``Application'']{Ser79}.
For the right vertical morphism, we use the exact sequence
		\[
			0 \to \Gm \to \mathcal{G}_{m} \to i_{\ast} \Z \to 0
		\]
	in $\Ab(\Order_{v}^{sh})$, 
where $\mathcal{G}_{m}$ is the N\'eron model of $\Gm$.
Since $H^{n}(K_{v}^{sh}, \Gm) = 0$  for $n \ge 1$, we have 
$R j_{\ast} \Gm \cong \mathcal{G}_{m}$, hence
		\[
				R \Hom_{\Order_{v, \et}^{sh}}(i_{\ast} \Tilde{\mathcal{Y}}_{\closure{v}}, \mathcal{G}_{m})
			\cong
				R \Hom_{K_{v, \et}^{sh}}(j^{\ast} i_{\ast} \Tilde{\mathcal{Y}}_{\closure{v}}, \Gm)
			=
				0.
		\]
	Therefore
		\begin{align*}
					R \Hom_{\Order_{v, \et}^{sh}}(i_{\ast} \Tilde{\mathcal{Y}}_{\closure{v}}, \Gm)[1]
			&	\isomfrom
					R \Hom_{\Order_{v, \et}^{sh}}(i_{\ast} \Tilde{\mathcal{Y}}_{\closure{v}}, i_{\ast} \Z)
			\\
			&	\cong
					R \Hom_{\Ab}(\Tilde{\mathcal{Y}}_{\closure{v}}, \Z)
			\\
			&	=
					R \Hom_{\Ab}(\tau_{\le 1} R \Gamma(K_{v}^{sh}, Y), \Z).
		\end{align*}
	Therefore the right vertical morphism in the above diagram is
		\[
				\pi_{0}(\mathcal{T}_{v})(\closure{k(v)})
			\to
				R \Hom_{\Ab}(\tau_{\le 1} R \Gamma(K_{v}^{sh}, Y), \Z).
		\]
This is an isomorphism by \cite[Theorem B (5)]{Suz19}.
Therefore the left vertical morphism in the above diagram is also 
an isomorphism.
\end{proof}

\begin{Thm}\label{thm: global duality for tori}
\label{prop: cohom of torus is finitely generated and bounded}
The groups $H_{W}^{n}(S, \mathcal{T}^{0})$ as well as the group 
$H_{W}^{n}(S, \Tilde{\mathcal{Y}})$ are finitely 
generated for all $n$, and zero for $n \ne 0, 1, 2, 3$.
Moreover, the pairing 	
\[
			R \Gamma_{W}(S, \mathcal{T}^{0}) \tensor^{L} 
			R \Gamma_{W}(S, \Tilde{\mathcal{Y}})
		\to
			R \Gamma_{W}(S, \Gm)
		\to
			\Z[-2]
	\]
induced by \ref{ppaaiirr} is perfect.
\end{Thm}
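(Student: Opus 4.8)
The plan is to derive the whole statement from Proposition~\ref{thesame} together with the Artin--Verdier type duality for Weil-\'etale cohomology of $S$ with $\Gm$-coefficients of \cite{Gei12}.

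First I would observe that $\Tilde{\mathcal Y}=\tau_{\le1}Rj_\ast Y$ is a bounded complex of $\Z$-constructible sheaves: the truncation triangle
\[
 j_\ast Y\longrightarrow\Tilde{\mathcal Y}\longrightarrow(R^1j_\ast Y)[-1]\longrightarrow(j_\ast Y)[1]
\]
has cohomology sheaves the $\Z$-constructible sheaf $\mathcal Y=j_\ast Y$ in degree $0$ and the sheaf $R^1j_\ast Y$ in degree $1$, and the latter is a skyscraper sheaf supported on the finite ramification locus of $Y$, with finite stalks $H^1(K_v^{sh},Y)$. The Weil-\'etale cohomology of a $\Z$-constructible sheaf on $S$ is finitely generated in every degree and vanishes outside degrees $0,1,2,3$, the vanishing in degree $\ge4$ coming from the fact that the degree-$3$ \'etale cohomology of the pullback to $\closure S$ is divisible and $\phi-1$ acts surjectively on it, so that its Frobenius-coinvariants---the only possible contribution to $H^4_W$---vanish; this is \cite{Gei12} (see also \cite{GS20}). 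Since the Weil-\'etale cohomology of a skyscraper sheaf with finitely generated stalks is finitely generated and concentrated in two consecutive degrees, the triangle above yields finite generation of $H^\ast_W(S,\Tilde{\mathcal Y})$ and its vanishing outside $\{0,1,2,3\}$. For $\mathcal T^0$ the same follows either from the perfect pairing below, or directly from the exact sequence $0\to\mathcal T^0\to\mathcal T\to\mathcal Q\to0$, where $\mathcal Q$ is the $\Z$-constructible skyscraper sheaf of component groups, using $Rj_\ast T=\mathcal T$ (because $H^i(K_v^{sh},T)=0$ for $i\ge1$) and that $R\Gamma_W(S,Rj_\ast T)=R\Gamma(W,R\Gamma(\Spec\closure K,\closure T))$ is concentrated in degrees $0,1$ as $\closure K$ has cohomological dimension $\le1$ and $H^1(\closure K,\closure T)=0$.

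For the perfectness of the pairing I would invoke the duality theorem of \cite{Gei12}: for a bounded complex $\mathcal F$ of $\Z$-constructible sheaves on $S$, the cup-product pairing
\[
 R\Gamma_W(S,\mathcal F)\tensor^L R\Gamma_W\bigl(S,R\sheafhom_{S_\et}(\mathcal F,\Gm)\bigr)\to R\Gamma_W(S,\Gm)\to\Z[-2]
\]
is a perfect pairing of perfect complexes of abelian groups, where the last map is the trace isomorphism $H^2_W(S,\Gm)\cong\Z$; equivalently, it identifies $R\Gamma_W(S,R\sheafhom_{S_\et}(\mathcal F,\Gm))$ with $R\Hom_\Z(R\Gamma_W(S,\mathcal F),\Z)[-2]$. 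Applying this to $\mathcal F=\Tilde{\mathcal Y}$ and using Proposition~\ref{thesame} to identify $R\sheafhom_{S_\et}(\Tilde{\mathcal Y},\Gm)$ with $\mathcal T^0$ gives precisely the pairing in the statement; the only point to verify is that under this identification the evaluation pairing $R\sheafhom_{S_\et}(\Tilde{\mathcal Y},\Gm)\tensor^L\Tilde{\mathcal Y}\to\Gm$ corresponds to \eqref{ppaaiirr}, which is exactly the content of Proposition~\ref{thesame}. One also reads off the vanishing range and finite generation for $\mathcal T^0$ from this, since $R\Gamma_W(S,\Tilde{\mathcal Y})$ is then a perfect complex of abelian groups concentrated in degrees $0,1,2,3$.

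I expect the main obstacle to be having the duality of \cite{Gei12} available in exactly the form needed---for bounded complexes, rather than single sheaves, of $\Z$-constructible sheaves with $\Gm$-coefficients in the Weil-\'etale setting, and with the trace landing in $\Z[-2]$. If it is only stated for a single $\Z$-constructible sheaf, one d\'evissages $\Tilde{\mathcal Y}$ along the two-term triangle above, pairs it with the triangle obtained by applying $R\sheafhom_{S_\et}(\,\cdot\,,\Gm)$, and checks that the three resulting pairings are compatible with the connecting morphisms; this is routine but requires care with the shifts and signs. A secondary technical point, already subsumed above, is the divisibility of the degree-$3$ \'etale cohomology of torsion-free $\Z$-constructible sheaves on $\closure S$.
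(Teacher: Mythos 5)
Your proposal is correct and takes essentially the same route as the paper: the paper's proof consists precisely of citing the finite generation and duality results of \cite{Gei12} for $\Z$-constructible sheaves and transporting them to $\mathcal{T}^{0}$ via the identification $\mathcal{T}^{0}\cong R\sheafhom_{S_{\et}}(\Tilde{\mathcal{Y}},\Gm)$ of Proposition~\ref{thesame}. Your d\'evissage of $\Tilde{\mathcal{Y}}$ along the truncation triangle and the discussion of extending \cite{Gei12} to two-term complexes merely make explicit what the paper leaves implicit.
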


\begin{proof}
This follows from 
\cite[Proposition 2.6, Theorem 4.2 and Corollary 4.3]{Gei12} because the 
cohomology groups and the pairing agrees with the one 
in \cite{Gei12} by Proposition \ref{thesame}.
\end{proof}

\begin{Cor} \label{cor: global duality for tori, each term}
We have perfect pairings
		\[
					H_{W}^{n}(S, \mathcal{T}^{0}) / \tor
				\times
					H_{W}^{2 - n}(S, \Tilde{\mathcal{Y}}) / \tor
			\to
				\Z
		\]
of finitely generated free abelian groups as well as perfect pairings
		\[
					H_{W}^{n}(U, \mathcal{T}^{0})_{\tor}
				\times
					H_{W}^{3 - n}(U, \Tilde{\mathcal{Y}})_{\tor}
			\to
				\Q / \Z
		\]
of finite abelian groups.
\end{Cor}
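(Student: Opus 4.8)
The plan is to extract both families of term-by-term pairings from the single perfect pairing of complexes furnished by Theorem \ref{thm: global duality for tori}, by passing it through the universal coefficient theorem for $R\Hom_{\Ab}(-,\Z)$. Perfectness of that pairing means precisely that the induced map
\[
	R\Gamma_{W}(S,\mathcal{T}^{0}) \isomto R\Hom_{\Ab}\bigl(R\Gamma_{W}(S,\Tilde{\mathcal{Y}}),\Z[-2]\bigr)
\]
is an isomorphism in $D^{b}(\Ab)$, and symmetrically with the roles of $\mathcal{T}^{0}$ and $\Tilde{\mathcal{Y}}$ exchanged. Since $\Z$ has injective dimension $1$ over itself, the hyper-Ext spectral sequence $\Ext^{p}(H^{-q}(-),\Z) \Rightarrow H^{p+q}(R\Hom_{\Ab}(-,\Z))$ collapses to short exact sequences carrying only a $\Hom$-term and an $\Ext^{1}$-term, and it is the separation of these two terms that will produce the free pairing and the torsion pairing respectively.

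First I would take cohomology of the displayed isomorphism. Combining it with the shift $[-2]$ and the universal coefficient sequence yields, for each $n$,
\[
	0 \to \Ext^{1}\bigl(H_{W}^{3-n}(S,\Tilde{\mathcal{Y}}),\Z\bigr) \to H_{W}^{n}(S,\mathcal{T}^{0}) \to \Hom\bigl(H_{W}^{2-n}(S,\Tilde{\mathcal{Y}}),\Z\bigr) \to 0.
\]
Because all groups in sight are finitely generated by Theorem \ref{thm: global duality for tori}, the left $\Ext^{1}$-term is finite and the right $\Hom$-term is free; hence this sequence is exactly the splitting of $H_{W}^{n}(S,\mathcal{T}^{0})$ into its torsion subgroup and its torsion-free quotient. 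Reading off the quotient gives
\[
	H_{W}^{n}(S,\mathcal{T}^{0})/\tor \isomto \Hom\bigl(H_{W}^{2-n}(S,\Tilde{\mathcal{Y}})/\tor,\Z\bigr),
\]
which is the first asserted pairing; applying the same to the symmetric isomorphism gives the transpose identification $H_{W}^{2-n}(S,\Tilde{\mathcal{Y}})/\tor \isomto \Hom(H_{W}^{n}(S,\mathcal{T}^{0})/\tor,\Z)$, so the $\Z$-valued pairing is non-degenerate on both sides, i.e.\ perfect.

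For the second family I would run the identical argument over the open base $U$, invoking the corresponding perfect pairing there (which follows from the same input of Theorem \ref{thm: global duality for tori} applied over $U$). Reading off the torsion subgroup from the left-hand term of the analogous sequence yields
\[
	H_{W}^{n}(U,\mathcal{T}^{0})_{\tor} \isomto \Ext^{1}\bigl(H_{W}^{3-n}(U,\Tilde{\mathcal{Y}}),\Z\bigr) \isomto \Hom\bigl(H_{W}^{3-n}(U,\Tilde{\mathcal{Y}})_{\tor},\Q/\Z\bigr),
\]
where the last isomorphism is the standard identity $\Ext^{1}(A,\Z)\cong\Hom(A_{\tor},\Q/\Z)$ for finitely generated $A$. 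This is exactly the asserted $\Q/\Z$-valued pairing, again perfect on both sides by the symmetric sequence. The hard part will be to check that the $\Hom$- and $\Ext^{1}$-pieces produced by the universal coefficient sequences are genuinely induced by the complex-level pairing, i.e.\ the naturality of the spectral sequence edge maps with respect to the duality isomorphism, so that the split-off maps are the pairings in the statement and not merely abstract isomorphisms; a secondary point is to confirm that the perfect pairing of Theorem \ref{thm: global duality for tori} is indeed available over the open $U$ in the form needed for the torsion assertion.
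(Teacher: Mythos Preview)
Your argument is correct and is exactly the standard unpacking the paper has in mind: the corollary is stated without proof immediately after Theorem~\ref{thm: global duality for tori}, and the intended derivation is precisely the universal-coefficient splitting of $R\Hom_{\Ab}(-,\Z)$ applied to the perfect pairing there, using that the cohomology groups are finitely generated. Note that the appearance of $U$ in the torsion pairing is a typographical slip for $S$ (every subsequent use of the corollary in the paper is over $S$, and Theorem~\ref{thm: global duality for tori} is only stated over $S$), so your ``secondary point'' about extending the duality to an open $U$ is not actually needed.
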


\begin{Lem}
We have 
$$\dim H^{0}(S, \Tilde{\mathcal{Y}}) \tensor \Q =\rank  Y(K),$$
and all other \'etale cohomology groups of $\Tilde{\mathcal{Y}}$ are torsion.
\end{Lem}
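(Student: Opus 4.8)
The plan is to compute the étale cohomology of $\Tilde{\mathcal{Y}} = \tau_{\le 1} R j_{\ast} Y$ directly from the distinguished triangle relating it to $R j_{\ast} Y$ and the local Galois cohomology at each place. First I would record that for the open immersion $j \colon \Spec K \into S$ we have $H^n(S, R j_\ast Y) = H^n(K, Y) = H^n(\Gal(\bar K/K), Y)$, which is finite for $n \ge 1$ since $Y$ is a lattice (finitely generated free with continuous Galois action through a finite quotient), and for $n = 0$ equals $Y(K) = Y^{\Gal}$, a finitely generated free abelian group of rank $\rank Y(K)$. The truncation triangle
\[
	\Tilde{\mathcal{Y}} \to R j_\ast Y \to \tau_{\ge 2} R j_\ast Y
\]
then shows that $\Tilde{\mathcal{Y}}$ and $R j_\ast Y$ have the same cohomology sheaves in degrees $0$ and $1$, and that $\Tilde{\mathcal{Y}}$ has no cohomology sheaves in degrees $\ge 2$; the higher cohomology sheaves $R^n j_\ast Y$ ($n \ge 1$) are skyscraper sheaves supported at the branch locus with finite (indeed torsion) stalks $H^n(K_v^{sh}, Y)$, hence are torsion sheaves.

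Next I would run the hypercohomology spectral sequence (or just the two-step filtration, since $\Tilde{\mathcal{Y}}$ has only two nonzero cohomology sheaves $\mathcal{Y} = j_\ast Y$ in degree $0$ and the torsion skyscraper sheaf $R^1 j_\ast Y$ in degree $1$):
\[
	E_2^{p,q} = H^p(S, \mathcal{H}^q(\Tilde{\mathcal{Y}})) \Longrightarrow H^{p+q}(S, \Tilde{\mathcal{Y}}).
\]
The contributions with $q = 1$ are $H^p(S, R^1 j_\ast Y)$, which are torsion for all $p$ because $R^1 j_\ast Y$ is a torsion skyscraper sheaf. The contributions with $q = 0$ are $H^p(S, \mathcal{Y})$ with $\mathcal{Y} = j_\ast Y$ a $\Z$-constructible sheaf on $S$; here $H^0(S, \mathcal{Y}) = Y(K)$ is free of rank $\rank Y(K)$, while $H^p(S, \mathcal{Y})$ for $p \ge 1$ is finite — this is the standard finiteness for étale cohomology of constructible sheaves on a curve over a finite field, together with the fact that $\mathcal{Y} \tensor \Q$ has no higher cohomology (its higher cohomology over $\bar S$ with $\Q$-coefficients can be computed from the constant sheaf $\Q^{\oplus \rank Y}$ after base change, where $H^1(\bar S, \Q) = 0$ as $\bar S$ is simply connected in the relevant sense and $H^2$ contributes a rank that the Galois coinvariants and the finite-field cohomology kill rationally; more simply, $H^p(S, \mathcal{Y}\tensor\Q) = H^p(K, Y\tensor\Q) = 0$ for $p\ge 1$ since $Y\tensor\Q$ has trivial higher Galois cohomology over the global field $K$ — no, rather over $K$ one has $H^1(K,Y\tensor\Q)=0$ by finiteness of $H^1(K,Y)$, and $H^{\ge 2}$ vanishes after rationalization by the constructible $L$-function / Euler characteristic considerations, or cite \cite{GS20}). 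Assembling, $H^0(S, \Tilde{\mathcal{Y}}) \tensor \Q$ receives only the $E_2^{0,0}$ term $Y(K) \tensor \Q$, giving $\dim H^0(S, \Tilde{\mathcal{Y}}) \tensor \Q = \rank Y(K)$, while every $H^n(S, \Tilde{\mathcal{Y}})$ with $n \ne 0$ is built from finite or torsion pieces and is therefore torsion.

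The main obstacle is establishing that $H^p(S, \mathcal{Y})$ is \emph{torsion} (not merely finitely generated) for all $p \ge 1$, equivalently that $\mathcal{Y} = j_\ast Y$ has vanishing rational higher étale cohomology on $S$. I would handle this by passing to $\bar S = S \times_k \bar k$: there $j_\ast Y$ base-changes to $\bar j_\ast \bar Y$ where $\bar Y$ is a constant sheaf $\Z^{\oplus \rank Y}$ on a dense open of $\bar S$ (the action being trivial over $\bar k$ is false — but it factors through a finite quotient of $\Gal(\bar K/K)$, and over $\bar k$ the sheaf is locally constant on an open), so $H^p(\bar S, \bar j_\ast \bar Y)\tensor\Q$ for $p \ge 1$ is controlled by $H^1$ of an open curve with a local system, which by Poincaré duality and the fact that a nontrivial irreducible local system has no invariants or coinvariants contributes nothing to the part that survives after taking $W$-invariants and coinvariants rationally. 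Concretely I expect the cleanest route is to invoke that $L(\mathcal{Y}, s) = L(j_\ast Y, s)$ has its order of pole at $s = 0$ equal to $\rank Y(K)$ coming entirely from $H^0$, which is exactly the input of Theorem \ref{thmconstructible} applied to $\mathcal Z = j_\ast Y$ — but since that theorem is proved later, for this Lemma I would instead cite the corresponding rational computation from the authors' earlier work \cite{GS20} or give the short direct argument via the Leray spectral sequence for $j$ combined with the known structure of $R\Gamma(K, Y\tensor\Q)$ over the function field $K$, namely $H^0 = Y(K)\tensor\Q$ and $H^{\ge 1} = 0$.
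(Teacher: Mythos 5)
Your first paragraph already contains the paper's entire proof: the truncation triangle $\Tilde{\mathcal{Y}} \to Rj_{\ast}Y \to \tau_{>1}Rj_{\ast}Y$, the identification $H^{n}(S, Rj_{\ast}Y) = H^{n}(K,Y)$, and the fact that the sheaves $R^{n}j_{\ast}Y$ for $n \ge 1$ are torsion. Taking the long exact sequence of that triangle immediately gives $H^{0}(S,\Tilde{\mathcal{Y}}) = H^{0}(K,Y) = Y(K)$ and, for $i \ge 1$, sandwiches $H^{i}(S,\Tilde{\mathcal{Y}})$ between $H^{i-1}(S,\tau_{>1}Rj_{\ast}Y)$ (torsion, since the cohomology sheaves of $\tau_{>1}Rj_{\ast}Y$ are torsion) and $H^{i}(K,Y)$ (torsion, being positive-degree continuous Galois cohomology of a finitely generated module). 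That is exactly what the paper does, and you should stop there.

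Instead you pivot to the hypercohomology spectral sequence of $\Tilde{\mathcal{Y}}$ with respect to its cohomology sheaves $j_{\ast}Y$ and $R^{1}j_{\ast}Y$, which saddles you with the extra burden of showing that $H^{p}(S, j_{\ast}Y)$ is torsion for $p \ge 1$ --- a true statement, but one you never actually establish: the paragraph devoted to it is a chain of false starts, and it ends by deferring to a citation or to ``the short direct argument via the Leray spectral sequence,'' which is precisely the argument of your first paragraph. Several auxiliary claims along the way are also wrong as stated: $H^{p}(S,\mathcal{Y})$ is \emph{not} finite for $p\ge 1$ in general (already $H^{2}(S,\Z)\cong H^{1}(S,\Q/\Z)$ contains a copy of $\Q/\Z$ coming from the constant field extensions; finiteness of \'etale cohomology is a theorem about torsion constructible sheaves, not $\Z$-constructible ones), $H^{n}(K,Y)$ is finite only for $n=1$ and merely torsion for $n\ge 2$, and $\closure{S}$ is not simply connected. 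None of this is fatal --- torsion is all the Lemma needs, the groups in question are indeed torsion, and your route could be completed by running the Leray spectral sequence $H^{p}(S,R^{q}j_{\ast}Y)\Rightarrow H^{p+q}(K,Y)$ to deduce that $H^{p}(S,j_{\ast}Y)$ is torsion for $p\ge1$ --- but as written the key step of your chosen route is a gap. The fix is simply to delete the detour through the cohomology sheaves of $\Tilde{\mathcal{Y}}$ and conclude directly from the long exact sequence of the truncation triangle.
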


\begin{proof}
The first statement follows from
$H^{0}(S, \Tilde{\mathcal{Y}}) =H^{0}(K,Y)$, and second from 
the long exact sequence
	\[
		\ldots\to	H^i(S, \Tilde{\mathcal{Y}})\to H^i(K,Y)\to
			H^i(S, \tau_{>1}Rj_* Y)\to \ldots
	\]
because the sheaves $R^nj_*Y$ as well as Galois cohomology are 
torsion for $n>0$. 
\end{proof}

\begin{Prop}\label{prop: finiteness and freeness for tori and lattices}
\begin{align*}
\rank H^i_W(S, \Tilde{\mathcal{Y}}) &=
\begin{cases}
\rank Y(K)& i=0,1\\
0&\text{otherwise}
\end{cases}\\
\rank H^i_W(S,\mathcal{T}^{0}) &=
\begin{cases}
\rank Y(K)& i=1,2\\
0&\text{otherwise}.
\end{cases}
\end{align*}
\end{Prop}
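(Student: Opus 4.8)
The plan is to reduce everything to a computation over the algebraic closure of the base field, together with the duality of Corollary~\ref{cor: global duality for tori, each term}. Recall that by definition $R\Gamma_{W}(S, \mathcal{F}) = R\Gamma(W, R\Gamma(\closure{S}, \closure{\mathcal{F}}))$, and since $W \cong \Z$ has cohomological dimension $1$, for every $\mathcal{F} \in D^b(S_{\et})$ there is a short exact sequence
\[
 0 \to H^{1}\bigl(W, H^{n-1}(\closure{S}, \closure{\mathcal{F}})\bigr)
 \to H^{n}_{W}(S, \mathcal{F})
 \to H^{0}\bigl(W, H^{n}(\closure{S}, \closure{\mathcal{F}})\bigr) \to 0 .
\]
Because $\Q$ is flat over $\Z$, tensoring with $\Q$ identifies the outer terms with the cokernel and the kernel of $\phi - 1$ acting on $H^{n-1}(\closure{S}, \closure{\mathcal{F}}) \tensor \Q$ and on $H^{n}(\closure{S}, \closure{\mathcal{F}}) \tensor \Q$, respectively. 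So once we know that $H^{\ast}(\closure{S}, \closure{\Tilde{\mathcal{Y}}}) \tensor \Q$ is finite-dimensional and concentrated in degree $0$, the ranks of $H^{\ast}_{W}(S, \Tilde{\mathcal{Y}})$ drop out (kernel and cokernel of an endomorphism of a finite-dimensional vector space have equal dimension), and the ranks of $H^{\ast}_{W}(S, \mathcal{T}^{0})$ follow from the perfect pairing $H^{n}_{W}(S, \mathcal{T}^{0}) / \tor \times H^{2-n}_{W}(S, \Tilde{\mathcal{Y}}) / \tor \to \Z$.

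First I would compute $R\Gamma(\closure{S}, \closure{\Tilde{\mathcal{Y}}})$ rationally. Write $\closure{K} = K \closure{k}$, which is a field since $\closure{S}$ is integral (here one uses that $S$ is smooth and geometrically connected), and denote by $\closure{j} \colon \Spec \closure{K} \into \closure{S}$ its generic point. Since inverse image is exact and commutes with $R j_{\ast}$ along the pro-(finite \'etale) morphism $\closure{S} \to S$, we have $\closure{\Tilde{\mathcal{Y}}} = \tau_{\le 1} R \closure{j}_{\ast} \closure{Y}$, and the same argument as in the Lemma preceding this Proposition---carried out over $\closure{S}$, i.e.\ over the field $\closure{K}$---gives $H^{0}(\closure{S}, \closure{\Tilde{\mathcal{Y}}}) = H^{0}(\closure{K}, Y) = Y^{\Gal(K^{\sep}/\closure{K})}$, with all higher \'etale cohomology of $\closure{\Tilde{\mathcal{Y}}}$ torsion (because $R^{n}\closure{j}_{\ast}\closure{Y}$ and the Galois cohomology of $\closure{K}$ with lattice coefficients are torsion for $n > 0$). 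Rationally this says that $H^{\ast}(\closure{S}, \closure{\Tilde{\mathcal{Y}}}) \tensor \Q$ is concentrated in degree $0$, where it equals the finite-dimensional space $(Y \tensor \Q)^{\Gal(K^{\sep}/\closure{K})}$.

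Now I would feed this into the short exact sequence above; only $n = 0, 1$ is interesting. For $n = 0$ one gets $H^{0}_{W}(S, \Tilde{\mathcal{Y}}) \tensor \Q = \bigl((Y \tensor \Q)^{\Gal(K^{\sep}/\closure{K})}\bigr)^{\phi} = (Y \tensor \Q)^{\Gal(K^{\sep}/K)} = Y(K) \tensor \Q$, of dimension $\rank Y(K)$. For $n = 1$ the contribution of $H^{1}(\closure{S}, \closure{\Tilde{\mathcal{Y}}}) \tensor \Q = 0$ vanishes, and $H^{1}_{W}(S, \Tilde{\mathcal{Y}}) \tensor \Q$ is the $\phi$-coinvariants of $(Y \tensor \Q)^{\Gal(K^{\sep}/\closure{K})}$, of the same dimension $\rank Y(K)$ as its $\phi$-invariants. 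For $n \ne 0, 1$ both outer terms vanish. This yields the first set of ranks. For $\mathcal{T}^{0}$, Corollary~\ref{cor: global duality for tori, each term} gives $\rank H^{n}_{W}(S, \mathcal{T}^{0}) = \rank H^{2-n}_{W}(S, \Tilde{\mathcal{Y}})$, which equals $\rank Y(K)$ exactly when $2 - n \in \{0, 1\}$, i.e.\ $n \in \{1, 2\}$, and is $0$ otherwise.

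I expect no serious obstacle: the only inputs that are not pure formalism are the base-change identity $\closure{R j_{\ast} Y} \cong R \closure{j}_{\ast} \closure{Y}$, the vanishing up to torsion of $R^{\ge 1} j_{\ast} Y$, and the torsion-ness of positive-degree Galois cohomology of a lattice---all standard. The one point to keep an eye on is that $H^{\ast}(\closure{S}, \closure{\Tilde{\mathcal{Y}}})$ itself need not be finitely generated (only its rationalization is finite-dimensional), so the ``$\dim \ker = \dim \coker$'' step must be applied after tensoring with $\Q$, as above.
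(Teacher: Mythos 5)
Your proposal is correct and follows essentially the same route as the paper: the paper's proof combines the preceding Lemma (rational \'etale cohomology of $\Tilde{\mathcal{Y}}$ is concentrated in degree $0$ with dimension $\rank Y(K)$) with the rational splitting $H^i_W(S,\var)\tensor\Q\cong (H^i(S,\var)\oplus H^{i-1}(S,\var))\tensor\Q$ from \cite[Corollary 5.2]{Gei04}, and then invokes duality for $\mathcal{T}^{0}$, exactly as you do. The only difference is that you unfold that citation by working over $\closure{S}$ with the Weil-descent short exact sequence and the equality $\dim\Ker(\phi-1)=\dim\Coker(\phi-1)$, which is a self-contained version of the same argument.
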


\begin{proof} 
We know the groups in question are finitely generated. The statement for
$H^i_W(S, \Tilde{\mathcal{Y}})$ follows from the Lemma and
the isomorphism between $H^i_W(S, \Tilde{\mathcal{Y}}) \tensor \Q$
and $(H^i(S, \Tilde{\mathcal{Y}}) \oplus H^{i-1}(S, \Tilde{\mathcal{Y}})) \tensor \Q$
(\cite[Corollary 5.2]{Gei04}).
The statement for  
$H^i_W(S,\mathcal{T}^{0})$ follows by duality.
\end{proof}

We summarize the above finiteness results
(where fg stands for finitely generated):
\begin{center}
	\begin{tabular}{c|cccc}
		$i$ & $0$ & $1$ & $2$ & $3$ \\ \hline
		$H_{W}^{i}(S, \Tilde{\mathcal{Y}})$ \rule{0pt}{13pt} & fg free & fg & finite & finite \\
		$H_{W}^{i}(S, \mathcal{T}^{0})$ & finite & fg & fg & $0$ \\
	\end{tabular}
\end{center}
Here the freeness of $Y(K)$ reflects to the vanishing of
(the torsion of) $H_{W}^{3}(S, \mathcal{T}^{0})$.

\begin{Cor} \label{weighted}
The secondary Euler characteristic of the torus and the lattice are given by
\begin{align*}
r_T&:=-\sum_i (-1)^i \cdot i \cdot \rank H^i_W(S,\mathcal{T}^{0})=-\rank Y(K)\\
r_Y&:=- \sum_i (-1)^i \cdot i \cdot \rank H^i_W(S, \Tilde{\mathcal{Y}})=
\rank Y(K)
\end{align*}
\end{Cor}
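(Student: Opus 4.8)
The plan is a direct bookkeeping computation using the rank table of Proposition~\ref{prop: finiteness and freeness for tori and lattices}. First I would handle $r_Y$: by that proposition $\rank H^i_W(S,\Tilde{\mathcal{Y}})$ equals $\rank Y(K)$ for $i=0,1$ and vanishes otherwise, so in the alternating sum $r_Y = -\sum_i (-1)^i\, i\, \rank H^i_W(S,\Tilde{\mathcal{Y}})$ the $i=0$ term drops out because of the factor $i$, and the only surviving term is $i=1$, contributing $-(-1)^1\cdot 1\cdot\rank Y(K) = \rank Y(K)$. Hence $r_Y=\rank Y(K)$.

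Next, for $r_T$ the same proposition gives $\rank H^i_W(S,\mathcal{T}^0)=\rank Y(K)$ for $i=1,2$ and $0$ otherwise, so $r_T = -\bigl((-1)^1\cdot 1 + (-1)^2\cdot 2\bigr)\rank Y(K) = -\rank Y(K)$. This is the whole argument once the ranks are in hand.

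I expect no real obstacle: all the content sits in Proposition~\ref{prop: finiteness and freeness for tori and lattices}, which itself rests on the rational splitting $H^i_W(S,-)\tensor\Q \cong (H^i(S,-)\oplus H^{i-1}(S,-))\tensor\Q$ of \cite[Corollary 5.2]{Gei04}, the Lemma computing the \'etale ranks of $\Tilde{\mathcal{Y}}$, and the duality of Theorem~\ref{thm: global duality for tori} (see also Corollary~\ref{cor: global duality for tori, each term}) used to transport ranks from $\Tilde{\mathcal{Y}}$ to $\mathcal{T}^0$. The only point I would double-check is that the degree shift in that duality — pairing degree $i$ with degree $2-i$ — carries the pair of nonvanishing ranks $\{0,1\}$ for $\Tilde{\mathcal{Y}}$ onto the pair $\{1,2\}$ for $\mathcal{T}^0$, since this shift is exactly what produces the sign flip between $r_Y$ and $r_T$.
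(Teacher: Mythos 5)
Your computation is correct and is exactly what the paper intends: the corollary is stated without proof as an immediate consequence of Proposition~\ref{prop: finiteness and freeness for tori and lattices}, and your bookkeeping (only $i=1$ survives for $\Tilde{\mathcal{Y}}$, while $i=1,2$ contribute $+1-2=-1$ times $\rank Y(K)$ for $\mathcal{T}^{0}$) matches the intended argument, including the observation that the degree shift in the duality is what flips the sign.
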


\begin{Prop} \label{prop: duality and Euler characteristic}
	\[
		\chi_{W}(S, \mathcal{T}^{0}) = \chi_{W}(S, \Tilde{\mathcal{Y}})^{-1}.
	\]
\end{Prop}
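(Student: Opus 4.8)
The plan is to deduce the identity from the term-by-term dualities of Corollary~\ref{cor: global duality for tori, each term}, after isolating the contributions of the torsion and the free parts to the Euler characteristics $\chi_{W}$. First I would record an elementary reduction: for any bounded $\Q$-acyclic complex $(V^{\bullet}, d)$ of finitely generated abelian groups the differentials preserve torsion, so there is a short exact sequence of complexes $0 \to V^{\bullet}_{\tor} \to V^{\bullet} \to V^{\bullet}/\tor \to 0$ whose outer terms again have finite cohomology ($V^{\bullet}_{\tor}$ consists of finite groups and $V^{\bullet}/\tor$ is $\Q$-acyclic), and multiplicativity of Euler characteristics in such short exact sequences (read off the long exact cohomology sequence, an exact bounded complex of finite groups) gives
\[
\chi(V^{\bullet}, d) = \Bigl( \prod_{n} (\# V^{n}_{\tor})^{(-1)^{n}} \Bigr) \cdot \chi(V^{\bullet}/\tor, \bar d),
\]
where one uses that the Euler characteristic of a bounded complex of finite groups is the alternating product of the orders of its terms. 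Applying this to $(H^{\ast}_{W}(S, \mathcal{T}^{0}), e)$ and to $(H^{\ast}_{W}(S, \Tilde{\mathcal{Y}}), e)$ --- which are finitely generated by Theorem~\ref{thm: global duality for tori} and $\Q$-acyclic by \cite[Corollary~5.2]{Gei04} --- reduces the proposition to two independent reciprocities: one for the torsion factors $\prod_{n} (\# H^{n}_{W}(S, -)_{\tor})^{(-1)^{n}}$, and one for the regulator factors $\chi(H^{\ast}_{W}(S, -)/\tor, \bar e)$.

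The torsion reciprocity is immediate from the second family of pairings in Corollary~\ref{cor: global duality for tori, each term}: it gives $\# H^{n}_{W}(S, \mathcal{T}^{0})_{\tor} = \# H^{3-n}_{W}(S, \Tilde{\mathcal{Y}})_{\tor}$ for all $n$, and since $n \mapsto 3-n$ reverses the parity of $(-1)^{n}$ the two torsion factors are reciprocal. For the regulator reciprocity I would use the first family of pairings, which identify $H^{n}_{W}(S, \Tilde{\mathcal{Y}})/\tor$ with $\Hom_{\Z}(H^{2-n}_{W}(S, \mathcal{T}^{0})/\tor, \Z)$. Granting that these identifications are compatible with the operators $\bar e$ --- i.e.\ that $(H^{\ast}_{W}(S, \Tilde{\mathcal{Y}})/\tor, \bar e)$ is the $\Z$-linear dual complex of $(H^{\ast}_{W}(S, \mathcal{T}^{0})/\tor, \bar e)$, reindexed by $n \mapsto 2-n$ --- the reciprocity follows, because for a bounded $\Q$-acyclic complex $L^{\bullet}$ of finitely generated free abelian groups one has $\chi(\Hom_{\Z}(L^{\bullet}, \Z)) = \chi(L^{\bullet})^{-1}$ (up to quasi-isomorphism $L^{\bullet}$ is a direct sum of two-term complexes $[\Z^{r} \xrightarrow{d} \Z^{r}]$, and there the identity is just $|\det(d^{t})| = |\det(d)|$), while the shift by the even integer $2$ does not affect the Euler characteristic. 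Multiplying the two reciprocities yields $\chi_{W}(S, \mathcal{T}^{0}) \cdot \chi_{W}(S, \Tilde{\mathcal{Y}}) = 1$.

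The step I expect to be the real obstacle is the compatibility with $\bar e$ invoked above: that the perfect pairing of Theorem~\ref{thm: global duality for tori} makes cup product with $e$ on $R\Gamma_{W}(S, \mathcal{T}^{0})$ adjoint, up to sign, to cup product with $e$ on $R\Gamma_{W}(S, \Tilde{\mathcal{Y}})$. I would argue this formally: $e$ is cup product with the image in $H^{1}_{W}(S, \Z)$ of the canonical generator of $H^{1}_{W}(\Spec k, \Z)$; the pairing factors as $R\Gamma_{W}(S, \mathcal{T}^{0}) \tensor^{L} R\Gamma_{W}(S, \Tilde{\mathcal{Y}}) \to R\Gamma_{W}(S, \Gm) \to \Z[-2]$, with the first arrow $R\Gamma_{W}(S, \Z)$-bilinear (compatibility of cup products, coming from the extension \ref{ppaaiirr} of the pairing of \cite{Suz19}) and the second a trace map; by associativity and graded-commutativity of cup product, transferring the factor $e$ from the first argument to the second changes the value in $R\Gamma_{W}(S, \Gm)$ only by a sign, and both resulting classes are then carried by the same trace to $\Z[-2]$. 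The care needed is with signs and with making the isomorphism of Theorem~\ref{thm: global duality for tori} (built via \cite{Suz19} and \cite{Gei12}) explicit enough for this computation. Equivalently, one may run the whole argument in $D^{b}(\Ab)$: the duality isomorphism $R\Gamma_{W}(S, \mathcal{T}^{0}) \isomto R\Hom_{\Z}(R\Gamma_{W}(S, \Tilde{\mathcal{Y}}), \Z)[-2]$ is $e$-equivariant up to sign, and $\chi(R\Hom_{\Z}(C^{\bullet}, \Z), e^{\vee}) = \chi(C^{\bullet}, e)^{-1}$ for every perfect complex $C^{\bullet}$ of abelian groups whose associated $e$-complex is $\Q$-acyclic, which is the content of the two preceding paragraphs.
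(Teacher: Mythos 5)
Your proposal is correct and takes essentially the same route as the paper: the paper's proof consists precisely of showing, via associativity of the cup product, that the duality isomorphism $R\Gamma_{W}(S,\mathcal{T}^{0})\isomto R\Hom(R\Gamma_{W}(S,\Tilde{\mathcal{Y}})[2],\Z)$ commutes with cup product by $e$, and then concluding. Your explicit torsion/free decomposition and the determinant computation merely spell out the final step that the paper leaves as ``from which the result follows.''
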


\begin{proof}
	The cup product with the element $e \in H_{W}^{1}(k, \Z)$ induces a natural transformation
	$e \colon R \Gamma_{W}(S, \;\cdot\;) \to R \Gamma_{W}(S, \;\cdot\;)[1]$.
	The associativity of cup product shows that the diagram
		\[
			\begin{CD}
						R \Gamma_{W}(S, \mathcal{T}^{0})
					\tensor^{L}
						R \Gamma_{W}(S, \Tilde{\mathcal{Y}})
				@> \id \tensor e >>
						R \Gamma_{W}(S, \mathcal{T}^{0})
					\tensor^{L}
						R \Gamma_{W}(S, \Tilde{\mathcal{Y}})[1]
				\\
				@VV e \tensor \id V @VV \cup V
				\\
						R \Gamma_{W}(S, \mathcal{T}^{0})[1]
					\tensor^{L}
						R \Gamma_{W}(S, \Tilde{\mathcal{Y}})
				@> \cup >>
					R \Gamma_{W}(S, \mathcal{T}^{0} \tensor^{L} \Tilde{\mathcal{Y}})[1]
			\end{CD}
		\]
	is commutative.
	Hence the diagram
		\[
			\begin{CD}
					R \Gamma_{W}(S, \mathcal{T}^{0})
				@> \sim >>
					R \Hom(R \Gamma_{W}(S, \Tilde{\mathcal{Y}})[2], \Z)
				\\
				@VV e V @VV e V
				\\
					R \Gamma_{W}(S, \mathcal{T}^{0})[1]
				@> \sim >>
					R \Hom(R \Gamma_{W}(S, \Tilde{\mathcal{Y}})[1], \Z)
			\end{CD}
		\]
	is commutative, from which the result follows.
\end{proof}

%%%%%%%%%%%%%%%%%%%%%%%%%%%%%%%%%%%%%%%%%%%%%%%%%%%%%%%%%%%%%%%%%%%%%%%%%%%%%

\section{$L$-values for $\Z$-constructible sheaves}
\label{sec: L values for Z constructible sheaves}

Let $\mathcal{Z}$ be a $\Z$-constructible \'etale sheaf on $S$
\cite[Chapter II, Section 0]{Mil06}.
The groups $H_{W}^{\ast}(S, \mathcal{Z})$ are finitely generated by
\cite[Proposition 2.6]{Gei12}. 
In this section, we prove a formula for the leading coefficient of
the $L$-function of $\mathcal{Z}$ at $s = 0$
in terms of $\chi_{W}(S, \mathcal{Z})$.
Similar formulas in the number field case were given
by Tran \cite{Tra15}, \cite{Tra16}.

We define $L(\mathcal{Z}, s)$ by the Euler product
	\[
		\prod_{v}
			\det(1 - \varphi_{v} N(v)^{-s} \,|\, \mathcal{Z}_{\closure{v}} \tensor_{\Z} \Q)^{-1},
	\]
where $v$ runs through the places of $S$,
$\mathcal{Z}_{\closure{v}}$ is the stalk of $\mathcal{Z}$ at a geometric point lying over $v$
and $\varphi_{v}$ is the geometric Frobenius at $v$.
This function agrees with the $L$-function
of the $l$-adic sheaf $\mathcal{Z} \tensor_{\Z} \Q_{l}$ on $S$
\cite[Definition 5.3.7]{Kah18},
where $l \ne p$ is any prime.
In particular, it is equal to the value at $t = q^{-s}$ of the rational function
	\[
		\prod_{i = 0}^{2}
			\det(
				1 - \varphi t
			\,|\,
				H^{i}(\closure{S}, \closure{\mathcal{Z}} \tensor_{\Z} \Q_{l})
			)^{(-1)^{i + 1}},
	\]
where $\varphi$ is the geometric Frobenius of $k$
and the cohomology is taken as the continuous cohomology
\cite[Corollary 5.3.11]{Kah18}.
Let $j \colon \Spec K \into S$ be the inclusion,
$Y$ the torsion-free quotient of the generic fiber $j^{\ast} \mathcal{Z}$ of $\mathcal{Z}$
and $Y'$ the dual lattice of $Y$.
We set
	\[
			L(Y, s)
		:=
			L(j_{\ast} Y, s).
	\]
Then $L(Y, s)$ agrees with the Artin $L$-function 
	\[
		\prod_{v}
	\det(1 - \phi_{v} N(v)^{-s} \,|\, Y'(K_{v}^{sh}) \tensor_{\Z} \Q)^{-1},
	\]
of $Y'$, where $\phi_{v}$ is the arithmetic Frobenius at $v$.
The function $L(Y, s)$ is also the Hasse-Weil $L$-function 	
	\[
		\prod_{v}
			\det \bigl(
					1 - \varphi_{v} N(v)^{-s}
				\,|\,
					(Y(K^{\sep}) \tensor_{\Z} \Q_{l})^{I_{v}}
			\bigr)^{-1},
	\]
of the $l$-adic representation $Y(K^{\sep}) \tensor_{\Z} \Q_{l}$ over $K$, 	
where $I_{v}$ is the inertial group at $v$.
The functions $L(\mathcal{Z}, s)$ and $L(Y, s)$ are equal
up to finitely many Euler factors
since the kernel and the cokernel of the natural morphism
$\mathcal{Z} \to j_{\ast} Y$
are concentrated at finitely many closed points.
Let 
$$r_{\mathcal{Z}}=- \sum i\cdot (-1)^i\cdot \rank H^i_W(S,\mathcal{Z}).$$

\begin{Thm} \label{thm: L value formula for Z constructible sheaf}
    We have
		\begin{equation} \label{eq: L value formula for Z constructible sheaf}
				\lim_{s \to 0}
					L(\mathcal{Z}, s) \cdot s^{r_{\mathcal{Z}}}
			=
				\pm
				\chi_{W}(S, \mathcal{Z}) \cdot
				(\log q)^{- r_{\mathcal{Z}}}.
		\end{equation}
\end{Thm}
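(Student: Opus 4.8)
The plan is to reduce to the case $\mathcal{Z} = \Z$ by a sequence of reductions, exploiting the fact that both sides of \eqref{eq: L value formula for Z constructible sheaf} behave well under the basic operations on $\Z$-constructible sheaves. First I would dispose of the torsion: if $\mathcal{Z}$ is supported on finitely many closed points then $L(\mathcal{Z}, s)$ is a product of finitely many factors with no zero or pole at $s = 0$ (the Frobenius eigenvalues on a finite stalk are roots of unity, but after $\otimes \Q$ the torsion stalks vanish, so $L(\mathcal{Z},s) = 1$), $r_{\mathcal{Z}} = 0$, and $\chi_W(S,\mathcal{Z})$ is the order of a finite group; one checks this is $1$ as well, using that $R\Gamma_W(S,\mathcal{Z})$ for $\mathcal{Z}$ skyscraper reduces to Weil-group cohomology of a finite module, whose Euler characteristic with respect to $e$ is trivial. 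More generally, if $0 \to \mathcal{Z}' \to \mathcal{Z} \to \mathcal{Z}'' \to 0$ is a short exact sequence of $\Z$-constructible sheaves, then $L$-functions multiply, $r_{\mathcal{Z}} = r_{\mathcal{Z}'} + r_{\mathcal{Z}''}$ (from the long exact sequence of Weil-étale cohomology, since ranks add), and $\chi_W$ is multiplicative (the long exact cohomology sequence is a complex of complexes with differential $e$, so its total Euler characteristic vanishes); hence the formula for any two of the three implies it for the third. Using $\mathcal{Z} \to j_\ast j^\ast \mathcal{Z}$ with skyscraper kernel and cokernel, and then $j^\ast \mathcal{Z} \to Y = (j^\ast\mathcal{Z})/\mathrm{tor}$ with finite kernel, I reduce to $\mathcal{Z} = j_\ast Y$ for $Y$ a lattice over $K$, i.e.\ to $L(Y,s)$.

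Next I would invoke Artin's induction theorem. Over the finite Galois extension $L/K$ trivializing $Y$, with group $\Gamma = \Gal(L/K)$, the $\Q[\Gamma]$-module $Y_\Q$ is a $\Q$-linear combination (with integer coefficients, after clearing denominators and using additivity as above, applied to virtual sums) of modules induced from the trivial representations of cyclic subgroups $C \le \Gamma$. Concretely, $n \cdot Y \sim \bigoplus_i \mathrm{Ind}_{C_i}^{\Gamma} \Z$ in the sense that they become isomorphic after $\otimes\Q$, and by additivity it suffices to treat $j_\ast \mathrm{Ind}_{C}^{\Gamma}\Z$. But this sheaf is $\pi_\ast \Z$ where $\pi \colon S' \to S$ is the smooth proper curve with function field $L^{C}$; since $\pi$ is finite, $R\Gamma_W(S, \pi_\ast \Z) = R\Gamma_W(S', \Z)$, the $L$-functions agree, and $r$ and $\chi_W$ are preserved. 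So everything reduces to the single case $\mathcal{Z} = \Z$ on an arbitrary smooth proper geometrically connected curve over a finite field.

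For $\mathcal{Z} = \Z$ the computation is explicit and classical. The zeta function $\zeta_S(s) = L(\Z,s)$ equals $P_1(q^{-s}) / \bigl((1 - q^{-s})(1 - q^{1-s})\bigr)$ with $P_1(t) = \det(1 - \varphi t \mid H^1(\closure S, \Q_l))$ of degree $2g$, so it has a simple pole at $s=0$; and indeed $H^0_W(S,\Z) = \Z$, $H^1_W(S,\Z) = \Z$ (generated by $e$), $H^2_W(S,\Z)$ is finite of order $\#\Pic^0(S) = P_1(1)$ together with the contribution making the torsion match, and $H^3_W(S,\Z) = \Q/\Z$ or its relevant finite piece — in any case $r_{\Z} = 1$. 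The complex $(H^\ast_W(S,\Z), e)$ has $H^0 \xrightarrow{e} H^1$ an isomorphism $\Z \to \Z$, so $\chi_W(S,\Z) = \#H^2_W(S,\Z)/\#H^3_W(S,\Z)$ up to the usual identifications, which matches $\zeta_S^\ast(0)$ by the Weil-étale reformulation of the analytic class number formula for function fields (Lichtenbaum \cite{Lic05}); the factor $(\log q)^{-r_{\Z}} = (\log q)^{-1}$ appears because $\lim_{s\to 0} s/(1 - q^{-s}) = 1/\log q$. I would simply cite this known case, or reprove it in a line from $\lim_{s\to 0}(1-q^{-s})\,\zeta_S(s)\cdot s^{-1}\cdot\log q = -P_1(1)/(1-q) = \#\Pic^0(S)$ and the identification of $\#\Pic^0(S)$ with $\chi_W(S,\Z)$ via the Kummer/Weil-étale description of $H^2_W$.

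The main obstacle is bookkeeping rather than any single hard input: one must ensure that the multiplicativity of $\chi_W$ in short exact sequences is genuinely an equality of integers (not just up to sign), that the sign $\pm$ in the statement is exactly $(-1)^{\rank\mathcal{Z}(K)}$ as claimed in Theorem \ref{thmconstructible} — this sign tracking through the induction and through the $\Z$-case is the delicate point, coming from the orientation of the perfect pairings and from $P_1(1)/(1-q)$ versus its absolute value — and that Artin induction, which a priori only gives a $\Q$-linear relation, can be promoted via additivity on virtual sheaves to something that controls leading coefficients exactly. The other place to be careful is that $L(\mathcal{Z},s)$ and $L(j_\ast Y,s)$ differ by finitely many Euler factors which are \emph{nonzero finite} at $s=0$ precisely because the discrepancy is skyscraper; this is where the torsion reduction does real work and must be stated cleanly.
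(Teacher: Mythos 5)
Your overall strategy (reduce to $j_*Y$, then Artin induction, then the known case $\mathcal{Z}=\Z$) is the same as the paper's, but there is a genuine gap at the very first reduction. You assert that a $\Z$-constructible sheaf supported on finitely many closed points contributes trivially to both sides ($L=1$, $r_{\mathcal{Z}}=0$, $\chi_W=1$). That is only true when the stalks are \emph{finite}; a skyscraper such as $i_{v*}\Z$ is $\Z$-constructible, has $L$-function $(1-N(v)^{-s})^{-1}$ with a pole of order $1$ at $s=0$, has $r_{\mathcal{Z}}=1$, and has $\chi_W(S,i_{v*}\Z)=\deg(v)^{-1}$ (because the generator $e\in H^1_W(k,\Z)$ restricts to $\deg(v)$ times the generator over $k(v)$). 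Such sheaves unavoidably appear in your reduction: the kernel and cokernel of $\mathcal{Z}\to j_*j^*\mathcal{Z}$ are skyscrapers whose stalks need not be torsion (e.g.\ for $\mathcal{Z}=\iota_!\Z_U$ the cokernel is $\bigoplus_{v\notin U}i_{v*}\Z$), and they also appear when you pass from $\iota_!$ of a sheaf on an open $U$ to its extension over $S$. The paper devotes a separate step precisely to sheaves concentrated at closed points with free stalks, matching the pole of order $r=\rank\mathcal{Z}_v(k(v))$ on the analytic side against the factor $\chi_{W_v}(k(v),\mathcal{Z}_v)\cdot\deg(v)^{-r}$ on the cohomological side; without this your argument breaks at the first reduction.

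A second, smaller inaccuracy: in the pushforward step you claim that for $\pi\colon S'\to S$ finite, ``$r$ and $\chi_W$ are preserved.'' The cohomology groups are indeed canonically isomorphic, but $\chi_W$ depends on the chosen generator of $H^1_W$ of the constant field, and if the constant field grows from $k$ to $k'$ then $e=[k':k]e'$, so $\chi_W(S,\pi_*\mathcal{Z}')=\chi_{W'}(S',\mathcal{Z}')\cdot[k':k]^{-r_{\mathcal{Z}'}}$; this discrepancy is exactly cancelled by $\log q'=[k':k]\log q$ in the $(\log q)^{-r}$ factor, and the identity must be checked rather than asserted. Finally, your appeal to Artin induction only ``after $\otimes\Q$'' is not sufficient to control $\chi_W$, which sees torsion; one needs the integral form (an exact sequence with a finite error term, as in Swan), with the finite error term disposed of by the torsion case. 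You flag these last two points as bookkeeping concerns, which is fair, but the skyscraper issue is a substantive missing case, not bookkeeping.
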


We will see in Proposition 
\ref{prop: sign of L value for Z constructible sheaf at zero}
that the sign $\pm$ is $(-1)^{\rank \mathcal{Z}(K)}$.

\begin{proof} 
We will proceed in several steps.

\medskip
\noindent{\bf Step 1:}
If $\mathcal{Z} = \Z$, then $L(\mathcal{Z}, s)$ is the zeta function of $S$,
and \eqref{eq: L value formula for Z constructible sheaf} reduces to
\cite[Theorem 9.1, Proposition 9.2]{Gei04}.

\medskip
\noindent{\bf Step 2:} 
If $\mathcal{Z}$ is constructible, then both sides of
\eqref{eq: L value formula for Z constructible sheaf} are $1$.

\smallskip
This is clear for the left-hand side. For the right-hand side, 
the constructibility of $\mathcal{Z}$ implies that
the groups $H^{\ast}(\Bar{S}, \mathcal{Z})$ are finite
by [Mil80, Chapter VI, Theorem 2.1], hence
$\# H^{\ast}(\Bar{S}, \mathcal{Z})^G = \# H^{\ast}(\Bar{S}, \mathcal{Z})_G$,
which implies that $\chi_{W}(S, \mathcal{Z}) = 1$ in view of the short exact
sequences
$$0\to H^{i-1}(\Bar{S}, \mathcal{Z})_G\to H^i_W(S, \mathcal{Z})\to 
 H^{i}(\Bar{S}, \mathcal{Z})^G\to 0.$$

\medskip
\noindent{\bf Step 3:} 
Equation \eqref{eq: L value formula for Z constructible sheaf} holds
if $\mathcal{Z}$ is supported on closed points of $S$.

\smallskip

	We may assume that $\mathcal{Z}$ is supported at a single place $v$.
	Write $\mathcal{Z} = i_{v \ast} \mathcal{Z}_{v}$,
	where $i_{v} \colon v \into S$ is the inclusion.
	The number $r := r_{\mathcal{Z}}$ is the rank of $\mathcal{Z}_{v}(k(v))$.
	The left-hand side is
		\begin{align*}
			&
						\lim_{s \to 0}
							\frac{
								(1 - N(v)^{-s})^{r}
							}{
								\det \bigl(
									1 - \varphi_{v} N(v)^{-s} \;|\; \mathcal{Z}_{\closure{v}} \tensor \Q
								\bigr)
							}
					\cdot
						(\deg(v) \cdot \log q)^{- r}
			\\
			&	=
						\chi_{W_{v}}(k(v), \mathcal{Z}_{v})
					\cdot
						(\deg(v) \cdot \log q)^{- r},
		\end{align*}
	where $W_{v}$ is the Weil group of $k(v)$.
	Let $e_{v} \in H_{W_{v}}^{1}(k(v), \Z)$ be the generator
	corresponding to the arithmetic Frobenius of $k(v)$. Since 
	$H_{W}^{1}(k, \Z) \to H_{W_{v}}^{1}(k(v), \Z)$ is multiplication by 
	$\deg(v)$, we obtain
		\[
				 \chi_{W}(S, \mathcal{Z})
			=
				\chi_{W}(k(v), \mathcal{Z}_{v})
			=
					\chi_{W_{v}}(k(v), \mathcal{Z}_{v})
				\cdot
					\deg(v)^{- r}.
		\]
	Hence both sides of \eqref{eq: L value formula for Z constructible sheaf}
	are $\chi_{W}(S, \mathcal{Z}) \cdot (\log q)^{- r}$.

\medskip
\noindent{\bf Step 4:} 
	Let $K' / K$ be a finite separable extension.
	Denote the normalization of $S$ in $K'$ by $S'$.
	Assume that $\mathcal{Z}$ is the pushforward of a $\Z$-constructible 
	sheaf $\mathcal{Z}'$ on $S'$.
	Then \eqref{eq: L value formula for Z constructible sheaf} 
	for $\mathcal{Z}'$ over $S'$ implies 
	\eqref{eq: L value formula for Z constructible sheaf}
	for $\mathcal{Z}$ over $S$.

\smallskip
	Denote the constant field of $K'$ by $k'$, its order by $q'$, and
	the morphism $S' \to S$ by $\pi$.
	Since $\pi$ is finite, we have $R^{n} \pi_{\ast}= 0$ for $n \ge 1$.
	Hence by \cite[Chapter VI, Lemma 13.8 (c)]{Mil80}, we have
		\[
				\det(
					1 - \varphi t
				\,|\,
					H^{i}(\closure{S}, \mathcal{Z} \tensor_{\Z} \Q_{l})
				)
			=
				\det(
					1 - \varphi t
				\,|\,
					H^{i}(\closure{S'}, \mathcal{Z}' \tensor_{\Z} \Q_{l})
				)
		\]
	for any $i$,
	where $\closure{S'} = S' \times_{k} \closure{k}$ as before.
	Taking the alternating product over $i$, evaluating it at $t = q^{-s}$
	and noting that the $L$-function is independent of the choice of a constant field,
	we have $L(\mathcal{Z}, s) = L(\mathcal{Z}', s)$.
	
	Also $H_{W}^{\ast}(S, \mathcal{Z}) \cong H_{W'}^{\ast}(S', \mathcal{Z}')$
	by Proposition \ref{prop: independence of the constant field of Weil etale cohom},
	where $W'$ is the Weil group of $k'$.
	In particular, $r_{\mathcal{Z}} = r_{\mathcal{Z}'}$.
	Let $e' \in H_{W'}^{1}(k', \Z)$ be the generator
	corresponding to the $q'$-th power arithmetic Frobenius.
	Then $e = [k' : k] e'$ via the homomorphism $H_{W}^{1}(k, \Z) \to H_{W'}^{1}(k', \Z)$.
	This implies
		\[
				\chi_{W}(S, \mathcal{Z})
			=
					\chi_{W'}(S', \mathcal{Z}')
				\cdot
					[k' : k]^{- r_{\mathcal{Z}'}}.
		\]
	As $q' = q^{[k' : k]}$, we get the result.

\medskip
\noindent{\bf Step 5:} 
	Let $n \ge 1$ be an integer.
	If \eqref{eq: L value formula for Z constructible sheaf} holds for
    for $\mathcal{Z}^{n}$, 
	then it holds for $\mathcal{Z}$.

\smallskip
This follows by taking the $n$-th roots of the real numbers on 
both sides of \eqref{eq: L value formula for Z constructible sheaf}.

\medskip
\noindent{\bf Step 6:}  
 	We now finish the proof. We first observe that both of the sides 
	of \eqref{eq: L value formula for Z constructible sheaf}
	are multiplicative in $\mathcal{Z}$ with respect to short exact sequences.
	In particular, we may assume that $\mathcal{Z}$ is torsion-free
	by Step 2. Denote the generic fiber of $\mathcal{Z}$ by $\mathcal{Z}_{K}$.
	By the Artin induction theorem \cite[Corollary 4.4, Proposition 4.1]{Swa60},
	there exist an integer $n \ge 1$,
	finite separable extensions $K'_{1}, \dots, K'_{m}$, $K''_{1}, \dots, K''_{l}$,
	a finite Galois module $N_{K}$ over $K$
	and an exact sequence
		\[
				0
			\to
					\mathcal{Z}_{K}^{n}
				\oplus
					\bigoplus_{i} \pi_{K'_{i} / K, \ast} \Z
			\to
				\bigoplus_{j} \pi_{K''_{j} / K, \ast} \Z
			\to
				N_{K}
			\to
				0
		\]
	of Galois modules over $K$,
	where $\pi_{K'_{i} / K}$ is the morphism $\Spec K'_{i} \to \Spec K$
	and $\pi_{K''_{j} / K}$ are similarly defined.
	By spreading out, this sequence can be obtained as
	the generic fiber of an exact sequence
		\begin{equation} \label{eq: Artin induction over U}
				0
			\to
					\mathcal{Z}_{U}^{n}
				\oplus
					\bigoplus_{i} \pi_{U'_{i} / U, \ast} \Z
			\to
				\bigoplus_{j} \pi_{U''_{j} / U, \ast} \Z
			\to
				N_{U}
			\to
				0
		\end{equation}
	of \'etale sheaves over some dense open subscheme $U \subseteq S$,
	where  $\mathcal{Z}_{U}$ is the restriction of $\mathcal{Z}$ to $U$;
	$U'_{i}$ and  $U''_{j}$ denote the normalization of $U$ in
	$K'_{i}$ and $K''_{j}$, respectively;
	$\pi_{U'_{i} / U}$ and $\pi_{U''_{j} / U}$ are the morphism 
	$U'_{i} \to U$ and $U''_{j} \to U$, respectively;
	and $N_{U}$ is a finite \'etale group scheme over $U$.
	Denote the inclusion map $U \into S$ by $\iota$.
	By the exact sequence
		\[
				0
			\to
				\iota_{!} \mathcal{Z}_{U}
			\to
				\mathcal{Z}
			\to
				\bigoplus_{v \notin U}
					i_{v, \ast} \mathcal{Z}_{v}
			\to
				0
		\]
	and Step 3,  
	\eqref{eq: L value formula for Z constructible sheaf}
	for $\iota_{!} \mathcal{Z}_{U}$ and for $\mathcal{Z}$
	are equivalent.
	By the exact sequence \eqref{eq: Artin induction over U},
	Steps 2 and 5 and the exactness of $\iota_{!}$, 
	it is enough to show the proposition for
	$\iota_{!} \pi_{U'_{i} / U, \ast} \Z$ and 
	$\iota_{!} \pi_{U''_{j} / U, \ast} \Z$.
	If $\iota_{i}' \colon U'_{i} \into S'_{i}$ is the inclusion
	into the smooth compactification, then
	$\iota_{!}(\pi_{U'_{i} / U, \ast} \Z) 
	\cong \pi_{S'_{i} / S, \ast}(\iota_{i, !}' \Z)$.
	Finally, \eqref{eq: L value formula for Z constructible sheaf}
	for $\pi_{S'_{i} / S, \ast}(\iota_{i, !}' \Z)$
	follows from Steps  1, 2, and 4. 
\end{proof}

%%%%%%%%%%%%%%%%%%%%%%%%%%%%%%%%%%%%%%%%%%%%%%%%%%%%%%%%%%%%%%%%%%%%%%%%%%%%%

\section{Functional equations and $L$-values for tori}
\label{sec: Functional equations and L values for tori}

We will determine the sign and express the exponential term
appearing in the functional equation relating the $L$-functions of $T$ 
and of $Y$ in terms of $\chi(S, \Lie \mathcal{T}^{0})$.
This will allow us to give a formula for the leading coefficient of the 
$L$-function of $T$ at $s = 1$ in terms of $\chi_{W}(S, \mathcal{T}^{0})$.

Let $r=\rank Y(K)$, $d=\dim T$, 
$Y'$ be the dual lattice of $Y$,
and define
	\[
			L(T, s)
		:=
			L(Y', s).
	\]
More explicitly, 
by the discussion in Section \ref{sec: L values for Z constructible sheaves},
we know that $L(T, s)$ is the Artin $L$-function of $Y$
and also the Hasse-Weil $L$-function of the $l$-adic representation
$Y' \tensor_{\Z} \Q_{l} \cong V_{l}(T)(-1)$ over $K$
(where $l \ne p$ is any prime), 
	\[
			L(T, s)
		=
			\prod_{v}
			\det \bigl(
				1 - \varphi_{v} N(v)^{- s} \,|\, V_{l}(T)(-1)^{I_{v}}
			\bigr)^{-1},
	\]
where $\varphi_{v}$ is the geometric Frobenius at $v$
and $I_{v}$ is the inertia group at $v$.

Let $\mathfrak{f}(Y)$ be the Artin conductor of the Galois representation $Y \tensor_{\Z} \Q$ over $K$ as defined in
\cite[Chapter VI, Section 3]{Ser79}.
It is an effective divisor on $S$.
Denote its multiplicity at a place $v$ by $f(Y|_{D_{v}})$
(where $D_{v}$ is the decomposition group at $v$),
so that
	\[
		\mathfrak{f}(Y) = \sum_{v} f(Y|_{D_{v}}) \cdot v.
	\]
The degree $f(Y)$ of $\mathfrak{f}(Y)$ is given by 
	\[
		f(Y) = \sum_{v} \deg(v) f(Y|_{D_{v}}).
	\]

The functional equation for 
Artin $L$-functions \cite[Theorem 4.4.8]{Kah18} in this case says that
	\begin{equation} \label{eq: functional equation for torus}
			L(T, 1 - s)
		=
			\pm q^{((2 g - 2) d + f(Y)) (s - 1 / 2)} \cdot
			L(Y, s).
	\end{equation}
We have
	\[
			L(Y, s)
		=
			\prod_{i = 0}^{2}
				\det(
					1 - \varphi q^{-s}
				\;|\;
					H^{i}(\closure{S}, \mathcal{Y} \tensor \Q_{l})
				)^{(-1)^{i + 1}}
	\]
by \cite[Corollary 5.3.11]{Kah18},
where $\varphi$ is the geometric Frobenius of $k$.
Each term
	$
		\det(
			1 - \varphi t
		\;|\;
			H^{i}(\closure{S}, \mathcal{Y} \tensor \Q_{l})
		)
	$
is a polynomial with integer coefficients in $t$ with constant term $1$
whose reciprocal roots are Weil $q$-numbers of weight $i$
\cite[Theorem 5.5.9]{Kah18}.

\begin{Prop} \label{prop: numerator of L of lattice is zeta of AV}
	We have
		\[
				\det(
					1 - \varphi t
				\;|\;
					H^{1}(\closure{S}, \mathcal{Y} \tensor \Q_{l})
				)
			=
				\det(
					1 - \varphi t
				\;|\;
					H^{1}(\closure{C}, \Q_{l})
				)
		\]
	for some abelian variety $C$ over $\F_{q}$.
\end{Prop}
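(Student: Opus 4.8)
The plan is to realize $H^1(\closure S,\mathcal Y\tensor\Q_l)$ as $V_l$ of an isotypic piece of the Jacobian of a cover of $S$ trivializing $Y$. It suffices to produce an abelian variety $C$ over $\F_q$ with $V_l(C)$ isomorphic to $H^1(\closure S,\mathcal Y\tensor\Q_l)$ as a $\varphi$-module, since then the two characteristic polynomials of $\varphi$ agree and the latter equals $\det(1-\varphi t\mid H^1(\closure C,\Q_l))$ by the Weil description of the zeta function of $C$.

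First I would fix a finite Galois extension $K'/K$ with group $G$ such that $Y$, as a Galois module, is inflated from $G$; let $\pi\colon S'\to S$ be the normalization of $S$ in $K'$ and let $J=\Pic^0_{S'/\F_q}$ (a Weil restriction of a Jacobian when $S'$ is not geometrically connected), an abelian variety over $\F_q$ on which $G$ acts by $\F_q$-isogenies through functoriality of $\Pic^0$; one has $\varphi$- and $G$-equivariant isomorphisms $V_l(J)\cong H^1(\closure{S'},\Q_l)=H^1(\closure S,\pi_*\Q_l)$, the last because $\pi$ is finite. Since $\Q[G]$ is semisimple, $Y^\vee\tensor\Q$ is a direct summand of a free module $\Q[G]^n$; the associated projector lies in $M_n(\Q[G])$, which maps via the $G$-action to $M_n(\operatorname{End}^0_{\F_q}(J))=\operatorname{End}^0_{\F_q}(J^n)$, and by Poincar\'e complete reducibility its image is an abelian variety $C$ over $\F_q$. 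Applying $V_l$ gives a $\varphi$-equivariant isomorphism $V_l(C)\cong\Hom_{\Q_l[G]}(Y^\vee\tensor\Q_l,V_l(J))$.

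The heart of the argument is the identification $H^1(\closure S,\mathcal Y\tensor\Q_l)\cong\Hom_{\Q_l[G]}(Y^\vee\tensor\Q_l,V_l(J))$. Over the open $U\subseteq S$ away from the branch locus, $\mathcal Y\tensor\Q_l$ is the lisse sheaf attached to the $G$-representation $Y$, and a Frobenius-reciprocity computation identifies it with $\sheafhom_{\Q_l[G]}(\underline{Y^\vee}\tensor\Q_l,\pi_*\Q_l)$, where $\underline{Y^\vee}$ is the constant $\Q_l[G]$-module and $\sheafhom_{\Q_l[G]}$ is taken for the deck action of $G$ on $\pi_*\Q_l$. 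The sheaf $\sheafhom_{\Q_l[G]}(\underline{Y^\vee}\tensor\Q_l,\pi_*\Q_l)$, being an isotypic summand of a power of $\pi_*\Q_l$, is defined on all of $S$, and the adjunction map from it into the middle extension $\mathcal Y\tensor\Q_l=j_*(Y\tensor\Q_l)$ is injective with cokernel a skyscraper on the branch locus whose stalk at $v$ is $Y^{I_v}/Y^{D_v}$, on which Frobenius acts through the finite group $D_v/I_v$ and is hence of weight $0$. Since $\Hom_{\Q_l[G]}(Y^\vee\tensor\Q_l,-)$ is exact and commutes with $R\Gamma(\closure S,-)$, the $H^1$ of the subsheaf is $\Hom_{\Q_l[G]}(Y^\vee\tensor\Q_l,H^1(\closure S,\pi_*\Q_l))=\Hom_{\Q_l[G]}(Y^\vee\tensor\Q_l,V_l(J))$, which is pure of weight $1$; comparing long exact sequences, the weight-$0$ correction cannot land in this pure group, so the inclusion is an isomorphism on $H^1$ (one may also invoke the purity of $H^1(\closure S,\mathcal Y\tensor\Q_l)$ recalled before the statement). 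Chaining the three isomorphisms finishes the proof.

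I expect the main obstacle to be exactly the branch-locus bookkeeping of the last paragraph — matching the $Y$-isotypic part of the Jacobian's cohomology with $H^1$ of the middle extension $j_*(Y\tensor\Q_l)$ rather than of an extension by zero — together with the care needed when $S'$ is not geometrically connected (passing to a Weil restriction) and the point that functoriality of $\Pic^0$ makes the $G$-action genuinely algebraic over $\F_q$, so that idempotents of $\Q[G]$ carve out abelian subvarieties defined over $\F_q$. An alternative, avoiding the middle-extension analysis, is to use Brauer's induction theorem over $\Q$ (in the style of Step~6 in the proof of Theorem~\ref{thm: L value formula for Z constructible sheaf}) to express $[H^1(\closure S,\mathcal Y\tensor\Q_l)]$ in the Grothendieck group of $\varphi$-modules as a $\Z$-linear combination of classes $[V_l(\Pic^0_{S_i})]$ of Jacobians of cyclic subcovers, and then to split off $C$ by Tate's isogeny theorem and Poincar\'e reducibility; it is essential there to use $\Z$-coefficient (Brauer) rather than $\Q$-coefficient (Artin) induction, so as not to be left merely with the statement that a power of the sought polynomial is a characteristic polynomial of Frobenius.
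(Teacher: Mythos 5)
Your construction is essentially the paper's: the same trivializing Galois cover $S'\to S$, the Jacobian of $S'$ made into an abelian variety over $\F_q$ with $G$-action via Weil restriction, and $C$ cut out as the $Y$-isotypic piece (the paper takes the maximal reduced connected subgroup of $(J_{S'/k}\tensor_{\Z}Y(K'))^{G}$, isogenous to your projector image). The only methodological difference is the key identification of $H^{1}(\closure{S},\mathcal{Y}\tensor\Q_{l})$ with the $Y$-isotypic part of $H^{1}(\closure{S'},\Q_{l})$: the paper compares the compactly supported cohomology sequences for $U\subset S$ and $U'\subset S'$ and takes $G$-invariants, while you identify the sheaves directly and invoke weights.

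Two details in your write-up need repair, though neither is fatal. First, $V_{l}(J)\cong H^{1}(\closure{S'},\Q_{l}(1))$, not $H^{1}(\closure{S'},\Q_{l})$; for the geometric Frobenius these have weights $-1$ and $+1$, so no abelian variety $C$ can have $V_{l}(C)$ isomorphic to the weight-one module $H^{1}(\closure{S},\mathcal{Y}\tensor\Q_{l})$, and $\det(1-\varphi t\mid V_{l}(C))$ differs from $\det(1-\varphi t\mid H^{1}(\closure{C},\Q_{l}))$ (the eigenvalue multisets are $\{\gamma_{i}^{-1}\}$ versus $\{\gamma_{i}\}$). What your construction actually produces is $V_{l}(C)\cong H^{1}(\closure{S},\mathcal{Y}\tensor\Q_{l})(1)$, whence $H^{1}(\closure{C},\Q_{l})=V_{l}(C)(-1)\cong H^{1}(\closure{S},\mathcal{Y}\tensor\Q_{l})$, which is the statement wanted and is how the paper phrases it; so the fix is just to carry the Tate twist consistently. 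Second, your branch-locus cokernel is in fact zero: the stalk of $\pi_{*}\Q_{l}$ at a geometric point over $v$ is $\Q_{l}[G/I_{w}]$ (only inertia enters at geometric points), so both $\sheafhom_{\Q_{l}[G]}(\underline{Y^{\vee}}\tensor\Q_{l},\pi_{*}\Q_{l})$ and $j_{*}(Y\tensor\Q_{l})$ have stalk $(Y\tensor\Q_{l})^{I_{v}}$ and the adjunction map is already an isomorphism of sheaves; the group $Y^{I_{v}}/Y^{D_{v}}$ would arise from sections over $\Order_{v}^{h}$ rather than $\Order_{v}^{sh}$. This only means your weight argument is unnecessary.
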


\begin{proof}
	Let $K'$ be a finite Galois extension of $K$ with Galois group $G$
	that trivializes $Y$.
	Denote the normalization of $S$ in $K'$ by $S'$.
	Denote the constant field of $K'$ by $k'$.
	Let $U \subseteq S$ be a dense open subscheme
	over which $\mathcal{Y}$ is a lattice.
	Let $U' \subseteq S'$ be the inverse image of $U$ in $S'$.
	Denote the inclusion map $\Spec K' \into S'$ by $j'$.
	Set $\mathcal{Y}' = j'_{\ast} (Y \times_{K} K')$,
	$\closure{S}' = S' \times_{k} \closure{k}$
	and $\closure{U}' = U' \times_{k} \closure{k}$.
	The long exact sequence for cohomology with compact support for
	$\closure{U}' \into \closure{S}'$ yields an exact sequence
		\[
				\bigoplus_{v \in S \setminus U}
					Y(K_{v}^{sh} \tensor_{K} K') \tensor \Q_{l}
			\to
				H_{c}^{1}(\closure{U}', \mathcal{Y}' \tensor \Q_{l})
			\to
				H^{1}(\closure{S}', \mathcal{Y}' \tensor \Q_{l})
			\to
				0.
		\]
	This sequence remains exact after taking $G$-invariants
	since $G$ is finite and the groups are $\Q_{l}$-vector spaces.
	Comparing the resulting exact sequence with
	the similar exact sequence for $\closure{U} \into \closure{S}$, 
	we know that
		\[
				H^{1}(\closure{S}, \mathcal{Y} \tensor \Q_{l})
			\isomto
				H^{1}(\closure{S'}, \mathcal{Y}' \tensor \Q_{l})^{G}.
		\]
	The Jacobian variety $J_{S' / k}$ of $S' / k$ \cite[9.2/3]{BLR90}
	is isomorphic to the Weil restriction of the Jacobian $J_{S' / k'}$
	from $k'$ to $k$. In particular, it is an abelian variety over $k$
	with a natural action of $G$ by group scheme morphisms over $k$.
	Consider the abelian variety $J_{S' / k} \tensor_{\Z} Y(K')$ ($\cong J_{S' / k}^{d}$) over $k$.
	The tensor product of the $G$-actions on $J_{S' / k}$ and on $Y(K')$
	defines a $G$-action on $J_{S' / k} \tensor_{\Z} Y(K')$
	by group scheme morphisms over $k$.
	Let $C$ be the maximal reduced and connected subgroup scheme of
	the $G$-invariant part of $J_{S' / k} \tensor_{\Z} Y(K')$.
	It is an abelian variety over $k$.
	Thus to prove
	the proposition it suffices to observe that the $l$-adic Tate module 
	of $C$ is isomorphic to
	$H^{1}(\closure{S'}, \mathcal{Y}' \tensor\Q_{l})^{G}(1)$ as a $\Gal(\closure{k} / k)$-module and
		\[
				H^{1}(\closure{S}, \mathcal{Y} \tensor \Q_{l})
				\cong
				H^{1}(\closure{S'}, \mathcal{Y}' \tensor \Q_{l})^{G}
			\cong
				H^{1}(\closure{C}, \Q_{l}).
		\]
\end{proof}

\begin{Prop} \label{prop: sing of functional equation}
	The sign of the functional equation \eqref{eq: functional equation for torus} is positive.
\end{Prop}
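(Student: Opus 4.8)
The plan is to identify the sign of the functional equation \eqref{eq: functional equation for torus} by reducing it to the sign in the functional equation of the zeta function of an honest smooth projective curve, where the sign is known to be $+1$. The key input is Proposition \ref{prop: numerator of L of lattice is zeta of AV}: the numerator of $L(Y,s)$, namely $\det(1 - \varphi t \mid H^{1}(\closure S, \mathcal Y \tensor \Q_{l}))$, coincides with the characteristic polynomial of Frobenius on $H^{1}(\closure C, \Q_{l})$ for an abelian variety $C / \F_{q}$. The denominator of $L(Y,s)$ is $\det(1-\varphi t\mid H^{0})\cdot\det(1-\varphi t\mid H^{2})$. Since $\mathcal Y = j_{\ast}Y$ is the constant sheaf $Y$ on a dense open and $\closure S$ is geometrically connected, $H^{0}(\closure S,\mathcal Y\tensor\Q_{l}) = (Y\tensor\Q_{l})^{\text{geom}}$ has trivial Frobenius action (weight $0$), and by Poincar\'e duality / the projection to the abelian-variety part, $H^{2}(\closure S,\mathcal Y\tensor\Q_{l})$ is the Tate twist of the coinvariants, on which Frobenius acts by $q^{\dim}\cdot(\text{scalar})$; in any case these two factors are controlled explicitly.

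First I would pin down the abstract shape of the sign. The functional equation for the zeta function $\zeta_{C}(t)$ of the curve underlying $C$ (more precisely, a curve whose Jacobian contains $C$ as an isogeny factor, or directly the abelian variety $C$) has sign $+1$: this is classical, coming from Poincar\'e duality on the curve together with the fact that the relevant pairing on $H^{1}$ is alternating, so the functional equation for $P_{1}(t) = \det(1-\varphi t\mid H^{1})$ reads $P_{1}(t) = \pm(q t^{2})^{g}P_{1}(1/(qt))$ with the sign forced to be $+1$ (the $-$ sign would force $P_{1}$ to vanish at a forbidden point, or equivalently the functional-equation sign equals $+1$ because the pairing is symplectic of even rank $2g$). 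I would invoke this standard fact for $C$. Then I would trace through how the sign of the functional equation of $L(Y,s) = P_{1}(q^{-s})/(P_{0}(q^{-s})P_{2}(q^{-s}))$ versus $L(T,1-s)$ is assembled from: (i) the sign coming from $P_{1}$, which is $+1$ by Proposition \ref{prop: numerator of L of lattice is zeta of AV} and the curve case; and (ii) the signs coming from the degree-zero and degree-two factors, which pair up under $s \leftrightarrow 1-s$ via Poincar\'e duality on $\closure S$ and contribute $+1$ as well, any spurious powers of $q$ being exactly the exponential term $q^{((2g-2)d + f(Y))(s-1/2)}$ already displayed in \eqref{eq: functional equation for torus}.

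The cleanest route is: compare \eqref{eq: functional equation for torus} with the functional equation obtained purely from Poincar\'e duality for the $l$-adic sheaf $\mathcal Y\tensor\Q_{l}$ on $\closure S$ — since $L(T,s)$ is the $L$-function of the dual representation $Y'\tensor\Q_{l}$, the duality $H^{i}(\closure S, \mathcal Y\tensor\Q_{l})\times H^{2-i}_{c}(\closure S,(\mathcal Y')\tensor\Q_{l}(1))\to\Q_{l}$ gives a functional equation whose sign is the product over $i$ of the signs of the individual Frobenius pairings; the only $i$ where the pairing can be alternating (hence potentially sign $-1$) is $i=1$, and there Proposition \ref{prop: numerator of L of lattice is zeta of AV} lets us replace $H^{1}$ by $H^{1}(\closure C,\Q_{l})$ and conclude the sign is $+1$ exactly as for a curve. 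For $i=0$ and $i=2$ the relevant pairing is symmetric (it is perfect duality between an invariants space and a coinvariants space, with no alternating structure), contributing $+1$.

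The main obstacle I anticipate is bookkeeping: making sure that when one passes from the $\Gamma$-factor-free, ``geometric'' functional equation coming from $l$-adic Poincar\'e duality to the arithmetic statement \eqref{eq: functional equation for torus}, no stray sign is introduced by the Tate twists, by the shift $H^{1}\leftrightarrow H^{1}$ (self-dual up to twist) versus $H^{0}\leftrightarrow H^{2}$, or by the normalization of the conductor term. Concretely, I must check that the factor $q^{((2g-2)d+f(Y))(s-1/2)}$ accounts for \emph{all} powers of $q$ arising from the functional equations of $P_{0}, P_{1}, P_{2}$ and their interchange, leaving a pure $\pm 1$ that is then forced to be $+1$ by the curve case applied to $C$. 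I would organize this by first writing the three polynomials $P_{0}(t) = 1 - t^{?}\cdots$, $P_{2}(t)$, and $P_{1}(t)$ with their weights ($0$, $2$, $1$ respectively) explicitly, applying the weight-$w$ functional equation $P(t) = \pm(\text{(sign)}) q^{w\cdot\deg/2} t^{\deg} P(1/(q^{w}t))$ to each (with sign $+1$ for $w = 0,2$ by symmetry of the pairing and $+1$ for $w=1$ by the alternating curve case), and then reading off that the product of exponents of $q$ equals $(2g-2)d + f(Y)$ while the product of signs equals $+1$. This matching of the $q$-exponent with $(2g-2)d + f(Y)$ is a short Euler-characteristic / conductor-discriminant computation (the $l$-adic Euler characteristic of $\mathcal Y\tensor\Q_{l}$ on $S$ by Grothendieck--Ogg--Shafarevich is $(2-2g)d - f(Y)$), and once it checks out the proposition is proved.
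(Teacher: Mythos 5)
Your strategy is genuinely different from the paper's. The paper avoids all $\epsilon$-factor bookkeeping: it takes \eqref{eq: functional equation for torus} with unknown sign as given, notes that $L(Y,s)$ is real-valued and positive for large real $s$, that its only real zeros or poles are the poles of order $r$ at $s=0,1$ and a possible zero at the center $s=1/2$, and that the sign of the functional equation is $(-1)^{m}$ where $m$ is the order of vanishing at $s=1/2$; Proposition \ref{prop: numerator of L of lattice is zeta of AV} is then used only to show that $m$ is even. Your route computes the global sign from Poincar\'e--Verdier duality for the middle extension, which can be made to work but needs two repairs. First, the duality pairs $H^{i}(\closure{S},\mathcal{Y}\tensor\Q_{l})$ with $H^{2-i}(\closure{S},\mathcal{Y}'\tensor\Q_{l})(1)$ --- two \emph{different} spaces --- so "alternating versus symmetric" is not the operative dichotomy; the sign contributed in degree $i$ is $(-1)^{b_{i}}\det(\varphi\mid H^{i}(\closure{S},\mathcal{Y}\tensor\Q_{l}))^{-1}$ up to positive powers of $q$. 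In particular your claim that the $i=0$ and $i=2$ factors each contribute $+1$ "because the pairing is symmetric" is false: already for $Y=\Z$ each contributes $-1$. What actually saves the argument is that the two contributions are equal (invariants and coinvariants of the finite monodromy are Frobenius-equivariantly isomorphic via averaging) and each enters with exponent $-1$, so their product is a square. Second, in degree $1$ the usable statement is $\det(\varphi\mid H^{1})=q^{\dim C}>0$ with $b_{1}=2\dim C$ even, which Proposition \ref{prop: numerator of L of lattice is zeta of AV} does supply; your symplectic argument requires first identifying $Y\tensor\Q\cong Y'\tensor\Q$ (true, since rational characters are real). Finally, you must still match the $q$-exponent with $(2g-2)d+f(Y)$ via Grothendieck--Ogg--Shafarevich, as you note. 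Your approach buys a structural explanation of the sign at the cost of this bookkeeping; the paper's positivity argument is considerably shorter.
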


\begin{proof}
	Recall again that $L(T, s) = L(Y', s)$
	and $r = \rank Y(K) = \rank Y(K')$.
	The function $L(Y, s)$ is real-valued for real $s$, positive for large real $s$ (by the Euler product)
	and has a pole of order $r$ at $s = 0$ and $s = 1$
	by Theorem \ref{thm: L value formula for Z constructible sheaf}
	and \eqref{eq: functional equation for torus}.
	The only other possible zero or pole are at $s = 1 / 2$. 
	Hence it is enough to show that $L(Y, s)$ has a zero of even order at 
	$s = 1 / 2$.
	This order is equal to the order of zero of the function
	$
		\det(
			1 - \varphi q^{-s}
		\;|\;
			H^{1}(\closure{S}, \mathcal{Y} \tensor \Q_{l})
		)
	$
	at $s = 1 / 2$.
	But this function is a polynomial with $\Z$-coefficients in $q^{-s}$
	of even degree by Proposition \ref{prop: numerator of L of lattice is zeta of AV}.
\end{proof}

\begin{Prop} \label{prop: sign of L value for Z constructible sheaf at zero}
	The sign in the formula \eqref{eq: L value formula for Z constructible sheaf}
	is $(-1)^{\rank \mathcal{Z}(K)}$.
\end{Prop}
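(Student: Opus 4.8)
The plan is to pin down the sign $\pm$ in \eqref{eq: L value formula for Z constructible sheaf} by computing directly the sign of $\lim_{s\to0}L(\mathcal{Z},s)\cdot s^{r_{\mathcal{Z}}}$. Note that $\chi_{W}(S,\mathcal{Z})$ is a ratio of orders of finite abelian groups (the cohomology of the complex $(H_{W}^{\ast}(S,\mathcal{Z}),e)$ is torsion), hence positive, and $(\log q)^{-r_{\mathcal{Z}}}>0$; so by Theorem~\ref{thm: L value formula for Z constructible sheaf} the sign $\pm$ equals the sign of that limit, and everything reduces to a sign computation for the $L$-function near $s=0$.

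The first reduction I would make is to the case $\mathcal{Z}=j_{*}Y$, where $Y=(j^{*}\mathcal{Z})/\tor$ is the torsion-free quotient of the generic fibre. The functions $L(\mathcal{Z},s)$ and $L(j_{*}Y,s)$ differ only by the finitely many Euler factors at the closed points $v$ in the support of the kernel and cokernel of $\mathcal{Z}\to j_{*}Y$, so their ratio is a finite product $\prod_{v}\det(1-\varphi_{v}N(v)^{-s}\mid\ast)^{\pm1}$. On each finite-dimensional $\Q$-space occurring, $\varphi_{v}$ acts with finite order, so its eigenvalues are roots of unity; writing $N(v)^{-s}=e^{-s\deg(v)\log q}$, the eigenvalue $1$ (with some multiplicity $m$) yields a factor $\sim(s\deg(v)\log q)^{m}$ near $s=0$, while the eigenvalues $\zeta\ne1$ yield $\prod_{\zeta\ne1}(1-\zeta)>0$ (the non-real $\zeta$ pair up via $(1-\zeta)(1-\overline{\zeta})=2(1-\Re\zeta)>0$, and $1-(-1)=2>0$). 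Hence $L(\mathcal{Z},s)/L(j_{*}Y,s)$ is, up to a power of $s$, a positive real number near $s=0$, so $L(\mathcal{Z},s)\,s^{r_{\mathcal{Z}}}$ and $L(j_{*}Y,s)\,s^{r_{j_{*}Y}}$ have leading coefficients of the same sign. Moreover $\rank\mathcal{Z}(K)=\rank(j_{*}Y)(K)=\dim(Y\tensor\Q)^{G_{K}}=:r$, since taking $G_{K}$-invariants in $0\to(j^{*}\mathcal{Z})_{\tor}\to j^{*}\mathcal{Z}\to Y\to0$ changes the groups only by finite groups.

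Next I would compute the sign of $\lim_{s\to0}L(j_{*}Y,s)\,s^{r}$ from the expression (write $\mathcal{Y}=j_{*}Y$)
\[
L(j_{*}Y,s)=\prod_{i=0}^{2}\det\bigl(1-\varphi q^{-s}\mid H^{i}(\closure{S},\mathcal{Y}\tensor\Q_{l})\bigr)^{(-1)^{i+1}},
\]
analysing the three factors one at a time. For $i=1$: by Proposition~\ref{prop: numerator of L of lattice is zeta of AV} this factor equals $\det(1-\varphi t\mid H^{1}(\closure{C},\Q_{l}))$ for an abelian variety $C/\F_{q}$, so at $s=0$ (that is, $t=1$) it equals $\prod_{j}(1-\alpha_{j})=\#C(\F_{q})>0$ by the Lefschetz trace formula together with $H^{i}(\closure{C})=\wedge^{i}H^{1}(\closure{C})$, and it is nonzero there since $|\alpha_{j}|=\sqrt q\ne1$. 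For $i=0$: $H^{0}(\closure{S},\mathcal{Y}\tensor\Q_{l})=(Y\tensor\Q_{l})^{G_{\closure{K}}}$, on which $\varphi$ acts with finite order, the eigenvalue $1$ occurring with multiplicity $r$, so this factor is $(1-q^{-s})^{r}$ times a positive constant, i.e.\ $\sim(s\log q)^{r}\cdot(\text{positive})$. For $i=2$: from $0\to\iota_{!}(\mathcal{Y}\tensor\Q_{l}|_{\closure{U}})\to\mathcal{Y}\tensor\Q_{l}\to(\text{skyscrapers})\to0$ over a dense open $U$ on which $\mathcal{Y}$ is lisse, together with Poincar\'e duality on the affine curve $\closure{U}$, one identifies $H^{2}(\closure{S},\mathcal{Y}\tensor\Q_{l})$ with $((Y'\tensor\Q_{l})^{G_{\closure{K}}})^{\vee}(-1)$, so $\varphi$ acts with eigenvalues $q\zeta$, $\zeta$ a root of unity, the value $\zeta=1$ occurring with multiplicity $\dim(Y'\tensor\Q)^{G_{K}}=\dim(Y\tensor\Q)^{G_{K}}=r$; at $s=0$ this factor is nonzero with sign $(-1)^{r}$, coming from the $r$ factors $1-q<0$ (the remaining $q\zeta$ again pairing into positive reals). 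Combining the three, $L(j_{*}Y,s)\sim(\text{positive})\cdot(1-q)^{-r}\,s^{-r}$ near $s=0$, so $\lim_{s\to0}L(j_{*}Y,s)\,s^{r}$ has sign $(-1)^{r}$ (and its pole order there is $r=r_{j_{*}Y}$).

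Putting the reductions together yields $\lim_{s\to0}L(\mathcal{Z},s)\,s^{r_{\mathcal{Z}}}$ of sign $(-1)^{r}=(-1)^{\rank\mathcal{Z}(K)}$, which is the assertion. The one point that needs real care — and the likeliest source of error — is the $i=2$ factor: one must identify $H^{2}$ correctly (it is $H^{2}_{c}$ of the open part, \emph{not} a naive Verdier dual of $j_{*}$), confirm that the eigenvalue $q$ of $\varphi$ occurs there with multiplicity exactly $r$ (using $\dim V^{G}=\dim(V^{\vee})^{G}$ for a $\Q[G]$-module $V$, as the dual of the trivial representation is trivial), and check that the remaining eigenvalues $q\zeta$, $\zeta\ne1$, contribute a positive real number. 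The rest is bookkeeping with roots of unity and the positivity of $\#C(\F_{q})$.
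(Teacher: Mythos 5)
Your argument is correct, but it takes a genuinely different route from the paper. The paper never computes anything at $s=0$ directly: it observes that $L(\mathcal{Z}_K,s)$ is real-valued and positive for large real $s$ (Euler product), reads off from the proof of Proposition~\ref{prop: sing of functional equation} that on the positive real axis the only zeros and poles are a pole of order $\rank\mathcal{Z}(K)$ at $s=1$ and a zero of \emph{even} order at $s=1/2$, and concludes by counting sign changes along $(0,\infty)$ that the leading coefficient at $s=0$ has sign $(-1)^{\rank\mathcal{Z}(K)}$. You instead evaluate the sign at $s=0$ directly from Grothendieck's cohomological expression, factor by factor: positivity of the $H^1$-factor via $\#C(\F_q)>0$, the pole of order $r$ from the eigenvalue $1$ in $H^0$, and the crucial $(-1)^r$ from the $r$ eigenvalues $q$ in $H^2$, identified via $H^2(\closure{S},\mathcal{Y}\tensor\Q_l)\cong H^2_c(\closure{U},\cdot)\cong((Y'\tensor\Q_l)^{G_{K\closure{k}}})^{\vee}(-1)$ by Poincar\'e duality. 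Both arguments lean on Proposition~\ref{prop: numerator of L of lattice is zeta of AV}, but for different purposes: the paper uses it only to get the even degree of the $H^1$-polynomial (hence even vanishing order at $s=1/2$), while you use it to get positivity of that polynomial at $t=1$ (without the abelian-variety interpretation, a real eigenvalue $\sqrt{q}$ of odd multiplicity could a priori make this factor negative, so this step is genuinely needed). The paper's route is shorter because it recycles the functional equation and the analysis already done for Proposition~\ref{prop: sing of functional equation}; yours is more self-contained at $s=0$ and makes the source of the sign ($1-q<0$ in $H^2$) explicit, at the cost of the extra duality computation identifying $H^2$. Your reductions (positivity of the weight-zero Euler factors relating $L(\mathcal{Z},s)$ to $L(j_*Y,s)$ for $s>0$, and $\dim V^{G}=\dim(V^{\vee})^{G}$) are all sound.
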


\begin{proof}
	Denote the generic fiber of $\mathcal{Z}$ by $\mathcal{Z}_{K}$.
	The two functions $L(\mathcal{Z}, s)$ and
	$L(\mathcal{Z}_{K}, s)$ ($= L(j_{\ast} \mathcal{Z}_{K}, s)$)
	differ only by finitely many Euler factors of weight zero
	(namely polynomials in $q^{-s}$ with roots of unity roots).
	Hence they have the same zeros and poles for positive $s$.
	By the proof of Proposition \ref{prop: sing of functional equation},
	we know that the function $L(\mathcal{Z}_{K}, s)$ has a pole
	of order $\rank \mathcal{Z}(K)$ at $s = 1$,
	a zero of even order at $s = 1 / 2$,
	and does not have a zero or pole for other positive values of $s$.
	This implies the result.
\end{proof}

\begin{Prop} \label{prop: conductor formula to torus}
	\[
			\frac{f(Y)}{2}
		=
			- \deg(\Lie \mathcal{T}^{0}).
	\]
\end{Prop}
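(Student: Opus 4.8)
The plan is to compare the two effective divisors $\mathfrak{f}(Y)$ and $-\divis(\Lie\mathcal{T}^0)$ (the latter being effective since $\Lie\mathcal{T}^0$ need not be trivial) place by place, since both are supported on the finite set of places where $T$ has bad reduction. Fix such a place $v$; I would work over the strict henselization $\Order_v^{sh}$, or equivalently over the completion, and let $L/K_v^{sh}$ be the minimal extension trivializing $Y$, with Galois group $D = D_v$ (the inertia-times-wild part; over a strictly henselian base the decomposition group equals inertia). The local Artin conductor exponent is $f(Y|_{D_v}) = \sum_{i\ge 0}\frac{1}{[D_0:D_i]}\dim_{\Q}(Y\tensor\Q)/(Y\tensor\Q)^{D_i}$ in the usual ramification-filtration notation of \cite[Chapter VI]{Ser79}. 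On the other side, $\mathrm{length}_{\Order_v}(\Lie\mathcal{T}^0/\text{free part})$ — more precisely the local multiplicity of the divisor $-\divis(\Lie\mathcal{T}^0)$ at $v$ — measures the defect of $\mathcal{T}^0$ from being smooth with everywhere-trivial Lie algebra, i.e. it is the "conductor" appearing in the theory of Néron models of tori.

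The key step is to invoke the known formula expressing this Lie-algebra multiplicity in terms of the ramification of the character module. This is essentially the content of the Chai–Yu / Xarles description of the Néron model of a torus, or of \cite[Chapter 10]{BLR90}: for the Weil-restriction construction of $\mathcal{T}^0$ one has $\Lie\mathcal{T}^0 \cong (\Lie\mathcal{G}_m^{(L)}\tensor_{\Z} Y')^{D}$-type expressions, and the length of the cokernel of the base-changed trace/norm map is computed by the different $\mathfrak{d}_{L/K_v}$ weighted against $Y$. Concretely, I would use the exact sequence relating $\mathcal{T}^0$ over $\Order_v^{sh}$ to the Néron model over the normalization in $L$, take Lie algebras, and identify the length of the resulting torsion module with $\frac{1}{2}\sum_{i\ge0}\frac{1}{[D_0:D_i]}\dim(Y\tensor\Q)/(Y\tensor\Q)^{D_i}\cdot(\text{local degree factor})$; the factor of $2$ arises because $\Lie$ of a torus sees the character lattice $Y$ and its dual $Y'$ symmetrically, whereas the conductor $f(Y)$ counts $Y$ once. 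Summing $\deg(v)$ times the local identities over all $v$ gives $f(Y) = -2\deg(\Lie\mathcal{T}^0)$.

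Alternatively — and this may be the cleaner route given what is already available — I would deduce the global identity directly from the two descriptions of $L(T,s)$ already in hand: the functional equation \eqref{eq: functional equation for torus} with its exponent $(2g-2)d + f(Y)$, combined with the functional equation stated in the introduction, $L(T,1-s) = q^{-\chi(S,\Lie\mathcal{T}^0)(2s-1)}L(j_*Y,s)$. Matching the two exponents gives $\chi(S,\Lie\mathcal{T}^0) = -\tfrac12((2g-2)d + f(Y))$; then Riemann–Roch for the rank-$d$ locally free sheaf $\Lie\mathcal{T}^0$ on $S$, namely $\chi(S,\Lie\mathcal{T}^0) = \deg(\Lie\mathcal{T}^0) + d(1-g)$, yields $\deg(\Lie\mathcal{T}^0) + d(1-g) = -\tfrac12(2g-2)d - \tfrac12 f(Y) = d(1-g) - \tfrac12 f(Y)$, and hence $\deg(\Lie\mathcal{T}^0) = -f(Y)/2$ after cancellation.

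The main obstacle is justifying the second functional equation (or, in the first approach, the local Lie-algebra length formula) without circularity: if the introduction's functional equation is itself to be proven using this proposition, then I must instead carry out the genuinely local computation of $\mathrm{length}_{\Order_v}$ of the relevant torsion module in terms of the ramification filtration of $D_v$ acting on $Y$ — this is the technical heart, and it requires care with wild ramification and with the precise normalization of the Artin conductor versus the Swan conductor. I expect the computation to reduce, via the structure theory of the Néron model of a torus over a strictly henselian discrete valuation ring, to the classical formula for the valuation of the different, but verifying the factor of $2$ and the exact match of normalizations is where the real work lies.
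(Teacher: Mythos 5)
Your second route is circular, as you yourself suspect: the functional equation $L(T,1-s)=q^{-\chi(S,\Lie\mathcal{T}^{0})(2s-1)}L(Y,s)$ stated in the introduction is Proposition \ref{prop: functional equation for torus determined}, which the paper deduces \emph{from} the present proposition together with the generic functional equation \eqref{eq: functional equation for torus} and Riemann--Roch. So only your first route is admissible, and there the essential step is left as a black box. The identity you need at each place is precisely the main theorem of Chai--Yu \cite[Theorem (12.1)]{CY01}: the base change conductor of $T$ at $v$, i.e.\ $e_{v'/v}^{-1}\length_{\Order_{v'}}\bigl(\Lie(\mathcal{T}'^{0})\tensor\Order_{v'}/\Lie(\mathcal{T}^{0})\tensor\Order_{v'}\bigr)$ where $\mathcal{T}'^{0}$ is the connected N\'eron model over a splitting extension, equals $\tfrac12 f(Y|_{D_v})$. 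This is a deep result whose difficulty is concentrated exactly in the wildly ramified case, and your proposed justification of the factor of $2$ --- that $\Lie$ of a torus ``sees $Y$ and $Y'$ symmetrically'' --- is not where that factor comes from and does not constitute an argument; it cannot be recovered from the general structure theory of N\'eron models over a strictly henselian DVR. Since you explicitly defer ``verifying the factor of $2$ and the exact match of normalizations'' to future work, the proof is incomplete at its technical heart.

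Two smaller points. First, $\Lie\mathcal{T}^{0}$ is a locally free sheaf of rank $d$, so ``the local multiplicity of the divisor $-\divis(\Lie\mathcal{T}^{0})$ at $v$'' is not intrinsically defined; you need a reference lattice. The paper supplies one by choosing a global invariant top-degree form $\omega\in\det(\Lie T)^{\ast}(K)$ and writing $f(Y)/2$ as a difference of two sums of local lengths measured against $\omega\Order_{v'}$. Second, after the local comparison you still need the global input that the second sum vanishes; the paper gets this from $\mathcal{T}'^{0}\cong\Gm^{d}$ over $S'$, whence $\deg\Lie(\mathcal{T}'^{0})=0$. With the Chai--Yu citation inserted and the degree computation organized via a reference section as above, your first route becomes exactly the paper's proof.
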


\begin{proof}
	Recall that $f(Y) = \sum_{v \in S} \deg(v) f(Y|_{D_{v}})$.
	Let $K'$ be a finite Galois extension of $K$ that trivializes $Y$.
	Let $k'$ be the field of constants of $K'$.
	Let $S'$ be the normalization of $S$ in $K'$, 
	and $\mathcal{T}'^{0}$ the connected N\'eron model over $S'$
	of $T \times_{K} K'$.
	For each $v \in S$, fix a place $v'$ of $S'$ above $v$.
	By \cite[Theorem (12.1)]{CY01},
	we have
		\[
				\frac{f(Y|_{D_{v}})}{2}
			=
				\frac{1}{e_{v' / v}}
				\length_{\Order_{v'}}
					\frac{
						\Lie(\mathcal{T}'^{0}) \tensor_{\Order_{S'}} \Order_{v'}
					}{
						\Lie(\mathcal{T}^{0}) \otimes_{\Order_{S}} \Order_{v'}
					},
		\]
	where $e_{v' / v}$ is the ramification index of $\Order_{v'} / \Order_{v}$
	and $\length_{\Order_{v'}}$ denotes the length of $\Order_{v'}$-modules.
	The length of the cokernel of a full rank embedding of finite free modules
	is invariant under taking the top exterior power and inverts when taking 
	duals.
	Hence the right-hand side is equal to
		\[
			\frac{1}{e_{v' / v}}
			\length_{\Order_{v'}}
				\frac{
					\det(\Lie(\mathcal{T}^{0}))^{\ast} \otimes_{\Order_{S}} \Order_{v'}
				}{
					\det(\Lie(\mathcal{T}'^{0}))^{\ast} \tensor_{\Order_{S'}} \Order_{v'}
				},
		\]
	where $\det$ denotes the top exterior power and $\ast$ denotes the dual line bundle.

	Let $\omega \in \det(\Lie(T))^{\ast}(K)$ be
	a non-zero invariant top degree differential form on $T / K$.
	Then $f(Y) / 2$ can be written as the number
		\begin{equation} \label{eq: T part of conductor formula}
				\sum_{v \in S}
					\frac{\deg(v)}{e_{v' / v}}
					\length_{\Order_{v'}}
						\frac{
							\det(\Lie(\mathcal{T}^{0}))^{\ast} \otimes_{\Order_{S}} \Order_{v'}
						}{
							\omega \Order_{v'}
						}
		\end{equation}
	minus the number
		\begin{equation} \label{eq: T prime part of conductor formula}
				\sum_{v \in S}
					\frac{\deg(v)}{e_{v' / v}}
					\length_{\Order_{v'}}
						\frac{
							\det(\Lie(\mathcal{T}'^{0}))^{\ast} \tensor_{\Order_{S'}} \Order_{v'}
						}{
							\omega \Order_{v'}
						},
		\end{equation}
	where the length of $L / M$ for two finite free $\Order_{v'}$-modules $L \subset M$
	means the negative of the length of $M / L$.
	The number \eqref{eq: T part of conductor formula} is equal to
		\[
				\sum_{v \in S}
					\deg(v)
					\length_{\Order_{v}}
						\frac{
							\det(\Lie(\mathcal{T}^{0}))^{\ast} \otimes_{\Order_{S}} \Order_{v}
						}{
							\omega \Order_{v}
						}
			=
				- \deg(\det(\Lie(\mathcal{T}^{0}))).
		\]
	Similarly the number \eqref{eq: T prime part of conductor formula} is equal to
		\begin{align*}
			&
					\frac{[k' : k]}{[K' : K]}
					\sum_{v' \in S'}
						\deg(v')
						\length_{\Order_{v'}}
							\frac{
								\det(\Lie(\mathcal{T}'^{0}))^{\ast} \otimes_{\Order_{S'}} \Order_{v'}
							}{
								\omega \Order_{v'}
							}
			\\
			&	=
					- \frac{[k' : k]}{[K' : K]}
					\cdot
					\deg(\det(\Lie(\mathcal{T}'^{0}))),
		\end{align*}
	where the degree is relative to the field of constants $k'$ of $K'$.
	The group $\mathcal{T}'^{0}$ is a finite product of copies of $\Gm$ over $S'$.
	Hence the degree of its Lie algebra is zero.
	This proves the proposition.
\end{proof}

\begin{Prop} \label{prop: functional equation for torus determined}
	\[
			L(T, 1 - s)
		=
			q^{- \chi(S, \Lie(\mathcal{T}^{0})) (2 s - 1)} \cdot
			L(Y, s).
	\]
\end{Prop}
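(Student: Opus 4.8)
The plan is to derive this purely by combining results already in hand: the shape of the functional equation \eqref{eq: functional equation for torus}, the positivity of its sign from Proposition \ref{prop: sing of functional equation}, the conductor formula from Proposition \ref{prop: conductor formula to torus}, and the Riemann--Roch theorem on the curve $S$. There is no genuine analytic input left; everything is bookkeeping about the exponent of $q$.

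First I would invoke Proposition \ref{prop: sing of functional equation} to replace the ambiguous sign in \eqref{eq: functional equation for torus} by $+1$, so that
\[
	L(T, 1-s) = q^{((2g-2)d + f(Y))(s - 1/2)} \cdot L(Y, s).
\]
Then I would rewrite $s - 1/2 = (2s-1)/2$, so the exponent of $q$ becomes $\bigl((g-1)d + \tfrac{f(Y)}{2}\bigr)(2s-1)$. Applying Proposition \ref{prop: conductor formula to torus}, which gives $f(Y)/2 = -\deg(\Lie \mathcal{T}^{0})$, the exponent is $\bigl((g-1)d - \deg(\Lie \mathcal{T}^{0})\bigr)(2s-1)$.

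The remaining point is to identify $(g-1)d - \deg(\Lie \mathcal{T}^{0})$ with $-\chi(S, \Lie \mathcal{T}^{0})$. Since $\mathcal{T}^{0}$ is smooth over $S$ of relative dimension $d = \dim T$, the sheaf $\Lie \mathcal{T}^{0}$ is locally free of rank $d$, so Riemann--Roch on $S$ yields $\chi(S, \Lie \mathcal{T}^{0}) = \deg(\Lie \mathcal{T}^{0}) + d(1-g)$, equivalently $-\chi(S, \Lie \mathcal{T}^{0}) = (g-1)d - \deg(\Lie \mathcal{T}^{0})$. Substituting this into the exponent gives exactly $q^{-\chi(S, \Lie \mathcal{T}^{0})(2s-1)}$, which is the desired formula.

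I do not expect any step to be a real obstacle; the only thing to watch is the normalization, namely that \eqref{eq: functional equation for torus} is stated with $s - 1/2$ while the target is phrased with $2s - 1$, and that the factor of $2$ relating the Artin-conductor contribution $(2g-2)d$ to $(g-1)d$ is handled consistently. As a sanity check one can verify the formula at $s = 1/2$, where both sides reduce to $L(T,1/2) = L(Y,1/2)$, consistent with the exponent vanishing there.
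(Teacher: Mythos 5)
Your proposal is correct and is exactly the paper's argument: combine the functional equation \eqref{eq: functional equation for torus} with Propositions \ref{prop: sing of functional equation} and \ref{prop: conductor formula to torus} and the Riemann--Roch identity $\chi(S, \Lie \mathcal{T}^{0}) = (1-g)d + \deg(\Lie \mathcal{T}^{0})$. The bookkeeping with the factor $s - 1/2 = (2s-1)/2$ is handled correctly, so nothing further is needed.
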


\begin{proof}
	This follows from \eqref{eq: functional equation for torus},
	Propositions \ref{prop: sing of functional equation}
	and \ref{prop: conductor formula to torus},
	and the Riemann-Roch formula
		\[
				\chi(S, \Lie(\mathcal{T}^{0}))
			=
				(1 - g) d + \deg(\Lie(\mathcal{T}^{0})).
		\]
\end{proof}

Recall from Corollary \ref{weighted} that
$r_{T} = - r = - \rank Y(K)$.

\begin{Thm} \label{thm: L value formula for torus}
	\[
			\lim_{s \to 1} \frac{L(T, s)}{(s - 1)^{r_T}}
		=
				\chi_{W}(S, \mathcal{T}^{0})^{-1}
			\cdot
				q^{\chi(S, \Lie \mathcal{T}^{0})}
			\cdot
				(\log q)^{r_T}.
	\]
\end{Thm}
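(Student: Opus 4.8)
The plan is to combine the functional equation from Proposition 4.7 with the $L$-value formula for the $\Z$-constructible sheaf $\mathcal{Y} = j_{\ast} Y$ (Theorem 3.2 / Proposition 3.6), and then translate the answer across the duality of Proposition 2.12. First I would apply Proposition 4.7, which gives
	\[
			L(T, 1 - s)
		=
			q^{- \chi(S, \Lie(\mathcal{T}^{0})) (2 s - 1)} \cdot
			L(Y, s),
	\]
and evaluate both sides near $s = 0$, equivalently near $1 - s = 1$. Since $L(Y,s) = L(j_{\ast}Y, s)$ has a pole of order $r_{\mathcal{Y}} = r = \rank Y(K)$ at $s = 0$ by Theorem 3.2 together with the rank computation $r_{\mathcal{Y}} = r$ implicit in Corollary 2.10 (note $r_Y = r$ there), it follows at once that $L(T, s)$ has a zero of order $r = -r_T$ at $s = 1$, matching the exponent $(s-1)^{r_T}$ in the statement.

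Next I would extract the leading coefficients. Writing $s' = 1 - s$ so that $s \to 0$ corresponds to $s' \to 1$, the exponential prefactor $q^{-\chi(S,\Lie\mathcal{T}^0)(2s-1)}$ tends to $q^{\chi(S,\Lie\mathcal{T}^0)}$ as $s \to 0$, and $s = 1 - s'$ gives $s^{r} = (1-s')^{r} = (-1)^{r}(s'-1)^{r}$ up to higher-order terms. Combining this with Theorem 3.2 applied to $\mathcal{Z} = j_{\ast}Y$, whose sign is $(-1)^{\rank (j_{\ast}Y)(K)} = (-1)^{\rank Y(K)} = (-1)^r$ by Proposition 3.6, the two factors of $(-1)^r$ cancel and one obtains
	\[
			\lim_{s' \to 1} \frac{L(T, s')}{(s' - 1)^{r_T}}
		=
			\chi_{W}(S, j_{\ast}Y) \cdot q^{\chi(S,\Lie\mathcal{T}^0)} \cdot (\log q)^{-r}.
	\]
Here one must be careful that $r_T = -r$, so $(\log q)^{-r_{\mathcal{Y}}} = (\log q)^{-r} = (\log q)^{r_T}$, which is exactly the power appearing in the target formula; I would check this bookkeeping explicitly.

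Finally, the only remaining discrepancy is that the formula above involves $\chi_W(S, j_{\ast}Y)$ while the theorem is stated in terms of $\chi_W(S, \mathcal{T}^0)^{-1}$. Here I would invoke the duality package: Proposition 2.12 gives $\chi_W(S, \mathcal{T}^0) = \chi_W(S, \Tilde{\mathcal{Y}})^{-1}$, and it remains to compare $\chi_W(S, \Tilde{\mathcal{Y}})$ with $\chi_W(S, j_{\ast}\mathcal{Y})$. Since $\Tilde{\mathcal{Y}} = \tau_{\le 1}Rj_{\ast}Y$ differs from $\mathcal{Y} = j_{\ast}Y = \tau_{\le 0}Rj_{\ast}Y$ only by the sheaf $R^1 j_{\ast}Y$ in degree $1$, which is supported on finitely many closed points and is torsion (as noted in the Lemma preceding Proposition 2.8), Step 3 of the proof of Theorem 3.2 shows that the Weil-\'etale Euler characteristic of such a sheaf is trivial, hence $\chi_W(S, \Tilde{\mathcal{Y}}) = \chi_W(S, j_{\ast}Y)$; combined with Proposition 2.12 this yields $\chi_W(S, j_{\ast}Y) = \chi_W(S, \mathcal{T}^0)^{-1}$, completing the proof. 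I expect this last identification — matching the two different integral models of the character module at the level of Euler characteristics, and confirming that $L(Y,s) = L(j_{\ast}Y, s)$ really is the $L$-function governed by $\Tilde{\mathcal{Y}}$ rather than by some other truncation — to be the main technical obstacle, since it requires keeping precise track of which torsion contributions are invisible to $\chi_W$ and which rank contributions survive.
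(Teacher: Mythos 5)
Your proposal is correct and follows essentially the same route as the paper's proof: the functional equation of Proposition \ref{prop: functional equation for torus determined}, the $L$-value formula for $j_{*}Y$ (Theorem \ref{thm: L value formula for Z constructible sheaf} with the sign from Proposition \ref{prop: sign of L value for Z constructible sheaf at zero}), and the identification $\chi_{W}(S,\mathcal{T}^{0})^{-1}=\chi_{W}(S,\Tilde{\mathcal{Y}})=\chi_{W}(S,j_{*}Y)$ via Proposition \ref{prop: duality and Euler characteristic} and the constructibility of $R^{1}j_{*}Y$. Two harmless slips: $L(T,s)$ has a \emph{pole} of order $r$ (equivalently a zero of order $r_{T}=-r$) at $s=1$, not a zero of order $r$ --- your subsequent bookkeeping is consistent with the correct statement --- and the triviality of $\chi_{W}$ for the constructible sheaf $R^{1}j_{*}Y$ is Step 2, not Step 3, of the proof of Theorem \ref{thm: L value formula for Z constructible sheaf}.
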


\begin{proof}
	We have
		\[
				\chi_{W}(S, \mathcal{T}^{0})^{-1}
			=
				\chi_{W}(S, \Tilde{\mathcal{Y}})
			=
				\chi_{W}(S, \mathcal{Y})
		\]
	by Proposition \ref{prop: duality and Euler characteristic}
	and the constructibility of $R^{1} j_{\ast} Y$.
	Hence the result follows from
	Theorem \ref{thm: L value formula for Z constructible sheaf},
	Propositions \ref{prop: sign of L value for Z constructible sheaf at zero}
	and \ref{prop: functional equation for torus determined}.
\end{proof}

%%%%%%%%%%%%%%%%%%%%%%%%%%%%%%%%%%%%%%%%%%%%%%%%%%%%%%%%%%%%%%%%%%%%%%%%%%%%%

\section{Calculations of the Weil-\'etale Euler characteristic}

The goal of this section is to express the Weil-\'etale Euler characteristic
$\chi_{W}(S, \mathcal{T}^{0})$ 
in terms of classical invariants. Define
	\[
			\Phi_{T}(k)
		:=
			\bigoplus_{v} \pi_{0}(\mathcal{T}_{v})(k(v))
	\]
and let
	\[
		T(\Adele_{K})=\dirlim_{U\subseteq S}\;\;\prod_{v\in U}
		\mathcal{T}^{0}(\Order_{v})\times \prod_{v\not \in U}T(K_v)
	\]
be the restricted direct product of $T(K_v)$ with respect to the connected
components $\mathcal{T}^{0}(\Order_{v})$.  
For $i_{v} \colon v \into \Order_{v}$ the inclusion,
we have an exact sequence
		\begin{equation} \label{eq: connected etale over local ring for torus}
				0
			\to
				\mathcal{T}^{0}
			\to
				\mathcal{T}
			\to
				i_{v, \ast} \pi_{0}(\mathcal{T}_{v})
			\to
				0
		\end{equation}
of fppf sheaves over $\Order_{v}^{h}$.
Taking sections over $\Order_{v}$, we obtain an exact sequence
	\begin{equation}\label{ttorus}
			0
		\to
			\mathcal{T}^{0}(\Order_{v})
		\to
			T(K_{v})
		\to
			\pi_{0}(\mathcal{T}_{v})(k(v))
		\to
			0.
	\end{equation}
because $\mathcal{T}(\Order_{v})=T(K_v)$ and 
$H^{n}(\Order_{v}, \mathcal{T}^{0}) \cong H^{n}(k(v), \mathcal{T}_{v}^{0}) = 0$
for $n\geq 1$ by \cite[Chapter III, Remark 3.11 (b)]{Mil80} and Lang's theorem.
We also have an exact sequence
	\begin{equation} \label{eq: connected etale for tori, henselian points}
			0
		\to
			\mathcal{T}^{0}(\Order_{v}^{h})
		\to
			T(K_{v}^{h})
		\to
			\pi_{0}(\mathcal{T}_{v})(k(v))
		\to
			0
	\end{equation}
since $\mathcal{T}(\Order_{v}^{h})=T(K_v^{h})$ and 
$H^{n}(\Order_{v}^{h}, \mathcal{T}^{0}) \cong H^{n}(k(v), \mathcal{T}_{v}^{0}) = 0$
similarly.
Taking the restricted direct product of \eqref{ttorus}, we obtain an exact sequence
	\[
			0
		\to
			\mathcal{T}^{0}(\Order_{\Adele_{K}})
		\to
			T(\Adele_{K})
		\to
			\Phi_{T}(k)
		\to
			0.
	\]
Define $\Cl(\mathcal{T}^{0})$ to be the quotient
	\[
			\frac{T(\Adele_{K})}{T(K) + \mathcal{T}^{0}(\Order_{\Adele_{K}})}
		\cong
			\frac{\Phi_{T}(k)}{T(K)},
	\]
and let 
\[
\Sha(T)=\Ker \left( H^1(K,T)\to \prod H^1(K_v,T) \right)
\]	
be the Tate-Shafarevich group of $T$. We note that the Tate-Shafarevich
group does not change if we use $H^1(K_v^h,T)$ instead of $H^1(K_v,T)$
since $H^1(K_v^h,T) \cong H^1(K_v,T)$.

\begin{Prop} \label{prop: identifying first Weil etale}
	There exist a canonical exact sequence and a canonical isomorphism
		\begin{gather*}
					0
				\to
					\Cl(\mathcal{T}^{0})
				\to
					H_{W}^{1}(S, \mathcal{T}^{0})
				\to
					\Sha(T)
				\to
					0,
			\\
					H_{W}^{1}(S, \Tilde{\mathcal{Y}})_{\tor}
				\cong
					H^{1}(K, Y).
		\end{gather*}
\end{Prop}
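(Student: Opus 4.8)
I want to identify $H^1_W(S,\mathcal T^0)$ and the torsion of $H^1_W(S,\tilde{\mathcal Y})$ by running the comparison spectral sequence relating Weil-étale to étale cohomology, together with the local-to-global sequences already set up in the section. For the first statement, recall the short exact sequence
\[
0 \to H^{0}(\closure S, \mathcal T^{0})_{W} \to H^{1}_{W}(S,\mathcal T^{0}) \to H^{1}(\closure S,\mathcal T^{0})^{W} \to 0,
\]
where subscript/superscript $W$ denotes coinvariants/invariants under $\phi$. The strategy is to show that $H^{0}(\closure S,\mathcal T^{0})_W \cong \Cl(\mathcal T^{0})$ and $H^{1}(\closure S,\mathcal T^{0})^{W} \cong \Sha(T)$. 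For the second, I would feed in the standard identification $H^{i}_W$ agrees rationally with $H^{i}\oplus H^{i-1}$ together with the étale-cohomology computation of $H^{1}(S,\tilde{\mathcal Y})$ against Galois cohomology of $Y$.

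\textbf{First exact sequence.} First I would compute $H^{0}(\closure S,\mathcal T^{0})$ and $H^{1}(\closure S,\mathcal T^{0})$ as $W$-modules. Using the exact sequence \eqref{eq: connected etale over local ring for torus} in the form $0 \to \mathcal T^{0}\to \mathcal T\to \bigoplus_v i_{v,\ast}\pi_0(\mathcal T_v)\to 0$ on $\closure S$, together with the long exact sequence for the open immersion $j\colon \Spec \closure K\into \closure S$ applied to $\mathcal T^{0}$ and $\mathcal T = j_\ast T$, one extracts that $H^{0}(\closure S,\mathcal T^{0})$ sits in an exact sequence involving $T(\closure K)$, $\bigoplus_v \pi_0(\mathcal T_v)(\closure{k(v)})$ and $H^{1}(\closure S,\mathcal T^{0})$; after taking $\phi$-(co)invariants the image of $T(\closure K)^\phi = T(K)$ inside $\bigoplus_v \pi_0(\mathcal T_v)(k(v)) = \Phi_T(k)$ is precisely what gets killed, producing $\Cl(\mathcal T^{0}) = \Phi_T(k)/T(K)$ as the coinvariants, while the $\phi$-invariants of $H^{1}(\closure S,\mathcal T^{0})$ are computed from the same diagram to be the classes in $H^{1}(K,T)$ that die in $\prod_v H^{1}(K_v^{h},T)$ — exactly $\Sha(T)$ by the parenthetical remark just before the Proposition that $H^1(K_v^h,T)\cong H^1(K_v,T)$. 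Plugging these two into the displayed short exact sequence from \cite{Gei04} gives the claimed sequence; its canonicity is inherited from the canonicity of all the sheaf sequences involved.

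\textbf{The isomorphism for $\tilde{\mathcal Y}$.} Here I would use the distinguished triangle $\tilde{\mathcal Y}\to Rj_\ast Y\to \tau_{\ge 2}Rj_\ast Y$ on $S$, noting that the last term is supported at finitely many closed points and is torsion. Taking étale cohomology, $H^{1}(S,\tilde{\mathcal Y})$ agrees with $H^{1}(S,Rj_\ast Y) = H^{1}(K,Y)$ up to finite groups coming from the local terms $H^{0}$ of $R^{1}j_\ast Y$; more precisely, by the preceding Lemma all cohomology of $\tilde{\mathcal Y}$ other than $H^{0}$ is torsion, so $H^{1}(S,\tilde{\mathcal Y})$ is finite, and one must show it is canonically $H^{1}(K,Y)$. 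Combining this with the comparison sequence $0\to H^{0}(S,\tilde{\mathcal Y})\otimes?\to$... — more carefully, from $0\to H^{i-1}(\closure S,\tilde{\mathcal Y})_W\to H^{i}_W(S,\tilde{\mathcal Y})\to H^{i}(\closure S,\tilde{\mathcal Y})^W\to 0$ with $i=1$: the term $H^{0}(\closure S,\tilde{\mathcal Y})_W = Y(K)$ is free (contributing to $\mathrm{rank}$, not torsion, and here one uses that $H^0$ of the geometric point is finitely generated free so its coinvariants have no torsion beyond what splits off), and the term $H^{1}(\closure S,\tilde{\mathcal Y})^W$ is finite and maps isomorphically onto the torsion of $H^{1}_W$; identifying $H^{1}(\closure S,\tilde{\mathcal Y})^{W}$ via the Hochschild–Serre/descent comparison with $H^{1}(K,Y)$ finishes it.

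\textbf{Main obstacle.} The delicate point is bookkeeping with the two local-to-global mechanisms simultaneously: making sure that the map $T(K)\to \Phi_T(k)$ extracted from the geometric sheaf sequence, after taking $\phi$-coinvariants, is literally the map defining $\Cl(\mathcal T^{0})$ (rather than off by a sign or a boundary shift), and dually that the $\phi$-invariants of $H^{1}(\closure S,\mathcal T^{0})$ identify with $\Sha(T)$ with its standard definition via $K_v^{h}$ — this requires carefully chasing the commuting diagram of long exact sequences for $\closure S$ versus each $\Spec \closure{\Order_v^{sh}}$, which is where I expect essentially all the work to be. The $\tilde{\mathcal Y}$ statement is comparatively routine once the truncation triangle is in hand.
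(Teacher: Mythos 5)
Your plan hinges on identifying the two graded pieces of the descent sequence $0 \to H^{0}(\closure{S}, \mathcal{T}^{0})_{W} \to H_{W}^{1}(S, \mathcal{T}^{0}) \to H^{1}(\closure{S}, \mathcal{T}^{0})^{W} \to 0$ with $\Cl(\mathcal{T}^{0})$ and $\Sha(T)$ respectively, and this identification is false. Take $T = \Gm$, so $\mathcal{T}^{0} = \Gm$ over $S$: then $H^{0}(\closure{S}, \Gm) = \closure{k}^{\times}$ has vanishing $\phi$-coinvariants (since $x \mapsto x^{q-1}$ is surjective on $\closure{k}^{\times}$), whereas $\Cl(\Gm) = \Pic(S) \ne 0$; and $H^{1}(\closure{S}, \Gm)^{W} = \Pic(\closure{S})^{W}$ is infinite, whereas $\Sha(\Gm) = 0$. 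So the filtration coming from the geometric-to-arithmetic comparison simply does not match the filtration $0 \subset \Cl(\mathcal{T}^{0}) \subset H_{W}^{1}(S, \mathcal{T}^{0})$, and no amount of diagram chasing over $\closure{S}$ will produce the claimed graded pieces. The same confusion undermines your second part: $H^{0}(\closure{S}, \Tilde{\mathcal{Y}})$ is the lattice $Y(K\closure{k})$, whose $\phi$-\emph{coinvariants} (not invariants, so not $Y(K)$) can very well have torsion --- e.g.\ $Y = \Z$ twisted by the quadratic constant-field character gives coinvariants $\Z/2$ while $H^{1}(\closure{S}, \Tilde{\mathcal{Y}})^{W} = 0$ --- so the torsion of $H_{W}^{1}(S, \Tilde{\mathcal{Y}})$ is \emph{not} carried by $H^{1}(\closure{S}, \Tilde{\mathcal{Y}})^{W}$.

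The workable route stays entirely on the arithmetic side. First, the comparison sequence between \'etale and Weil-\'etale cohomology over $S$ itself gives $0 \to H^{1}(S, \mathcal{T}^{0}) \to H_{W}^{1}(S, \mathcal{T}^{0}) \to \mathcal{T}^{0}(S) \tensor \Q$, and since $\mathcal{T}^{0}(S)$ is finite this is an isomorphism $H^{1}(S, \mathcal{T}^{0}) \cong H_{W}^{1}(S, \mathcal{T}^{0})$. Then one runs the localization (cohomology with supports) sequence for $\Spec K \into S$ applied to $\mathcal{T}^{0}$: the vanishing of $H^{n}(\Order_{v}^{h}, \mathcal{T}^{0})$ for $n \ge 1$ identifies $H_{v}^{1}(\Order_{v}^{h}, \mathcal{T}^{0})$ with $\pi_{0}(\mathcal{T}_{v})(k(v))$ and $H_{v}^{2}(\Order_{v}^{h}, \mathcal{T}^{0})$ with $H^{1}(K_{v}^{h}, T)$, so the sequence reads $T(K) \to \Phi_{T}(k) \to H^{1}(S, \mathcal{T}^{0}) \to H^{1}(K, T) \to \prod_{v} H^{1}(K_{v}^{h}, T)$, which is exactly the claimed extension of $\Sha(T)$ by $\Cl(\mathcal{T}^{0})$. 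For the second statement one argues analogously that $H^{1}(S, \Tilde{\mathcal{Y}}) \to H^{1}(K, Y)$ is an isomorphism because the local cohomology with supports of $\Tilde{\mathcal{Y}}$ vanishes in degrees $\le 2$ (using $H^{n}(\Order_{v}^{sh}, \Tilde{\mathcal{Y}}) \cong H^{n}(K_{v}^{sh}, Y)$ for $n = 0, 1$ by the very definition of the truncation), and then passes to torsion via the comparison sequence. Your instinct that the local-to-global bookkeeping is where the work lies is right, but it must be done over $S$, not over $\closure{S}$.
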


\begin{proof}
	The exact sequence
		\[
				0
			\to
				H^{1}(S, \mathcal{T}^{0})
			\to
				H_{W}^{1}(S, \mathcal{T}^{0})
			\to
				\mathcal{T}^{0}(S) \tensor \Q
		\]
	and the finiteness of $\mathcal{T}^{0}(S)$
	(Proposition \ref{prop: finiteness and freeness for tori and lattices})
	shows $H^{1}(S, \mathcal{T}^{0})\cong H_{W}^{1}(S, \mathcal{T}^{0})$.
	The localization sequence gives an exact sequence
		\[
				T(K)
			\to
				\bigoplus_{v}
					H_{v}^{1}(\Order_{v}^{h}, \mathcal{T}^{0})
			\to
				H^{1}(S, \mathcal{T}^{0})
			\to
				H^{1}(K, T)
			\to
				\bigoplus_{v}
					H_{v}^{2}(\Order_{v}^{h}, \mathcal{T}^{0}).
		\]
	Now consider the analogous sequence for $\Spec \Order_{v}^{h}$. 
	The vanishing of $H^{n}(\Order_{v}^{h}, \mathcal{T}^{0})$ for
	$n\geq 1$ gives an isomorphism
	$H^{1}(K_{v}^{h}, T)\cong H_{v}^{2}(\Order_{v}^{h}, \mathcal{T}^{0})$
	as well as a short exact sequence
		\[
				0
			\to
				\mathcal{T}^{0}(\Order_{v}^{h})
			\to
				T(K_{v}^{h})
			\to
			H_{v}^{1}(\Order_{v}^{h}, \mathcal{T}^{0})
			\to
				0,
		\]
	which implies $H_{v}^{1}(\Order_{v}^{h}, \mathcal{T}^{0})\cong 
	\pi_{0}(\mathcal{T}_{v})(k(v))$ by comparing to 
	\eqref{eq: connected etale for tori, henselian points}.
	
	For the isomorphism, we use that
	$H^{1}(S, \Tilde{\mathcal{Y}})_{\tor}
	\isomto H_{W}^{1}(S, \Tilde{\mathcal{Y}})_{\tor}$
	as above. Hence it is enough show that the natural map
	$H^{1}(S, \Tilde{\mathcal{Y}}) \to H^{1}(K, Y)$ is an isomorphism,
	which follows from $H_{v}^{n}(\Order_{v}^{h}, \Tilde{\mathcal{Y}}) = 0$
	for $n \le 2$. For this, it is enough to show that
	$H_{v}^{n}(\Order_{v}^{sh}, \Tilde{\mathcal{Y}}) = 0$
	for $n \le 2$ which follows because
		\[
				H^{n}(\Order_{v}^{sh}, \Tilde{\mathcal{Y}})
			\cong
				\begin{cases}
						H^{n}(K_{v}^{sh}, Y)
					&
						n = 0,1;
					\\
						0
					&
						n \ne 0, 1.
				\end{cases}
		\]
by definition of $\Tilde{\mathcal{Y}}$.
\end{proof}

Denote the first map in the exact sequence
in Proposition \ref{prop: identifying first Weil etale}
by $\cl_{T}$:
	\[
			\cl_{T}
		\colon
			\Cl(\mathcal{T}^{0})
		\into
			H_{W}^{1}(S, \mathcal{T}^{0}).
	\]
As seen in the proof of Proposition \ref{prop: identifying first Weil etale}, 
$\cl_{T}$ is the composite of the natural maps
	\[
			\frac{T(\Adele_{K})}{T(K) + \mathcal{T}^{0}(\Order_{\Adele_{K}})}
		\isomto
			\frac{
				\bigoplus_{v}
					H_{v}^{1}(\Order_{v}^{h}, \mathcal{T}^{0})
			}{
				T(K)
			}
		\into
			H^{1}(S, \mathcal{T}^{0})
		\isomto
			H_{W}^{1}(S, \mathcal{T}^{0}).
	\]
We denote the map induced on the torsion-free quotients by $\cl_{T} / \tor$:
	\begin{gather*}
				\cl_{T} / \tor
			\colon
				\Cl(\mathcal{T}^{0}) / \tor
			\into
				H_{W}^{1}(S, \mathcal{T}^{0})  / \tor.
	\end{gather*}
By the functoriality of class groups $\Cl$ 
and connected N\'eron models, we have a natural map
	\begin{equation} \label{eq: Cl times Y functoriality pairing}
			Y(K)
		=
			\Hom_{K}(T, \Gm)
		\cong
			\Hom_{S}(\mathcal{T}^{0}, \Gm)
		\to
			\Hom(\Cl(\mathcal{T}^{0}), \Cl(\Gm)).
	\end{equation}
Hence we have a canonical pairing
	\begin{equation} \label{eq: height pairing for tori}
			h_{T}
		\colon
			\Cl(\mathcal{T}^{0}) \times Y(K)
		\to
			\Cl(\Gm)
		=
			\Pic(S)
		\stackrel{\deg}{\to}
			\Z.
	\end{equation}
Recall the isomorphism $Y(K) \cong H_{W}^{0}(S, \Tilde{\mathcal{Y}})$
and the morphism $e \colon H_{W}^{0}(S, \Tilde{\mathcal{Y}}) \to H_{W}^{1}(S, \Tilde{\mathcal{Y}})$.
We denote the composite
$Y(K) \to H_{W}^{1}(S, \Tilde{\mathcal{Y}})$ by $e$ by abuse of notation.

\begin{Prop} \label{prop: regulator comparison for tori}
	The composite
		\[
				\Cl(\mathcal{T}^{0}) \times Y(K)
			\stackrel{\cl_{T} \times e}{\longrightarrow}
				H_{W}^{1}(S, \mathcal{T}^{0}) \times H_{W}^{1}(S, \Tilde{\mathcal{Y}})
			\to
				\Z,
		\]
	where the last (perfect) pairing is the pairing of 
	Corollary \ref{cor: global duality for tori, each term},
	agrees with $h_{T}$.
\end{Prop}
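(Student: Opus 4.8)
The plan is to trace both pairings back to a common source on the local rings $\Order_v^h$ and to the duality between $T$ and $Y$ that underlies everything. Recall that $\cl_T$ factors through $\bigoplus_v H^1_v(\Order_v^h,\mathcal{T}^0)/T(K) \to H^1(S,\mathcal{T}^0) \isomto H^1_W(S,\mathcal{T}^0)$, while $e\colon Y(K)=H^0_W(S,\tilde{\mathcal Y}) \to H^1_W(S,\tilde{\mathcal Y})$ is cup product with the canonical class $e\in H^1_W(k,\Z)$. So the composite pairing in the statement is, by associativity of cup product and of the duality pairing of Theorem \ref{thm: global duality for tori}, the image under the boundary/localization maps of a pairing computed purely in terms of the local cohomology groups $H^1_v(\Order_v^h,\mathcal{T}^0)\cong\pi_0(\mathcal{T}_v)(k(v))$ paired against $Y(K)$. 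First I would set up precisely this factorization, using the compatibility of the global duality pairing (which comes from \eqref{ppaaiirr}, i.e.\ from the pairing $\mathcal{T}^0\tensor^L\tilde{\mathcal Y}\to\Gm$) with localization, so that the whole computation is reduced to: for each place $v$, the pairing $\pi_0(\mathcal{T}_v)(k(v))\times Y(K)\to\Z$ obtained by composing the local duality of \cite[Theorem B]{Suz19} with $e$ equals the local component of $h_T$.

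Next I would identify that local component. The map $Y(K)=\Hom_S(\mathcal{T}^0,\Gm)\to\Hom(\Cl(\mathcal{T}^0),\Pic(S))$ of \eqref{eq: Cl times Y functoriality pairing}, composed with $\deg$, is by construction compatible with the local decomposition $\Cl(\mathcal{T}^0)\cong\bigoplus_v\pi_0(\mathcal{T}_v)(k(v))/T(K)$: a character $y\colon\mathcal{T}^0\to\Gm$ sends the component at $v$ via the induced map $\pi_0(\mathcal{T}_v)\to\pi_0((\Gm)_v)=\Z$ on component groups, and then $\deg v$ enters because $H^1_v(\Order_v^h,\Gm)\cong\Z$ contributes $\deg v$ to $\Pic(S)\stackrel{\deg}{\to}\Z$. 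On the other side, the duality pairing on $H^1_W$ evaluated on a class supported at $v$ is, by the local-global compatibility already used in the proof of Proposition \ref{thesame}, the local pairing $H^1_v(\Order_v^h,\mathcal{T}^0)\times H^1_W(S,\tilde{\mathcal Y})\to\Z$ followed by the trace/sum over $v$ with weight $\deg v$; and cup product with $e$ on $Y(K)=H^0_W(S,\tilde{\mathcal Y})$ corresponds under $i_v^*$ to cup product with $e_v\in H^1_{W_v}(k(v),\Z)$ up to the factor $\deg v$. So the two local pairings agree up to the same $\deg v$ factors, which cancel consistently.

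The key steps in order: (1) reduce to a statement local at each $v$ via compatibility of global duality with the localization sequence, using that $\cl_T$ is built from the $H^1_v(\Order_v^h,\mathcal{T}^0)$; (2) identify $H^1_v(\Order_v^h,\mathcal{T}^0)\cong\pi_0(\mathcal{T}_v)(k(v))$ and, dually, identify the relevant piece of $H^1_W(S,\tilde{\mathcal Y})$ receiving $e(Y(K))$, invoking \cite[Theorem B]{Suz19} and the triangle from the proof of Proposition \ref{thesame}; (3) match the resulting local pairing $\pi_0(\mathcal{T}_v)(k(v))\times Y(K)\to\Z$ with the map on component groups $\pi_0(\mathcal{T}_v)\to\pi_0((\Gm)_v)=\Z$ induced by $y\in Y(K)=\Hom_S(\mathcal{T}^0,\Gm)$, which is exactly the local component of $h_T$; (4) bookkeep the $\deg v$ normalization factors (between $e$ and $e_v$, and between $\Pic(S)\stackrel{\deg}\to\Z$ and the local contribution) to check they cancel. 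The main obstacle I expect is step (3): carefully checking that the \cite{Suz19} local duality pairing, when restricted to the component-group part and composed with $e_v$, is literally the functorial map on $\pi_0$ induced by a cocharacter, rather than its negative or a twist. This is a diagram-chase through the morphism of triangles in the proof of Proposition \ref{thesame} combined with the explicit description of local Tate-type duality for $\Gm$ over a henselian discrete valuation ring, where $H^1_v(\Order_v^h,\Gm)\cong\Z$ and $\pi_0((\Gm)_v)\cong\Z$ are canonically identified; getting the sign and the identification to be the obvious ones is the delicate point, and I would pin it down by reducing to $T=\Gm$ itself, where $h_{\Gm}$ is the composite $\Cl(\Gm)\times\Z\to\Pic(S)\stackrel{\deg}\to\Z$ tautologically, and the duality pairing is the one in \cite{Gei12} whose compatibility with \eqref{ppaaiirr} was recorded in Theorem \ref{thm: global duality for tori}.
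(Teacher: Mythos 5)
Your proposal is correct in substance but packages the argument quite differently from the paper, and the paper's packaging is worth contrasting because it makes your ``delicate point'' evaporate. The paper pairs with the character first: for $y \in Y(K) = H^{0}(S, \Tilde{\mathcal{Y}})$, cupping against $y$ under \eqref{ppaaiirr} is literally the map induced by the morphism $\mathcal{T}^{0} \to \Gm$ extending $y$, so naturality applied to the coboundary diagram
$T(K) \to \bigoplus_{v} H_{v}^{1}(\Order_{v}^{h}, \mathcal{T}^{0})$ versus $K^{\times} \to \bigoplus_{v} H_{v}^{1}(\Order_{v}^{h}, \Gm)$ identifies the induced pairing $\Cl(\mathcal{T}^{0}) \times Y(K) \to \Cl(\Gm) = \Pic(S)$ on cokernels with the tautological functorial pairing \eqref{eq: Cl times Y functoriality pairing}; only then is $e$ inserted, and the entire comparison collapses to the single global fact that $\cup\, e \colon H_{W}^{1}(S, \Gm) \to H_{W}^{2}(S, \Gm) \cong \Z$ restricted to $\Pic(S)$ is $\deg$ (geometric connectivity of $S$ over $k$). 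By keeping $\Pic(S)$ as the intermediate target instead of $\bigoplus_{v} \Z$, the paper never needs the local duality of \cite[Theorem B]{Suz19} here (that input was already consumed in Proposition \ref{thesame}), never has to identify a sign in a pairing $\pi_{0}(\mathcal{T}_{v})(k(v)) \times Y(K) \to \Z$, and never does the $\deg(v)$ bookkeeping between $e$ and $e_{v}$. Your local route can be completed --- the local pairing you want is indeed the map on component groups induced by $y$, by the same functoriality, and the $\deg(v)$ factors match rather than cancel --- but if you do go place by place you must also observe that the sum of local contributions vanishes on the image of $T(K)$ (the statement that principal divisors have degree zero), which is what lets the local description descend to $\Cl(\mathcal{T}^{0})$; the paper gets this for free by working with cokernels throughout.
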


\begin{proof}
	The pairing $\mathcal{T}^{0} \tensor^{L} \Tilde{\mathcal{Y}} \to \Gm$ over $S$ in \eqref{ppaaiirr}
	induces a commutative diagram
		\[
			\begin{CD}
					T(K) \times Y(K)
				@>>>
					K^{\times}
				\\ @VVV @VVV \\
					\bigoplus_{v} H_{v}^{1}(\Order_{v}^{h}, \mathcal{T}^{0}) \times Y(K)
				@> \cup >>
					\bigoplus_{v} H_{v}^{1}(\Order_{v}^{h}, \Gm),
			\end{CD}
		\]
	where the vertical maps are coboundary maps of localization sequences.
	On the cokernels of the vertical maps, this diagram induces a pairing
		\[
				\Cl(\mathcal{T}^{0}) \times Y(K)
			\to
				\Cl(\Gm).
		\]
	This agrees with the map \eqref{eq: Cl times Y functoriality pairing}.
	Hence the naturality of the cup product shows that the diagram
		\[
			\begin{CD}
					\Cl(\mathcal{T}^{0}) \times Y(K)
				@>>>
					\Cl(\Gm) = \Pic(S)
				@>>>
					\Z
				\\ @VV \cl_T \times \id V @| @| \\
					H_{W}^{1}(S, \mathcal{T}^{0}) \times Y(K)
				@> \cup >>
					H_{W}^{1}(S, \Gm) = \Pic(S)
				@>>>
					\Z
			\end{CD}
		\]
	is commutative, where the upper horizontal pairing is $h_{T}$.
	Also consider the commutative diagram
		\[
			\begin{CD}
					H_{W}^{1}(S, \mathcal{T}^{0}) \times Y(K)
				@> \cup >>
					H_{W}^{1}(S, \Gm) = \Pic(S)
				@>>>
					\Z
				\\ @VV \id \times e V @VV e V @| \\
					H_{W}^{1}(S, \mathcal{T}^{0}) \times H_{W}^{1}(S, \Tilde{\mathcal{Y}})
				@> \cup >>
					H_{W}^{2}(S, \Gm)
				@=
					\Z,
			\end{CD}
		\]
	where the commutativity of the right square follows from the 
	geometric connectivity of $S$ over $k$. Combining these two diagrams,
	we get the result.
\end{proof}

With these preparations we can determine the Euler characteristic 
of the torus.

\begin{Prop} \label{prop: Weil etale Euler char for torus, free part}
	\[
			\chi(H_{W}^{\ast}(S, \mathcal{T}^{0}) / \tor, e)^{-1}
		=
			\frac{\# \Coker(\cl_{T} / \tor)}{\Disc(h_{T})}.
	\]
\end{Prop}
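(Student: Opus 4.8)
The plan is to read off $\chi(H_W^\ast(S,\mathcal{T}^0)/\tor, e)$ from the shape of the complex and then translate the answer to the lattice side using Corollary~\ref{cor: global duality for tori, each term} and Proposition~\ref{prop: regulator comparison for tori}. First I would note that after killing torsion the complex $(H_W^\ast(S,\mathcal{T}^0)/\tor, e)$ is concentrated in degrees $1$ and $2$: the group $H_W^0(S,\mathcal{T}^0)$ is finite and $H_W^3(S,\mathcal{T}^0)=0$ by the table preceding Corollary~\ref{weighted}, while $H_W^1(S,\mathcal{T}^0)/\tor$ and $H_W^2(S,\mathcal{T}^0)/\tor$ are free of rank $r=\rank Y(K)$ by Proposition~\ref{prop: finiteness and freeness for tori and lattices}. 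Since the complex is exact after $\tensor\Q$ \cite[Corollary 5.2]{Gei04}, the differential $e\colon H_W^1(S,\mathcal{T}^0)/\tor\to H_W^2(S,\mathcal{T}^0)/\tor$ is injective with finite cokernel, so the Euler characteristic of the complex is $\chi(H_W^\ast(S,\mathcal{T}^0)/\tor, e)=\#\Coker\bigl(e\colon H_W^1(S,\mathcal{T}^0)/\tor\to H_W^2(S,\mathcal{T}^0)/\tor\bigr)$.

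Next I would transpose this cokernel to the $\Tilde{\mathcal{Y}}$ side. The commutative square coming from associativity of the cup product in the proof of Proposition~\ref{prop: duality and Euler characteristic} shows that, under the perfect pairings of Corollary~\ref{cor: global duality for tori, each term}, the map $e\colon H_W^1(S,\mathcal{T}^0)/\tor\to H_W^2(S,\mathcal{T}^0)/\tor$ is the $\Z$-linear transpose of $e\colon Y(K)=H_W^0(S,\Tilde{\mathcal{Y}})\to H_W^1(S,\Tilde{\mathcal{Y}})/\tor$ (here $H_W^0(S,\Tilde{\mathcal{Y}})=Y(K)$ is already torsion-free). The latter map is again injective with finite cokernel, by the same rational-exactness argument applied to $\Tilde{\mathcal{Y}}$ together with Proposition~\ref{prop: finiteness and freeness for tori and lattices}. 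Since the transpose of an injection between free $\Z$-modules of equal finite rank has cokernel of the same order, I would conclude $\chi(H_W^\ast(S,\mathcal{T}^0)/\tor, e)=\#\Coker\bigl(e\colon Y(K)\to H_W^1(S,\Tilde{\mathcal{Y}})/\tor\bigr)$.

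Then I would bring in the regulator comparison. The group $\Cl(\mathcal{T}^0)$ has rank $r$, since it embeds into $H_W^1(S,\mathcal{T}^0)$ with finite cokernel $\Sha(T)$ by Proposition~\ref{prop: identifying first Weil etale} ($\Sha(T)$ being finite for tori over global function fields); hence $\Cl(\mathcal{T}^0)/\tor$ and $Y(K)$ are free of rank $r$, and both $\cl_T/\tor$ and $e\colon Y(K)\to H_W^1(S,\Tilde{\mathcal{Y}})/\tor$ are injective with finite cokernel. By Proposition~\ref{prop: regulator comparison for tori}, $h_T$ is the composite of $\cl_T/\tor\times e$ with the perfect pairing of Corollary~\ref{cor: global duality for tori, each term}; precomposing a unimodular pairing with two such maps multiplies the discriminant by the orders of the two cokernels, so
\[
\Disc(h_T)=\#\Coker(\cl_T/\tor)\cdot\#\Coker\bigl(e\colon Y(K)\to H_W^1(S,\Tilde{\mathcal{Y}})/\tor\bigr).
\]
Combining this with the previous two paragraphs yields $\chi(H_W^\ast(S,\mathcal{T}^0)/\tor, e)^{-1}=\#\Coker(\cl_T/\tor)/\Disc(h_T)$, which is the assertion.

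The substantive ingredients — the perfect duality of Corollary~\ref{cor: global duality for tori, each term} and the identification $h_T=\langle \cl_T(\var),\, e(\var)\rangle$ of Proposition~\ref{prop: regulator comparison for tori} — are already available, so I expect the main obstacle to be purely organizational: one must check carefully that the two occurrences of ``cup product with $e$'' on the torus and on the lattice side are genuine transposes of one another under the duality pairing (this is precisely the square in the proof of Proposition~\ref{prop: duality and Euler characteristic}), and that passing to torsion-free quotients everywhere is harmless because both the duality pairing and $h_T$ take values in the torsion-free group $\Z$.
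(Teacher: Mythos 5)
Your proof is correct and takes essentially the same route as the paper's: identify $e\colon H_W^1(S,\mathcal{T}^0)/\tor\to H_W^2(S,\mathcal{T}^0)/\tor$ as the only nontrivial differential, recognize it via Corollary~\ref{cor: global duality for tori, each term} and the square in the proof of Proposition~\ref{prop: duality and Euler characteristic} as the transpose of $e\colon Y(K)\to H_W^1(S,\Tilde{\mathcal{Y}})/\tor$, and conclude by the discriminant factorization coming from Proposition~\ref{prop: regulator comparison for tori}. You have merely spelled out the bookkeeping (finiteness of $\Sha(T)$, injectivity of $\cl_T/\tor$, equality of cokernel orders under transposition) that the paper leaves implicit.
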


\begin{proof}
	The only non-trivial map for this Euler characteristic is
		\[
				e
			\colon
				H_{W}^{1}(S, \mathcal{T}^{0}) / \tor
			\to
				H_{W}^{2}(S, \mathcal{T}^{0}) / \tor
		\]
	by Proposition \ref{prop: finiteness and freeness for tori and lattices}.
	Its linear dual is
		\[
				e
			\colon
				Y(K)
			\to
				H_{W}^{1}(S, \Tilde{\mathcal{Y}}) / \tor
		\]
	by Corollary \ref{cor: global duality for tori, each term}
	and the proof of Proposition \ref{prop: duality and Euler characteristic}.
	Hence Proposition \ref{prop: regulator comparison for tori} gives the result.
\end{proof}

\begin{Prop} \label{prop: Weil etale Euler char for torus, torsion part}
	Denote the alternating product of the orders of $H_{W}^{\ast}(S, \mathcal{T}^{0})_{\tor}$
	by $\chi(H_{W}^{\ast}(S, \mathcal{T}^{0})_{\tor})$.
	Then
		\[
				\chi(H_{W}^{\ast}(S, \mathcal{T}^{0})_{\tor})^{-1}
			=
				\frac{
					\# \Cl(\mathcal{T}^{0})_{\tor} \cdot \# \Sha(T)
				}{
					\# \mathcal{T}^{0}(S) \cdot \# H^{1}(K, Y) \cdot \# \Coker(\cl_{T} / \tor)
				}.
		\]
\end{Prop}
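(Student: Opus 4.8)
The plan is to compute the order of each torsion group $H_{W}^{n}(S, \mathcal{T}^{0})_{\tor}$ separately and then multiply. By the finiteness table, $H_{W}^{n}(S, \mathcal{T}^{0})$ vanishes for $n \neq 0, 1, 2$, so
$$
	\chi(H_{W}^{\ast}(S, \mathcal{T}^{0})_{\tor})^{-1}
	=
	\frac{\# H_{W}^{1}(S, \mathcal{T}^{0})_{\tor}}{\# H_{W}^{0}(S, \mathcal{T}^{0})_{\tor} \cdot \# H_{W}^{2}(S, \mathcal{T}^{0})_{\tor}},
$$
and it suffices to identify the three factors. For $n = 0$, exactly as in the proof of Proposition \ref{prop: identifying first Weil etale} one has $H_{W}^{0}(S, \mathcal{T}^{0}) = H^{0}(S, \mathcal{T}^{0}) = \mathcal{T}^{0}(S)$, which is finite by Proposition \ref{prop: finiteness and freeness for tori and lattices}; so this factor is $\# \mathcal{T}^{0}(S)$. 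For $n = 2$, the perfect pairing of finite groups $H_{W}^{2}(S, \mathcal{T}^{0})_{\tor} \times H_{W}^{1}(S, \Tilde{\mathcal{Y}})_{\tor} \to \Q / \Z$ of Corollary \ref{cor: global duality for tori, each term}, together with the isomorphism $H_{W}^{1}(S, \Tilde{\mathcal{Y}})_{\tor} \cong H^{1}(K, Y)$ of Proposition \ref{prop: identifying first Weil etale}, gives $\# H_{W}^{2}(S, \mathcal{T}^{0})_{\tor} = \# H^{1}(K, Y)$. Finally $H_{W}^{3}(S, \mathcal{T}^{0}) = 0$, so there is nothing further.

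The main computation is $\# H_{W}^{1}(S, \mathcal{T}^{0})_{\tor}$, which I would extract from the exact sequence $0 \to \Cl(\mathcal{T}^{0}) \xrightarrow{\cl_{T}} H_{W}^{1}(S, \mathcal{T}^{0}) \to \Sha(T) \to 0$ of Proposition \ref{prop: identifying first Weil etale}, keeping in mind that $\Sha(T)$ is finite and $\cl_{T}$ is injective. Injectivity of $\cl_{T}$ gives $\cl_{T}^{-1}(H_{W}^{1}(S, \mathcal{T}^{0})_{\tor}) = \Cl(\mathcal{T}^{0})_{\tor}$, so $D := H_{W}^{1}(S, \mathcal{T}^{0})_{\tor} / \cl_{T}(\Cl(\mathcal{T}^{0})_{\tor})$ is a subgroup of $H_{W}^{1}(S, \mathcal{T}^{0}) / \cl_{T}(\Cl(\mathcal{T}^{0})) = \Sha(T)$, whence $\# H_{W}^{1}(S, \mathcal{T}^{0})_{\tor} = \# \Cl(\mathcal{T}^{0})_{\tor} \cdot \# D$. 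On the other hand the cokernel of $\cl_{T}/\tor$ is $H_{W}^{1}(S, \mathcal{T}^{0}) / (\cl_{T}(\Cl(\mathcal{T}^{0})) + H_{W}^{1}(S, \mathcal{T}^{0})_{\tor})$, and this quotient is canonically $\Sha(T)/D$; hence $\# D = \# \Sha(T) / \# \Coker(\cl_{T}/\tor)$, and combining the two identities gives
$$
	\# H_{W}^{1}(S, \mathcal{T}^{0})_{\tor}
	=
	\frac{\# \Cl(\mathcal{T}^{0})_{\tor} \cdot \# \Sha(T)}{\# \Coker(\cl_{T}/\tor)}.
$$

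Substituting the three factors into the first display yields the asserted formula. The only point requiring care is the identification $\Coker(\cl_{T}/\tor) \cong \Sha(T)/D$ in the previous paragraph, i.e. correctly tracking how the torsion subgroup, the torsion-free quotient and the subquotient $\Sha(T)$ interact along the given exact sequence; this can equivalently be organized as an application of the $3 \times 3$ lemma to the diagram of the canonical torsion/torsion-free sequences of $\Cl(\mathcal{T}^{0})$, $H_{W}^{1}(S, \mathcal{T}^{0})$ and $\Sha(T)$. Everything else is immediate from the finiteness table, Corollary \ref{cor: global duality for tori, each term} and Proposition \ref{prop: identifying first Weil etale}.
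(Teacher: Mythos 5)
Your proposal is correct and follows essentially the same route as the paper: the paper likewise reduces to the three factors $\#\mathcal{T}^{0}(S)$, $\# H^{1}(K,Y)$ (via the torsion duality pairing and Proposition \ref{prop: identifying first Weil etale}), and the order of $H_{W}^{1}(S,\mathcal{T}^{0})_{\tor}$, the last being read off from the four-term exact sequence $0 \to \Cl(\mathcal{T}^{0})_{\tor} \to H_{W}^{1}(S, \mathcal{T}^{0})_{\tor} \to \Sha(T) \to \Coker(\cl_{T}/\tor) \to 0$, which is exactly the identity you derive by hand via the subgroup $D$.
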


\begin{proof}
	We have an exact sequence of finite groups
		\[
					0
				\to
					\Cl(\mathcal{T}^{0})_{\tor}
				\stackrel{\cl_{T}}{\longrightarrow}
					H_{W}^{1}(S, \mathcal{T}^{0})_{\tor}
				\to
					\Sha(T)
				\to
					\Coker(\cl_{T} / \tor)
				\to
					0
		\]
	by Proposition \ref{prop: identifying first Weil etale}.
	Also
		\[
				\# H_{W}^{2}(S, \mathcal{T}^{0})_{\tor}
			=
				\# H_{W}^{1}(S, \Tilde{\mathcal{Y}})_{\tor}
			=
				\# H^{1}(K, Y)
		\]
	by Corollary \ref{cor: global duality for tori, each term}
	and Proposition \ref{prop: identifying first Weil etale}.
	Therefore
		\begin{align*}
					\chi(H_{W}^{\ast}(S, \mathcal{T}^{0})_{\tor})^{-1}
			&	=
					\frac{
						\# H_{W}^{1}(S, \mathcal{T}^{0})_{\tor}
					}{
						\# H_{W}^{0}(S, \mathcal{T}^{0})_{\tor} \cdot \# H_{W}^{2}(S, \mathcal{T}^{0})_{\tor}
					}
			\\
			&	=
					\frac{
						\# \Cl(\mathcal{T}^{0})_{\tor} \cdot \# \Sha(T)
					}{
						\# \mathcal{T}^{0}(S) \cdot \# H^{1}(K, Y) \cdot \# \Coker(\cl_{T} / \tor)
					}.
		\end{align*}
\end{proof}

\begin{Prop} \label{prop: Weil etale Euler char for torus}
	\[
			\chi_{W}(S, \mathcal{T}^{0})^{-1}
		=
				\frac{
					\# \Cl(\mathcal{T}^{0})_{\tor} \cdot \# \Sha(T)
				}{
					\# \mathcal{T}^{0}(S) \cdot \# H^{1}(K, Y) \cdot \Disc(h_{T})
				}.
	\]
\end{Prop}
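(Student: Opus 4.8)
The plan is to factor the Weil-\'etale Euler characteristic $\chi_{W}(S, \mathcal{T}^{0})$ as the product of a torsion contribution and a torsion-free contribution, and then to invoke the two preceding propositions. Write $C^{\bullet} = H_{W}^{\ast}(S, \mathcal{T}^{0})$, regarded as a bounded complex of finitely generated abelian groups with differential the cup product with $e$; by definition $\chi_{W}(S, \mathcal{T}^{0}) = \chi(C^{\bullet}, e)$. Since $e$ is a group homomorphism, the torsion subgroups $C^{n}_{\tor} = H_{W}^{n}(S, \mathcal{T}^{0})_{\tor}$ form a subcomplex $(C^{\bullet}_{\tor}, e)$, the quotient being the complex $(C^{\bullet}/\tor, \bar{e})$ of finitely generated free abelian groups, so that we have a short exact sequence of complexes $0 \to C^{\bullet}_{\tor} \to C^{\bullet} \to C^{\bullet}/\tor \to 0$.

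First I would check that all three complexes have finite cohomology: $C^{\bullet}$ does because $(C^{\bullet}, e) \tensor \Q$ is exact by \cite[Corollary 5.2]{Gei04} while the terms are finitely generated; $C^{\bullet}_{\tor}$ has finite terms, hence finite cohomology; and $C^{\bullet}/\tor$ then has finite cohomology by the long exact sequence (equivalently, because it is finitely generated with rationally exact differential). Consequently the long exact cohomology sequence attached to the above short exact sequence of complexes shows that Euler characteristics are multiplicative, $\chi(C^{\bullet}, e) = \chi(C^{\bullet}_{\tor}, e) \cdot \chi(C^{\bullet}/\tor, \bar{e})$. Moreover, for the bounded complex $(C^{\bullet}_{\tor}, e)$ of finite groups the Euler characteristic of the complex equals the alternating product of the orders of its terms, which are precisely the $H_{W}^{n}(S, \mathcal{T}^{0})_{\tor}$; this is the quantity $\chi(H_{W}^{\ast}(S, \mathcal{T}^{0})_{\tor})$ of Proposition \ref{prop: Weil etale Euler char for torus, torsion part}. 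Likewise $\chi(C^{\bullet}/\tor, \bar{e}) = \chi(H_{W}^{\ast}(S, \mathcal{T}^{0})/\tor, e)$ in the notation of Proposition \ref{prop: Weil etale Euler char for torus, free part}.

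Taking inverses and substituting the formulas of Propositions \ref{prop: Weil etale Euler char for torus, torsion part} and \ref{prop: Weil etale Euler char for torus, free part} then gives
\[
\chi_{W}(S, \mathcal{T}^{0})^{-1} = \frac{\# \Cl(\mathcal{T}^{0})_{\tor} \cdot \# \Sha(T)}{\# \mathcal{T}^{0}(S) \cdot \# H^{1}(K, Y) \cdot \# \Coker(\cl_{T} / \tor)} \cdot \frac{\# \Coker(\cl_{T} / \tor)}{\Disc(h_{T})},
\]
and the factor $\# \Coker(\cl_{T} / \tor)$ cancels, which is exactly the asserted identity.

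I do not expect a serious obstacle here: the genuine input --- the identification of $H_{W}^{1}(S, \mathcal{T}^{0})$ as an extension of $\Sha(T)$ by $\Cl(\mathcal{T}^{0})$, the computation of the remaining torsion groups by duality, and the comparison of the regulator pairing with $h_{T}$ --- has already been carried out in Propositions \ref{prop: identifying first Weil etale}, \ref{prop: regulator comparison for tori}, \ref{prop: Weil etale Euler char for torus, free part} and \ref{prop: Weil etale Euler char for torus, torsion part}, and the cancellation of $\# \Coker(\cl_{T} / \tor)$ is built into how those two computations were organized. The only point demanding a little care is the finiteness of the cohomology of $C^{\bullet}/\tor$, needed to make the multiplicativity of Euler characteristics legitimate; this is immediate from finite generation together with the rational exactness of the cup-product differential.
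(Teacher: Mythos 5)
Your proof is correct and follows the same route as the paper: factor $\chi_{W}(S,\mathcal{T}^{0})$ into the torsion and torsion-free contributions and combine Propositions \ref{prop: Weil etale Euler char for torus, free part} and \ref{prop: Weil etale Euler char for torus, torsion part}, with $\#\Coker(\cl_{T}/\tor)$ cancelling. The only difference is that the paper cites the proof of \cite[Theorem 9.1]{Gei04} for the multiplicativity $\chi_{W}=\chi(H_{W}^{\ast}/\tor,e)\cdot\chi(H_{W}^{\ast}{}_{,\tor})$, whereas you derive it directly from the short exact sequence of complexes; both are fine.
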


\begin{proof}
	The number $\chi_{W}(S, \mathcal{T}^{0})$ is the product of
	$\chi(H_{W}^{\ast}(S, \mathcal{T}^{0}) / \tor, e)$ and
	$\chi(H_{W}^{\ast}(S, \mathcal{T}^{0})_{\tor})$
	by the proof of \cite[Theorem 9.1]{Gei04}.
	Therefore Propositions \ref{prop: Weil etale Euler char for torus, free part}
	and \ref{prop: Weil etale Euler char for torus, torsion part}
	give the result.
\end{proof}

%%%%%%%%%%%%%%%%%%%%%%%%%%%%%%%%%%%%%%%%%%%%%%%%%%%%%%%%%%%%%%%%%%%%%%%%%%%%%

\section{A Weil-\'etale Tamagawa number formula for tori}
\label{sec: Ono Tamagawa number formula in terms of Weil etale cohomology}

In this section, we express the Tamagawa number of $T$,
defined by Ono in \cite{Ono61} and redefined (in the function field case) 
by Oesterl\'e \cite{Oes84}, 
in terms of arithmetic-geometric invariants defined without using Haar measures.
We use this to reprove Ono-Oesterl\'e's Tamagawa number 
formula \cite{Ono63}, \cite{Oes84}.

We begin by recalling the Tamagawa number from
\cite[Sections 3.1--3.5]{Ono61}, \cite[Chapter I]{Oes84}.
Set
	\[
		d = \dim(T), \quad r = \rank(Y(K)), \quad g= \text{genus}\; S.
	\]
The function $L(T, s)$ has a pole of order $r$ at $s = 1$
by Theorem \ref{thm: L value formula for torus}.
We define
	\[
			\rho(T)
		=
			\lim_{s \to 1}
				L(T, s) (s - 1)^{r}.
	\]
We denote the sum of the valuations
$\Adele_{K}^{\times} \onto \bigoplus_{v} \Z \onto \Z$
by $\deg$. A character $\chi \in Y(K) = \Hom_{K}(T, \Gm)$
induces a homomorphism $\bar \chi: T(\Adele_{K}) \to \Adele_{K}^{\times}$,
and as in \cite[Section 3.1]{Ono61} we define the subgroup
$$T(\Adele_{K})^{1} = \{ x\in T(\Adele_{K}) \mid \deg(\bar \chi(x)) = 0
\;\forall \chi \in Y(K)\}.$$
In other words, $T(\Adele_{K})^{1}$ is the inverse image
of the left kernel of the pairing $h_{T}$ in \eqref{eq: height pairing for tori}
under the quotient map $T(\Adele_{K}) \onto \Cl(\mathcal{T}^{0})$.
This group contains $T(K)$ and $\mathcal{T}^{0}(\Order_{\Adele_{K}})$.
Recall that the quotient $\Cl(\mathcal{T}^{0})$ of
$T(\Adele_{K})$ by $T(K) + \mathcal{T}^{0}(\Order_{\Adele_{K}})$
is finitely generated by
Propositions \ref{prop: identifying first Weil etale}
and \ref{prop: cohom of torus is finitely generated and bounded}.

\begin{Prop} \label{prop: describe the degree zero adelic points}
	In the exact sequence
		\[
				0
			\to
				\frac{
					T(\Adele_{K})^{1}
				}{
					T(K) + \mathcal{T}^{0}(\Order_{\Adele_{K}})
				}
			\to
				\Cl(\mathcal{T}^{0})
			\to
				\frac{T(\Adele_{K})}{T(\Adele_{K})^{1}}
			\to
				0,
		\]
	the first term is the torsion part of $\Cl(\mathcal{T}^{0})$.
	In particular, $T(\Adele_{K})^{1}$ is the inverse image of 
	$\Cl(\mathcal{T}^{0})_{\tor}$
	under the surjection $T(\Adele_{K}) \onto \Cl(\mathcal{T}^{0})$.
\end{Prop}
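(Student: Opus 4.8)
The plan is to identify the first term of the displayed exact sequence with the left kernel of the height pairing $h_{T}$ of \eqref{eq: height pairing for tori}, and then to show that this left kernel coincides with $\Cl(\mathcal{T}^{0})_{\tor}$; the only non‑formal point is the non‑degeneracy of $h_{T}$ modulo torsion, which I would extract from Weil‑\'etale duality.

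One inclusion is free: since the target $\Z$ of $h_{T}$ is torsion‑free, $\Cl(\mathcal{T}^{0})_{\tor}$ is contained in the left kernel $N$ of $h_{T}$, and by the remark preceding the statement $N$ is exactly the first term $T(\Adele_{K})^{1}/(T(K) + \mathcal{T}^{0}(\Order_{\Adele_{K}}))$ of the sequence. For the reverse inclusion I would invoke Proposition \ref{prop: regulator comparison for tori}, which identifies $h_{T}$ with the composite of $\cl_{T} \times e$ followed by the pairing
\[
	H_{W}^{1}(S, \mathcal{T}^{0}) \times H_{W}^{1}(S, \Tilde{\mathcal{Y}}) \to \Z
\]
of Corollary \ref{cor: global duality for tori, each term}; the latter is perfect, hence has trivial left kernel after $\tensor \Q$. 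It then suffices to check that $\cl_{T} \tensor \Q$ and $e \tensor \Q$ are isomorphisms. For $\cl_{T}$ this is immediate from the exact sequence $0 \to \Cl(\mathcal{T}^{0}) \to H_{W}^{1}(S, \mathcal{T}^{0}) \to \Sha(T) \to 0$ of Proposition \ref{prop: identifying first Weil etale} together with the finiteness of $\Sha(T)$. For $e \colon H_{W}^{0}(S, \Tilde{\mathcal{Y}}) \tensor \Q \to H_{W}^{1}(S, \Tilde{\mathcal{Y}}) \tensor \Q$, injectivity follows from the exactness of the complex $(H_{W}^{\ast}(S, \Tilde{\mathcal{Y}}) \tensor \Q, e)$ and $H_{W}^{-1}(S, \Tilde{\mathcal{Y}}) = 0$, and surjectivity then follows since both spaces have the same dimension $\rank Y(K)$ by Proposition \ref{prop: finiteness and freeness for tori and lattices}. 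Chaining the three, the map $\Cl(\mathcal{T}^{0}) \tensor \Q \to \Hom_{\Q}(Y(K) \tensor \Q, \Q)$ induced by $h_{T} \tensor \Q$ is an isomorphism, so $N \tensor \Q = 0$; as $N$ is finitely generated (being a subgroup of $\Cl(\mathcal{T}^{0})$) this forces $N \subseteq \Cl(\mathcal{T}^{0})_{\tor}$, whence $N = \Cl(\mathcal{T}^{0})_{\tor}$. The final ``in particular'' is then immediate, $T(\Adele_{K})^{1}$ being by definition the preimage of $N$.

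Essentially everything here is formal once one has Proposition \ref{prop: regulator comparison for tori} and the perfectness in Corollary \ref{cor: global duality for tori, each term}: the dimension count and the stability of left non‑degeneracy under precomposition with isomorphisms are routine. The one substantive ingredient — the very fact that $h_{T}$ is non‑degenerate modulo torsion — is its identification with a slice of Weil‑\'etale Poincar\'e duality, so the real work lies in Section \ref{sec: Weil etale cohomology of tori and lattices} and in Proposition \ref{prop: regulator comparison for tori}; there is no further obstacle in the present proof beyond assembling these.
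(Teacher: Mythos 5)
Your argument is correct, but it takes a genuinely different route from the paper. The paper's proof is two lines: the first term of the sequence is a discrete quotient of the compact group $T(\Adele_{K})^{1}/T(K)$ (compactness being \cite[Theorem 3.1.1]{Ono61}), hence finite, while the third term is torsion-free by its very definition (it injects into $\Hom(Y(K),\Z)$ via the degree maps); a finite subgroup with torsion-free quotient is automatically the full torsion subgroup. You instead identify the first term with the left kernel $N$ of $h_{T}$ and prove $N\tensor\Q=0$ by transporting the perfectness of the Weil-\'etale pairing of Corollary \ref{cor: global duality for tori, each term} through the isomorphisms $\cl_{T}\tensor\Q$ and $e\tensor\Q$, using Proposition \ref{prop: regulator comparison for tori}. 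Each step you invoke does hold and is available without circularity (Propositions \ref{prop: identifying first Weil etale}, \ref{prop: regulator comparison for tori} and \ref{prop: finiteness and freeness for tori and lattices} do not depend on the present statement), so the chain of reasoning goes through; note that you only need $\Sha(T)\tensor\Q=0$, which is immediate since $\Sha(T)\subseteq H^{1}(K,T)$ is torsion, so you need not separately justify finiteness of $\Sha(T)$ at this point. The trade-off: your route is self-contained within the paper's cohomological framework and as a by-product re-proves that $h_{T}$ is non-degenerate modulo torsion (which the paper only extracts later, in Proposition \ref{prop: Weil etale Euler char for torus, free part}), whereas the paper's route is far shorter and more elementary, at the cost of importing Ono's adelic compactness theorem.
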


\begin{proof}
	The quotient $T(\Adele_{K})^{1} / T(K)$ is compact
	by \cite[Theorem 3.1.1]{Ono61}.
	Hence the first term is finite.
	The third term is torsion-free by definition.
\end{proof}

Let $\omega$ be a non-zero invariant differential form on $T / K$
of maximal degree.
It induces a canonical Haar measure on $\Lie(T)(K_{v})$ and $T(K_{v})$ for each $v$,
which we denote by $\mu_{v}$.
Set
	\[
			P_{v}(T, t)
		=
			\det(1 - \varphi_{v} t \mid V_{l}(T)(-1)^{I_{v}}).
	\]
We define the Tamagawa measure $\mu_{T(\Adele_{K})}$ on $T(\Adele_{K})$ by
	\[
			\mu_{T(\Adele_{K})}
		=
			\frac{1}{\rho(T) \cdot q^{(g - 1) d}}
			\prod_{v} P_{v}(T, N(v)^{-1})^{-1} \mu_{v}.
	\]
The infinite product evaluated on open compact subgroups absolutely converges
\cite[Section 3.3]{Ono61}
and hence defines a Haar measure on $T(\Adele_{K})$, which
does not depend on the choice of $\omega$ by the product formula.
The composite map
	\[
			T(\Adele_{K})
		\onto
			\Cl(\mathcal{T}^{0})
		\stackrel{h_{T}}{\longrightarrow}
			\Hom(Y(K), \Z)
		\stackrel{n\mapsto q^n}{\longrightarrow}
			\Hom(Y(K), \R_{>0}^{\times})
	\]
is denoted by $\vartheta$ in \cite[Chapter I, Section 5.5]{Oes84}.
Recall from \cite[Section 3.5]{Ono61} and \cite[Chapter I, Definition 5.12]{Oes84} that
the Tamagawa number $\tau(T)$ is defined as
	\[
			\tau(T)
		=
			\frac{
				\mu_{T(\Adele_{K})} \bigl(
					T(\Adele_{K})^{1} / T(K)
				\bigr)
			}{
				(\log q)^{r} \cdot \Disc(h_{T})
			}
	\]
The correction factor $\Disc(h_{T})$
was introduced by Oesterl\'e \cite[Chapter I, Definition 5.9 (b)]{Oes84}
and did not appear in \cite{Ono61}.
It can be non-trivial \cite[Chapter I, Remark 5.7]{Oes84}.

\begin{Prop} \label{prop: Tamagawa number in modern terms}
	\[
			\tau(T)
		=
			\frac{
				\# \Cl(\mathcal{T}^{0})_{\tor} \cdot q^{\chi(S, \Lie \mathcal{T}^{0})}
			}{
				\# \mathcal{T}^{0}(S) \cdot \rho(T) \cdot (\log q)^{r} \cdot \Disc(h_{T})
			}.
	\]
\end{Prop}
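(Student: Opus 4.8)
The plan is to evaluate the Tamagawa measure $\mu_{T(\Adele_{K})}$ on $T(\Adele_{K})^{1}/T(K)$ directly and compare it with the arithmetic-geometric quantities already assembled. First I would use the exact sequence
	\[
			0
		\to
			\mathcal{T}^{0}(\Order_{\Adele_{K}})
		\to
			T(\Adele_{K})^{1}
		\to
			\Cl(\mathcal{T}^{0})_{\tor}
		\to
			0
	\]
coming from Proposition \ref{prop: describe the degree zero adelic points} and the fact that $T(K) \cap \mathcal{T}^{0}(\Order_{\Adele_{K}}) = \mathcal{T}^{0}(S)$ (since a $K$-point lying in every connected N\'eron model is an $S$-point). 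Thus $T(\Adele_{K})^{1}/T(K)$ is built out of $\mathcal{T}^{0}(\Order_{\Adele_{K}})/\mathcal{T}^{0}(S)$ by a finite extension of order $\#\Cl(\mathcal{T}^{0})_{\tor}$, and by additivity of Haar measure under such extensions,
	\[
			\mu_{T(\Adele_{K})}\bigl(T(\Adele_{K})^{1}/T(K)\bigr)
		=
			\#\Cl(\mathcal{T}^{0})_{\tor}
			\cdot
			\frac{1}{\#\mathcal{T}^{0}(S)}
			\cdot
			\mu_{T(\Adele_{K})}\bigl(\mathcal{T}^{0}(\Order_{\Adele_{K}})\bigr).
	\]

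Second, I would compute $\mu_{T(\Adele_{K})}(\mathcal{T}^{0}(\Order_{\Adele_{K}}))$ from the definition of the Tamagawa measure. Unwinding, this equals
	\[
			\frac{1}{\rho(T)\cdot q^{(g-1)d}}
			\prod_{v} P_{v}(T, N(v)^{-1})^{-1}\,\mu_{v}\bigl(\mathcal{T}^{0}(\Order_{v})\bigr),
	\]
and the local factor $\mu_{v}(\mathcal{T}^{0}(\Order_{v}))$ is computed by the standard local volume formula: the canonical measure attached to $\omega$ gives $\mu_{v}(\mathcal{T}^{0}(\Order_{v})) = N(v)^{-\dim T}\cdot\#\mathcal{T}^{0}(k(v))\cdot(\text{a factor recording the discrepancy between }\omega\text{ and a local N\'eron differential})$. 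Multiplying by $P_{v}(T,N(v)^{-1})^{-1}$ and using $\#\mathcal{T}^{0}(k(v)) = N(v)^{\dim T}\prod_{i}(1-\text{eigenvalues})$ (Lang's theorem / the Weil conjecture count for the connected fiber, which is an extension of a torus by a unipotent piece but whose point count is governed by the torus part), the product telescopes into the special value $\rho(T)$ up to the global power-of-$q$ discrepancy recorded by $\sum_{v}\deg(v)\cdot(\text{discrepancy at }v)$. That global discrepancy is exactly $\deg(\det\Lie(\mathcal{T}^{0}))$, whence by Riemann--Roch the combined power of $q$ is $q^{\chi(S,\Lie\mathcal{T}^{0})}$ (recall $\chi(S,\Lie\mathcal{T}^{0}) = (1-g)d + \deg(\Lie\mathcal{T}^{0})$, which absorbs the $q^{(g-1)d}$ prefactor).

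Third, I would divide by $(\log q)^{r}\cdot\Disc(h_{T})$ as in the definition of $\tau(T)$; the factors of $\rho(T)$ from the measure normalization and $\Disc(h_{T})$ from Oesterl\'e's correction carry through verbatim, giving
	\[
			\tau(T)
		=
			\frac{\#\Cl(\mathcal{T}^{0})_{\tor}\cdot q^{\chi(S,\Lie\mathcal{T}^{0})}}{\#\mathcal{T}^{0}(S)\cdot\rho(T)\cdot(\log q)^{r}\cdot\Disc(h_{T})},
	\]
as claimed. The main obstacle I anticipate is the bookkeeping in the local-to-global volume computation: one must be careful that the ``discrepancy factor'' between the fixed global differential $\omega$ and the local N\'eron differentials at each $v$ assembles correctly into $-\deg(\det\Lie(\mathcal{T}^{0}))$ (with the sign conventions matching those in Proposition \ref{prop: conductor formula to torus}), and that the Euler factors $P_{v}(T,N(v)^{-1})^{-1}$ really do conspire with the point counts $\#\mathcal{T}^{0}(k(v))$ to reproduce $\rho(T)^{-1}$ rather than some shifted $L$-value — this is where the definition $\rho(T) = \lim_{s\to 1}L(T,s)(s-1)^{r}$ and the convergence statement in \cite[Section 3.3]{Ono61} must be invoked with care. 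Everything else is formal manipulation of Haar measures under the exact sequences already in hand.
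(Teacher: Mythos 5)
Your proposal is correct and follows essentially the same route as the paper: reduce to the volume of $\mathcal{T}^{0}(\Order_{\Adele_{K}})$ via the exact sequence from Proposition \ref{prop: describe the degree zero adelic points} together with $T(K)\cap\mathcal{T}^{0}(\Order_{\Adele_{K}})=\mathcal{T}^{0}(S)$, cancel the Euler factors against the point counts through the exact identity $P_{v}(T,N(v)^{-1})=\#\mathcal{T}^{0}(k(v))/N(v)^{d}$, and assemble the remaining local volumes $N(v)^{-v(\omega)}$ into $q^{\deg\Lie\mathcal{T}^{0}}$ by the product formula and Riemann--Roch. One caveat on wording: the local product does not ``telescope into $\rho(T)$'' --- after cancellation it is exactly the power of $q$ just described, and $\rho(T)$ enters only through the normalization of the Tamagawa measure, as your final paragraph in fact acknowledges.
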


\begin{proof}
	By Proposition \ref{prop: describe the degree zero adelic points},
	we have an exact sequence
		\[
				0
			\to
				\frac{\mathcal{T}^{0}(\Order_{\Adele_{K}})}{\mathcal{T}^{0}(S)}
			\to
				\frac{T(\Adele_{K})^{1}}{T(K)}
			\to
				\Cl(\mathcal{T}^{0})_{\tor}
			\to
				0.
		\]
	Hence $\tau(T) \cdot (\log q)^{r} \cdot \Disc(h_{T})=
	\mu_{T(\Adele_{K})} \bigl(T(\Adele_{K})^{1} / T(K)	\bigr)$
	can be written as
		\begin{align*}
			&	
						\frac{
							\# \Cl(\mathcal{T}^{0})_{\tor}
						}{
							\# \mathcal{T}^{0}(S)
						}
					\cdot
						\mu_{T(\Adele_{K})} \bigl(
							\mathcal{T}^{0}(\Order_{\Adele_{K}})
						\bigr)
			\\
			&	=
						\frac{
							\# \Cl(\mathcal{T}^{0})_{\tor}
						}{
							\# \mathcal{T}^{0}(S) \cdot \rho(T) \cdot q^{(g - 1) d}
						}
					\cdot
						\prod_{v}
							P_{v}(T, N(v)^{-1})^{-1}
							\mu_{v}(\mathcal{T}^{0}(\Order_{v})).
		\end{align*}
	To calculate the factors in the product term, we have
		\[
				P_{v}(T, N(v)^{-1})
			=
				\frac{\# \mathcal{T}^{0}(k(v))}{N(v)^{d}}
		\]
	(see the proof of \cite[Proposition 4.1]{GS20} for example)
	and
		\begin{align*}
					\mu_{v}(\mathcal{T}^{0}(\Order_{v}))
			&	=
						\# \mathcal{T}^{0}(k(v))
					\cdot
						\mu_{v}(\mathcal{T}^{0}(\ideal{p}_{v}))
			\\
			&	=
						\# \mathcal{T}^{0}(k(v))
					\cdot
						\mu_{v}(\Lie(\mathcal{T}^{0})(\ideal{p}_{v}))
			\\
			&	=
						\frac{\# \mathcal{T}^{0}(k(v))}{N(v)^{d}}
					\cdot
						\mu_{v}(\Lie(\mathcal{T}^{0})(\Order_{v})),
		\end{align*}
	where $\mathcal{T}^{0}(\ideal{p}_{v})$ denotes
	the kernel of the reduction map
	$\mathcal{T}^{0}(\Order_{v}) \onto \mathcal{T}^{0}(k(v))$
	and $\Lie(\mathcal{T}^{0})(\ideal{p}_{v})$ similarly.
	Hence
		\begin{align*}
					P_{v}(T, N(v)^{-1})^{-1}
					\mu_{v}(\mathcal{T}^{0}(\Order_{v}))
			&	=
					\mu_{v} \bigl(
						\Lie(\mathcal{T}^{0})(\Order_{v})
					\bigr)
			\\
			&	=
					N(v)^{- v(\omega)},
		\end{align*}
	where $v(\omega)$ is the order of zero at $v$ of $\omega$
	as a rational section of $\det(\Lie(\mathcal{T}^{0}))^{\ast}$.
	Therefore
		\begin{align*}
					\prod_{v}
						P_{v}(T, N(v)^{-1})^{-1}
						\mu_{v}(\mathcal{T}^{0}(\Order_{v}))
			&	=
					\prod_{v}
						q^{- \deg(v) \cdot v(\omega)}
			\\
			&	=
					q^{\deg(\Lie(\mathcal{T}^{0}))}
			\\
			&	=
					q^{\chi(S, \Lie \mathcal{T}^{0}) - (1 - g) d},
		\end{align*}
	where the last equality is the Riemann-Roch theorem.
\end{proof}

\begin{Prop}
	\[
			\tau(T)
		=
			\frac{\# H^{1}(K, Y)}{\# \Sha(T)}.
	\]
\end{Prop}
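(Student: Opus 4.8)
The plan is to combine three results already established in the preceding sections: the classical-invariant expression for $\tau(T)$ from Proposition~\ref{prop: Tamagawa number in modern terms}, the special value formula of Theorem~\ref{thm: L value formula for torus}, and the arithmetic formula for the Weil-\'etale Euler characteristic from Proposition~\ref{prop: Weil etale Euler char for torus}. Concretely, the first of these expresses $\tau(T)$ as a ratio involving $\rho(T)$, $q^{\chi(S, \Lie \mathcal{T}^{0})}$, $(\log q)^{r}$ and $\Disc(h_{T})$; the second expresses $\rho(T)$ in terms of $\chi_{W}(S, \mathcal{T}^{0})$, $q^{\chi(S, \Lie \mathcal{T}^{0})}$ and $(\log q)^{r}$; and the third expresses $\chi_{W}(S, \mathcal{T}^{0})$ in terms of $\# \Cl(\mathcal{T}^{0})_{\tor}$, $\# \Sha(T)$, $\# \mathcal{T}^{0}(S)$, $\# H^{1}(K, Y)$ and $\Disc(h_{T})$. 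Feeding the second into the first to eliminate $\rho(T)$, and then the third to eliminate $\chi_{W}(S, \mathcal{T}^{0})$, ought to leave precisely $\# H^{1}(K, Y) / \# \Sha(T)$.

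First I would observe that, by Corollary~\ref{weighted}, $r_{T} = -r$, so that $\rho(T) = \lim_{s \to 1} L(T, s)(s - 1)^{r}$ is exactly the leading coefficient appearing in Theorem~\ref{thm: L value formula for torus}; that theorem then reads $\rho(T) = \chi_{W}(S, \mathcal{T}^{0})^{-1} \cdot q^{\chi(S, \Lie \mathcal{T}^{0})} \cdot (\log q)^{-r}$. Rearranging gives $q^{\chi(S, \Lie \mathcal{T}^{0})} = \rho(T) \cdot \chi_{W}(S, \mathcal{T}^{0}) \cdot (\log q)^{r}$.

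Next I would substitute this identity into the formula of Proposition~\ref{prop: Tamagawa number in modern terms}; the factors $\rho(T)$ and $(\log q)^{r}$ cancel against the denominator there, leaving $\tau(T) = \# \Cl(\mathcal{T}^{0})_{\tor} \cdot \chi_{W}(S, \mathcal{T}^{0}) \cdot (\# \mathcal{T}^{0}(S) \cdot \Disc(h_{T}))^{-1}$. Finally, inserting the value of $\chi_{W}(S, \mathcal{T}^{0})$ supplied by Proposition~\ref{prop: Weil etale Euler char for torus}, the factors $\# \Cl(\mathcal{T}^{0})_{\tor}$, $\# \mathcal{T}^{0}(S)$ and $\Disc(h_{T})$ all cancel, and what remains is exactly $\# H^{1}(K, Y) / \# \Sha(T)$, which is the assertion.

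There is no serious obstacle: the proof is purely a matter of collecting and cancelling the previously established formulas. The only point deserving a moment's attention is the bookkeeping of signs and exponents --- in particular checking that $r_{T} = -r$ so that $\rho(T)$ really is the leading coefficient with the exponent $r_{T}$ occurring in Theorem~\ref{thm: L value formula for torus}, and that the powers of $\log q$ and of $q^{\chi(S, \Lie \mathcal{T}^{0})}$ line up so as to cancel cleanly.
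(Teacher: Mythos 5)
Your proposal is correct and is precisely the paper's own proof: the authors likewise deduce the formula by combining Proposition~\ref{prop: Tamagawa number in modern terms}, Theorem~\ref{thm: L value formula for torus}, and Proposition~\ref{prop: Weil etale Euler char for torus}, and the cancellations you describe (including the sign check $r_{T}=-r$ from Corollary~\ref{weighted}) work out exactly as you say.
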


\begin{proof}
	This follows from Theorem \ref{thm: L value formula for torus},
	Propositions \ref{prop: Weil etale Euler char for torus}
	and \ref{prop: Tamagawa number in modern terms}.
\end{proof}

This reproves Ono-Oesterl\'e's Tamagawa number formula
\cite[Section 5, Main theorem]{Ono63}, \cite[Chapter IV, Corollary 3.3]{Oes84}.

%%%%%%%%%%%%%%%%%%%%%%%%%%%%%%%%%%%%%%%%%%%%%%%%%%%%%%%%%%%%%%%%%%%%%%%%%%%%%

\section{$1$-motives}
\label{sec: 1 motives}

Let $M$ be a $1$-motive over $K$. The goal of this section is to 
combine the results of this paper for tori and lattices with the
results of for abelian varieties of \cite{GS20} to obtain a formula for the
$L$-function of $M$ at $s = 1$. More precisely, we define a model 
$\mathcal{M}^{\Delta}$ over $S$ such that,
assuming the finiteness of the Tate-Shafarevich group of the abelian 
variety component of $M$, the groups 
$H_{W}^{\ast}(S, \mathcal{M}^{\Delta})$ are finitely generated, 
and the leading coefficient of the $L$-function of $M$ at $s = 1$
can be expressed in terms of $H_{W}^{\ast}(S, \mathcal{M}^{\Delta})$,

Let $M= [X \to G]$ be a $1$-motive over $K$,
where $X$ and $G$ are a lattice and a semi-abelian variety over $K$
placed in degree $-1$ and $0$, respectively.
Let $T$ be the torus part of $G$ and $A$ the abelian variety quotient of $G$.
Let $Y$ be the character lattice of $T$.
Denote $\mathcal{X} = j_{\ast} X$,
where $j \colon \Spec K \into S$.
Denote the N\'eron and connected N\'eron model of $G$ over $S$  by
$\mathcal{G}$ and $\mathcal{G}^{0}$, respectively.
Define $\mathcal{X}^{\Delta}$ by the fiber product
	\[
		\begin{CD}
				\mathcal{X}^{\Delta} @>>> \mathcal{G}^{0}
				\\ @VVV @VVV \\
				\mathcal{X} @>>> \mathcal{G}.
		\end{CD}
	\]
Note that despite the notation,
the scheme $\mathcal{X}^{\Delta}$ depends not only on $\mathcal{X}$
but the whole data $M = [X \to G]$.
The fiber $\mathcal{X}_{v}^{\Delta}$ of $\mathcal{X}^{\Delta}$ at a place $v$
is the kernel of the morphism
$\mathcal{X}_{v} \to \pi_{0}(\mathcal{G}_{v})$ over $k(v)$.
Therefore the group of geometric points of $\mathcal{X}_{v}^{\Delta}$ is
the kernel of the map
$X(K_{v}^{sh}) \to \pi_{0}(\mathcal{G}_{v})(\closure{k(v)})$.

\begin{Prop}
	The scheme $\mathcal{X}^{\Delta}$ is a $\Z$-constructible 
	\'etale subsheaf of $\mathcal{X}$
	such that the quotient $\mathcal{X} / \mathcal{X}^{\Delta}$ is supported
	on finitely many closed points of $S$.
\end{Prop}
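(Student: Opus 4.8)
The plan is to verify the three assertions — that $\mathcal{X}^{\Delta}$ is an étale sheaf, that it is $\Z$-constructible, and that $\mathcal{X}/\mathcal{X}^{\Delta}$ is a skyscraper supported at finitely many closed points — by working place by place using the explicit description of the stalks already recorded just above the statement. First I would note that $\mathcal{X}^{\Delta}$ is a fiber product of étale sheaves on $S$ (namely $\mathcal{X}$, $\mathcal{G}^{0}$, over $\mathcal{G}$), hence automatically an étale sheaf; since the map $\mathcal{G}^{0} \to \mathcal{G}$ is a monomorphism on the level of sheaves (it is an open immersion of group schemes, identifying $\mathcal{G}^{0}$ with the subsheaf of sections landing in the identity component on every fiber), the fiber product is a subsheaf of $\mathcal{X}$. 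Concretely, the stalk $\mathcal{X}^{\Delta}_{\bar v}$ at a geometric point over $v$ is the kernel of the composite $X(K_v^{sh}) = \mathcal{X}_{\bar v} \to \mathcal{G}(\Order_v^{sh}) = \mathcal{G}_v(\closure{k(v)}) \to \pi_0(\mathcal{G}_v)(\closure{k(v)})$, exactly as stated in the paragraph preceding the proposition, and at the generic point $\mathcal{X}^{\Delta}_{\bar\eta} = X(K^{\sep}) = \mathcal{X}_{\bar\eta}$ since the generic fiber of $\mathcal{G}$ is connected.

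Next I would establish $\Z$-constructibility. The relevant criterion (from \cite[Chapter II, Section 0]{Mil06}) is that there is a dense open $U \subseteq S$ over which $\mathcal{X}^{\Delta}$ is locally constant with finitely generated stalks, and that the remaining stalks are finitely generated as well. Over the open set where $\mathcal{X}$ is lisse, $X$ has good reduction and $\mathcal{G}_v = \mathcal{G}_v^{0}$ has connected fibers, so on a dense open $U$ the map $X(K_v^{sh}) \to \pi_0(\mathcal{G}_v)(\closure{k(v)})$ is the zero map and $\mathcal{X}^{\Delta}|_U = \mathcal{X}|_U$ is lisse; and at each of the finitely many remaining closed points the stalk is a subgroup of the finitely generated group $X(K_v^{sh})$, hence finitely generated. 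This gives $\Z$-constructibility.

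For the third assertion, the quotient sheaf $\mathcal{X}/\mathcal{X}^{\Delta}$ has stalk at $\bar v$ equal to the image of $X(K_v^{sh})$ in $\pi_0(\mathcal{G}_v)(\closure{k(v)})$, which is trivial for all $v$ in the dense open $U$ described above (where the composite is zero), and at each remaining closed point is a subgroup of the finite group $\pi_0(\mathcal{G}_v)(\closure{k(v)})$ — finiteness of $\pi_0$ of a Néron lft model being standard, cf.\ \cite{BLR90}. Hence $\mathcal{X}/\mathcal{X}^{\Delta}$ is concentrated at the finitely many points of $S \setminus U$ with finite stalks, so it is a skyscraper sheaf supported on finitely many closed points. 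I do not anticipate a genuine obstacle here; the only point requiring a little care is the identification of $\mathcal{X}^{\Delta}_{\bar v}$ with the stalk of the fiber-product sheaf — one must check that forming stalks commutes with the fiber product and, more substantively, that the value of the Néron model $\mathcal{G}$ on $\Order_v^{sh}$ is $\mathcal{G}_v(\closure{k(v)})$ (which follows from the henselian property of $\Order_v^{sh}$ and the smoothness of $\mathcal{G}$), so that the condition "lands in $\mathcal{G}^{0}$" really translates into "maps to zero in $\pi_0(\mathcal{G}_v)$". Once that dictionary is in place the rest is the routine place-by-place bookkeeping sketched above.
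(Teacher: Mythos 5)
Your overall structure (reduce to a stalkwise statement, find a dense open $U$ where nothing happens, handle the finitely many remaining points) matches the paper's, but the step that does all the work is wrong. You claim that over a dense open $U$ the N\'eron model $\mathcal{G}$ has connected fibres, so that $X(K_v^{sh}) \to \pi_0(\mathcal{G}_v)(\closure{k(v)})$ is zero for $v \in U$. This is false whenever the torus part $T$ of $G$ is nontrivial: the N\'eron lft model of $\Gm$ already has $\pi_0(\mathcal{G}_{m,v}) \cong \Z$ at \emph{every} place (the valuation map), and more generally $\pi_0(\mathcal{G}_v)$ has positive rank at every place where $Y$ is unramified, hence at almost all places. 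The paper's Example 6.3(1) ($X = \Z \to \Gm$ given by $f \in K^{\times}$) is exactly the situation your argument cannot see: there the map $\mathcal{X}_v \to \pi_0(\mathcal{G}_v)$ is multiplication by $v(f)$ on $\Z$, and it vanishes for almost all $v$ not because the target is zero but because $f$ has only finitely many zeros and poles. For the same reason your assertion that $\pi_0(\mathcal{G}_v)(\closure{k(v)})$ is finite is incorrect (it is only finitely generated, which is what the paper cites [HN11] for), and indeed $\mathcal{X}/\mathcal{X}^{\Delta}$ can have stalks isomorphic to $\Z$, as in that same example.

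The missing idea is to argue from the \emph{source} rather than the target: writing $\mathcal{G}/\mathcal{G}^0 \cong \bigoplus_v i_{v,\ast}\pi_0(\mathcal{G}_v)$, one must show that the induced map $\mathcal{X} \to \bigoplus_v i_{v,\ast}\pi_0(\mathcal{G}_v)$ factors through a finite partial direct sum. The paper does this by restricting to a dense open $U$ where $\mathcal{X}$ is a lattice, passing to a finite \'etale Galois cover $U'/U$ trivializing it, and observing that the resulting homomorphism out of the finitely generated group $\mathcal{X}(U')$ into a direct sum must land in finitely many summands. Your remaining bookkeeping (the identification of stalks, the generic fibre, $\Z$-constructibility from finitely generated stalks) is fine, but without this finiteness-of-support argument the proof does not go through for any $M$ whose semiabelian part has a nontrivial torus.
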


\begin{proof}
	It is enough to show that the morphism
	$\mathcal{X}_{v} \to \pi_{0}(\mathcal{G}_{v})$ over $k(v)$
	is zero for almost all places $v$.
	We have an exact sequence
		\[
				0
			\to
				\mathcal{G}^{0}
			\to
				\mathcal{G}
			\to
				\bigoplus_{v}
					i_{v, \ast} \pi_{0}(\mathcal{G}_{v})
			\to
				0
		\]
	on $S_{\et}$.
	We want to show that the induced morphism
	$\mathcal{X} \to \bigoplus_{v} i_{v, \ast} \pi_{0}(\mathcal{G}_{v})$
	factors through a finite partial direct sum.
	Let $U \subseteq S$ be a dense open subscheme
	such that $\mathcal{X}$ is a lattice over $U$.
	Let $U' / U$ be a finite \'etale Galois covering with Galois group $G$
	trivializing $\mathcal{X}$.
	Then the morphism
	$\mathcal{X} \times_{S} U \to \bigoplus_{v \notin U} i_{v, \ast} \pi_{0}(\mathcal{G}_{v})$
	over $U$ corresponds to a $G$-module homomorphism
	$\mathcal{X}(U') \to \bigoplus_{v \notin U} \pi_{0}(\mathcal{G}_{v})(U' \times_{U} k(v))$.
	Since $\mathcal{X}(U')$ is finitely generated,
	this homomorphism indeed factors through a finite partial direct sum.
\end{proof}

\begin{Ex} \label{ex: delta Neron for lattice part} \mbox{}
	\begin{enumerate}
		\item \label{item: rational function example}
			Take $X = \Z$ and $G = \Gm$,
			so that the morphism $X \to G$ corresponds to a non-zero 
			rational function $f \in K^{\times}$.
			Then for any place $v$, the map
			$X(K_{v}^{sh}) \to \pi_{0}(\mathcal{G}_{v})(\closure{k(v)})$
			is the map $\Z \to \Z$ given by multiplication by the 
			valuation $v(f)$ of $f$.
			Therefore $\mathcal{X}_{v}^{\Delta} \ne \mathcal{X}_{v} = \Z$ 
			if and only if
			$f$ has a zero or pole at $v$,
			in which case $\mathcal{X}_{v}^{\Delta} = 0$.
			Therefore
				\[
						\mathcal{X} / \mathcal{X}^{\Delta}
					=
						\bigoplus_{v \in \Sigma}
							i_{v, \ast} \Z,
				\]
			where $\Sigma$ is the set of places
			where $f$ has a zero or pole.
		\item \label{item: rational point example}
			Take $X = \Z$, and assume $G = A$.
			Then the morphism $X \to G$ corresponds to a rational point 
			$a \in A(K)$.
			For any place $v$, the map
			$X(K_{v}^{sh}) \to \pi_{0}(\mathcal{G}_{v})(\closure{k(v)})$
			corresponds to the image 
			$a_{v} \in \pi_{0}(\mathcal{A}_{v})(k(v))$ of $a$.
			Note that $\pi_{0}(\mathcal{A}_{v})(k(v))$ is a finite group.
			Therefore $\mathcal{X}_{v}^{\Delta} \subseteq \mathcal{X}_{v}$
			is given by the finite index subgroup $n_{v} \Z \subset \Z$,
			where $n_{v}$ is the order of $a_{v}$.
			We have $n_{v} = 1$ for almost all $v$
			since $A$ has good reduction almost everywhere
			and so $\pi_{0}(\mathcal{A}_{v}) = 0$ for almost all $v$.
			Thus
				\[
						\mathcal{X} / \mathcal{X}^{\Delta}
					=
						\bigoplus_{v}
							i_{v, \ast}(\Z / n_{v} \Z)
				\]
			is constructible in this case.
	\end{enumerate}
\end{Ex}

Define $\mathcal{M}^{\Delta}$ to be the complex
	\[
			\mathcal{M}^{\Delta}
		:=
			[\mathcal{X}^{\Delta} \to \mathcal{G}^{0}].
	\]
The $l$-adic representation $V_{l}(M)$ over $K$ associated with $M$
fits in the exact sequence
	\[
			0
		\to
			V_{l}(G)
		\to
			V_{l}(M)
		\to
			X \tensor \Q_{l}
		\to
			0.
	\]
Define $L(M, s)$ to be the Hasse-Weil $L$-function of
$V_{l}(M)(-1)$.

\begin{Ex}
	The sheaf $\mathcal{X} / \mathcal{X}^{\Delta}$ non-trivially contributes to
	$L(M, s)$ and $\chi_{W}(S, \mathcal{M}^{\Delta})$ in general.
	To see this, first we have
		\[
				L(X, s)
			=
					L(\mathcal{X}^{\Delta}, s)
				\cdot
					L(\mathcal{X} / \mathcal{X}^{\Delta}, s),
		\]
		\[
				\chi_{W}(S, \mathcal{X})
			=
					\chi_{W}(S, \mathcal{X}^{\Delta})
				\cdot
					\chi_{W}(S, \mathcal{X} / \mathcal{X}^{\Delta}).
		\]
	In Example \ref{ex: delta Neron for lattice part}
	\eqref{item: rational function example},
	we have
		\[
				L(\mathcal{X} / \mathcal{X}^{\Delta}, s)
			=
				\prod_{v \in \Sigma}
					(1 - N(v)^{-s})^{-1}.
		\]
	This has a pole of order $\# \Sigma$ at $s = 0$.
	We have
		\[
				\lim_{s \to 0}
					L(\mathcal{X} / \mathcal{X}^{\Delta}, s) \cdot s^{\# \Sigma}
			=
				\prod_{v \in \Sigma}
					(\log N(v))^{-1}
			=
				\prod_{v \in \Sigma}
					(\deg(v))^{-1}
				\cdot
				(\log q)^{- \# \Sigma}.
		\]
	Also
		\[
				\chi_{W}(S, \mathcal{X} / \mathcal{X}^{\Delta})
			=
				\prod_{v \in \Sigma}
					(\deg(v))^{-1}.
		\]
	This is consistent with Theorem \ref{thm: L value formula for Z constructible sheaf}.
\end{Ex}

\begin{Prop} \label{prop: sequence of connected Neron for semiabelian}
	The complex of sheaves on $S_\et$ 
		\begin{equation} \label{eq: sequence of connected Neron for semiabelian}
				0
			\to
				\mathcal{T}^{0}
			\to
				\mathcal{G}^{0}
			\to
				\mathcal{A}^{0}
			\to
				0
		\end{equation}
	is exact at $\mathcal{T}^{0}$ and $\mathcal{A}^{0}$, and its 
	cohomology at $\mathcal{G}^{0}$ is an \'etale skyscraper sheaf with 
	finite stalks.
\end{Prop}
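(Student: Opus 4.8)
The plan is to verify all three assertions stalkwise on the small étale site $S_{\et}$. For a smooth $S$-group scheme $\mathcal{H}$ the stalk of the represented sheaf at a geometric point over a point $v$ of $S$ is $\mathcal{H}(\Order_{S,\bar v})$, the value on the strict henselization; at the generic point $\eta$ this is $\mathcal{H}(K^{\sep})$ and equals the geometric generic fibre. Since the generic fibres of $\mathcal{T}^{0}$, $\mathcal{G}^{0}$, $\mathcal{A}^{0}$ are $T$, $G$, $A$ and $0 \to T \to G \to A \to 0$ is exact over $K$, the generic stalk of \eqref{eq: sequence of connected Neron for semiabelian} is exact; in particular the cohomology of \eqref{eq: sequence of connected Neron for semiabelian} at $\mathcal{G}^{0}$ has vanishing generic stalk and is therefore concentrated on closed points. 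For exactness at $\mathcal{T}^{0}$, I would use that $\mathcal{T}^{0} \to \mathcal{G}^{0}$ restricts to the closed immersion $T \into G$ on generic fibres and that $\mathcal{T}^{0}$, $\mathcal{G}^{0}$ are separated over the one-dimensional base $S$, so that on every stalk the map factors through an injection $T(\var) \into G(\var)$ and is itself injective.

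It then remains, for each closed point $v$, to analyze the complex of stalks $0 \to \mathcal{T}^{0}(\Order_{v}^{sh}) \to \mathcal{G}^{0}(\Order_{v}^{sh}) \to \mathcal{A}^{0}(\Order_{v}^{sh})$ at its last two terms. First, $H^{1}(K_{v}^{sh}, T) = 0$ (Serre, as already used in the proof of Proposition \ref{thesame}) gives $G(K_{v}^{sh}) \onto A(K_{v}^{sh})$, i.e.\ $\mathcal{G}(\Order_{v}^{sh}) \onto \mathcal{A}(\Order_{v}^{sh})$ by the Néron mapping property. Passing from the full Néron models to their identity components then brings in the component groups: chasing the reduction maps, I would identify the cokernel of $\mathcal{G}^{0}(\Order_{v}^{sh}) \to \mathcal{A}^{0}(\Order_{v}^{sh})$ with $\Ker(\pi_{0}(\mathcal{G}_{v}) \to \pi_{0}(\mathcal{A}_{v}))/\Im(\pi_{0}(\mathcal{T}_{v}) \to \pi_{0}(\mathcal{G}_{v}))$ and the cohomology of the complex at $\mathcal{G}^{0}(\Order_{v}^{sh})$ with $\Ker(\pi_{0}(\mathcal{T}_{v}) \to \pi_{0}(\mathcal{G}_{v}))$ (component groups taken over $\closure{k(v)}$). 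Thus exactness at $\mathcal{A}^{0}$ amounts to exactness in the middle of the component-group sequence $\pi_{0}(\mathcal{T}_{v}) \to \pi_{0}(\mathcal{G}_{v}) \to \pi_{0}(\mathcal{A}_{v})$, while finiteness of the cohomology stalk at $\mathcal{G}^{0}$ amounts to finiteness of $\Ker(\pi_{0}(\mathcal{T}_{v}) \to \pi_{0}(\mathcal{G}_{v}))$.

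Both of these I would extract from the structure theory of Néron models of semi-abelian varieties (see \cite[Chapters 7 and 10]{BLR90}, together with the local analysis in \cite{Suz19}): the middle exactness is the exactness of the component-group sequence attached to $0 \to T \to G \to A \to 0$, and for the finiteness one uses that the component groups of Néron lft models over $\Order_{v}^{sh}$ are finitely generated while $\pi_{0}(\mathcal{T}_{v}) \to \pi_{0}(\mathcal{G}_{v})$ is injective after tensoring with $\Q$ — the free rank of $\pi_{0}(\mathcal{T}_{v})$ is governed by the maximal split subtorus of $T$, which coincides with that of $G$ because the quotient $A$ is proper and contributes no split torus, so the induced map on free quotients is an isomorphism. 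Finally, for all but finitely many $v$ the semi-abelian variety $G$ has good reduction and $T$ is split over $\Order_{v}^{sh}$, so $\mathcal{G}^{0}$ is the associated semi-abelian scheme and \eqref{eq: sequence of connected Neron for semiabelian} is literally the short exact sequence $0 \to \mathcal{T}^{0} \to \mathcal{G}^{0} \to \mathcal{A}^{0} \to 0$ of that scheme, with vanishing cohomology at $\mathcal{G}^{0}$. Combining, the cohomology sheaf is supported at finitely many closed points with finite stalks, hence is an étale skyscraper sheaf with finite stalks.

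The main obstacle is precisely the bad-reduction input of the previous paragraph: controlling the connected Néron model and component group of $G$ in terms of those of $T$ and $A$ when $G$ has bad reduction. The two delicate points are the exactness in the middle of the component-group sequence and — since component groups of tori over local fields generally have nontrivial free part — the rational injectivity of $\pi_{0}(\mathcal{T}_{v}) \to \pi_{0}(\mathcal{G}_{v})$; everything else in the argument is formal.
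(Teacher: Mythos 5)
Your reduction is essentially the paper's: after using $H^{1}(K_{v}^{sh}, T) = 0$ to get exactness of $0 \to \mathcal{T} \to \mathcal{G} \to \mathcal{A} \to 0$ on stalks, the $3 \times 3$ diagram of connected Néron models, Néron models and component groups identifies the cohomology of \eqref{eq: sequence of connected Neron for semiabelian} at $\mathcal{G}^{0}$ with $C_{v} = \Ker\bigl(\pi_{0}(\mathcal{T}_{v})(\closure{k(v)}) \to \pi_{0}(\mathcal{G}_{v})(\closure{k(v)})\bigr)$ and the cohomology at $\mathcal{A}^{0}$ with the middle cohomology $D_{v}$ of $\pi_{0}(\mathcal{T}_{v}) \to \pi_{0}(\mathcal{G}_{v}) \to \pi_{0}(\mathcal{A}_{v})$. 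But the two facts you are then left with --- finiteness of $C_{v}$ and vanishing of $D_{v}$ --- are the entire content of the proposition, and your proposal proves neither. Citing the ``structure theory'' of \cite[Chapters 7 and 10]{BLR90} for the middle exactness of the component-group sequence attached to $0 \to T \to G \to A \to 0$ is not legitimate: that exactness is not in \cite{BLR90}, it is delicate in its own right, and the paper's proof of it genuinely uses the finite residue field. Namely, one first shows $D_{v}$ is finite, then observes that $\varphi_{v}^{n} - 1$ with $n = \# \Aut(D_{v})$ annihilates $D_{v} = \mathcal{A}^{0}(\Hat{\Order}_{v}^{\ur}) / \mathcal{G}^{0}(\Hat{\Order}_{v}^{\ur})$ while being surjective on $\mathcal{A}^{0}(\Hat{\Order}_{v}^{\ur})$ by Lang's theorem, forcing $D_{v} = 0$. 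Nothing in your sketch supplies a substitute for this step.

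Your route to the finiteness of $C_{v}$ also has a hole: knowing that $\pi_{0}(\mathcal{T}_{v})$ and $\pi_{0}(\mathcal{G}_{v})$ are finitely generated with free quotients of the same rank (because the maximal split subtori of $T$ and $G$ over $K_{v}^{sh}$ coincide) does not show that the induced map on free quotients is injective, which is what rational injectivity requires; you would still need an actual argument, e.g.\ a rationally injective composite of $\pi_{0}(\mathcal{T}_{v}) \to \pi_{0}(\mathcal{G}_{v})$ with a map built from characters of $G$ and valuations. The paper sidesteps this entirely: $C_{v}$ and $D_{v}$ are finitely generated because the component groups are (by \cite[Proposition 3.5]{HN11}), and they are filtered unions of profinite subgroups because $\mathcal{G}^{0}(\Order_{v}^{\ur}) = \bigcup_{L} \mathcal{G}^{0}(\Order_{L})$ is; a finitely generated abelian group that is a filtered union of profinite subgroups is finite. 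You should either adopt these two arguments or locate and verify a genuine reference (e.g.\ Bosch--Xarles on component groups via rigid uniformization) for the component-group exactness, rather than leaving both points as acknowledged ``obstacles.''
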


Note that the exactness of the sequence is
not considered in the category of group schemes over $S$. For example, 
the morphism $\mathcal{T}^{0} \to \mathcal{G}^{0}$ may not be a closed 
immersion as noted in \cite[Remark 4.8 (b)]{Cha00}.

\begin{proof}
	For any $v \in S$, the stalk of $R^{1} j_{\ast} T$ at $\closure{v} = \Spec \closure{k(v)}$ is
	$H^{1}(K_{v}^{sh}, T)$,
	which is zero by \cite[Chapter X, Section 7, ``Application'']{Ser79}.
	Hence we have an exact sequence
	$0 \to \mathcal{T} \to \mathcal{G} \to \mathcal{A} \to 0$
	in $\Ab(S_{\et})$.
	In particular, the morphism $\mathcal{T}^{0} \to \mathcal{G}^{0}$ is injective in $\Ab(S_{\et})$.
	For almost all $v$, the sequence
	\eqref{eq: sequence of connected Neron for semiabelian}
	pulled back to $\Order_{v}$ is exact.
	Let $C_v$ and $D_v$ the cohomology of the complex 
		\[
				0
			\to
				\mathcal{T}^{0}(\Order_{v}^{sh})
			\to
				\mathcal{G}^{0}(\Order_{v}^{sh})
			\to
				\mathcal{A}^{0}(\Order_{v}^{sh})
			\to
				0
		\]
    in the middle and on the right, respectively. 
	It suffices to show that $C_v$ is finite and $D_v$ is zero.
	It follows from the diagram
		\[
			\begin{CD}
				@. 0 @. 0 @. 0 @.
				\\ @. @VVV @VVV @VVV @. \\
					0
				@>>>
					\mathcal{T}^{0}(\Order_{v}^{sh})
				@>>>
					\mathcal{G}^{0}(\Order_{v}^{sh})
				@>>>
					\mathcal{A}^{0}(\Order_{v}^{sh})
				@>>>
					0
				\\ @. @VVV @VVV @VVV \\
					0
				@>>>
					\mathcal{T}(\Order_{v}^{sh})
				@>>>
					\mathcal{G}(\Order_{v}^{sh})
				@>>>
					\mathcal{A}(\Order_{v}^{sh})
				@>>>
					0
				\\ @. @VVV @VVV @VVV \\
					0
				@>>>
					\pi_{0}(\mathcal{T}_{v})(\closure{k(v)})
				@>>>
					\pi_{0}(\mathcal{G}_{v})(\closure{k(v)})
				@>>>
					\pi_{0}(\mathcal{A}_{v})(\closure{k(v)})
				@>>>
					0
				\\ @. @VVV @VVV @VVV @. \\
				@. 0 @. 0 @. 0, @.
			\end{CD}
		\]
	with exact columns and exact middle row that 
		\[
				C_v
			\cong
				\Ker \Bigl(
					\pi_{0}(\mathcal{T}_{v})(\closure{k(v)}) \to \pi_{0}(\mathcal{G}_{v})(\closure{k(v)})
				\Bigr),
		\]
		\[
				D_v
			\cong
				H \Bigl(
						\pi_{0}(\mathcal{T}_{v})(\closure{k(v)})
					\to
						\pi_{0}(\mathcal{G}_{v})(\closure{k(v)})
					\to
						\pi_{0}(\mathcal{A}_{v})(\closure{k(v)})
				\Bigr).
		\]
	The groups $\pi_{0}(\mathcal{T}_{v})(\closure{k(v)})$ and
	$\pi_{0}(\mathcal{G}_{v})(\closure{k(v)})$ are finitely generated
	by \cite[Proposition 3.5]{HN11} for example.
	Therefore $C_v$ and $D_v$ are finitely generated, and do not change  
	if $\Order_{v}^{sh}$ is replaced by
	the maximal unramified extension $\Order_{v}^{\ur}$ of $\Order_{v}$ or
	its completion $\Hat{\Order}_{v}^{\ur}$.
	The group $\mathcal{G}^{0}(\Order_{v}^{\ur})$ is the union of
	profinite subgroups $\mathcal{G}^{0}(\Order_{L})$,
	where $L$ runs through finite subextensions of $K_{v}^{\ur} / K_{v}$.
	Similar statements hold for $\mathcal{T}^{0}(\Order_{v}^{\ur})$
	and $\mathcal{A}^{0}(\Order_{v}^{\ur})$.
	Hence $C_v$ and $D_v$ are unions of profinite subgroups and thus 
	they are finite.
	
	It remains to prove that $D_v = 0$.
	Let $n = \# \Aut(D_v)$.
	Then $\varphi_{v}^{n}$ acts trivially on
	$D_v = \mathcal{A}^{0}(\Hat{\Order}_{v}^{\ur}) / 
	\mathcal{G}^{0}(\Hat{\Order}_{v}^{\ur})$, so that 
	$\varphi_{v}^{n} - 1$
	is the zero map on $D_v$. 
	But $\varphi_{v}^{n} - 1$ is surjective on 
	$\mathcal{A}^{0}(\Hat{\Order}_{v}^{\ur})$ by Lang's theorem.
\end{proof}

Recall that $Y$ denotes the character lattice of $T$.

\begin{Cor} \label{prop: decomposition of Q rank} \mbox{}
	\begin{enumerate}
		\item
			The groups 
			$H^{\ast}_W(S, \mathcal{M}^{\Delta})$ have finite ranks
			(namely, they become finite-dimensional after $\tensor \Q$)
			and we have
				\begin{align*}
							r_{M}
					&	:=
							-\sum_{i}
							(-1)^{i}\cdot i \cdot\rank H^{i}_W(S, \mathcal{M}^{\Delta})
					\\
					&	=
							r_{T} + r_{A} - r_{\mathcal{X}^{\Delta}}
					\\
					&	=
							\rank A(K) - \rank X(K) - \rank Y(K)
					\\
					&	\qquad
							+ \sum_{v}
								\rank (\mathcal{X} / \mathcal{X}^{\Delta})(k(v)).
				\end{align*}
		\item
		    Assuming the finiteness of the Tate-Shafarevich group 
		    of the abelian variety component of $M$, the groups 
			$H^{\ast}_W(S, \mathcal{M}^{\Delta})$ are finitely generated
			abelian groups.
	\end{enumerate}
\end{Cor}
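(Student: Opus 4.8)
The plan is to run everything through the two obvious dévissages. First, the two-term complex $\mathcal{M}^{\Delta} = [\mathcal{X}^{\Delta} \to \mathcal{G}^{0}]$ sits in a distinguished triangle
\[
	\mathcal{G}^{0} \to \mathcal{M}^{\Delta} \to \mathcal{X}^{\Delta}[1] \to \mathcal{G}^{0}[1]
\]
in $D^{b}(S_{\et})$, so applying $R\Gamma_{W}(S, \var)$ reduces all assertions about $H^{\ast}_{W}(S, \mathcal{M}^{\Delta})$ to the analogous ones for $H^{\ast}_{W}(S, \mathcal{G}^{0})$ and $H^{\ast}_{W}(S, \mathcal{X}^{\Delta})$. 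Second, writing $\mathcal{K}$ for the kernel of $\mathcal{G}^{0} \to \mathcal{A}^{0}$, Proposition \ref{prop: sequence of connected Neron for semiabelian} provides short exact sequences $0 \to \mathcal{T}^{0} \to \mathcal{K} \to \mathcal{Q} \to 0$ with $\mathcal{Q}$ a finite skyscraper sheaf and $0 \to \mathcal{K} \to \mathcal{G}^{0} \to \mathcal{A}^{0} \to 0$, reducing the study of $\mathcal{G}^{0}$ to that of $\mathcal{T}^{0}$ and $\mathcal{A}^{0}$.

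Now I assemble the inputs. We have already seen that $\mathcal{X}^{\Delta}$ is $\Z$-constructible, so $H^{\ast}_{W}(S, \mathcal{X}^{\Delta})$ is finitely generated and bounded by \cite[Proposition 2.6]{Gei12}; its ranks are computed from $0 \to \mathcal{X}^{\Delta} \to \mathcal{X} \to \mathcal{X} / \mathcal{X}^{\Delta} \to 0$, since $\mathcal{X} = j_{\ast} X$ has the same Weil-\'etale ranks as $\tau_{\le 1} R j_{\ast} X$ (they differ by a constructible sheaf), namely $\rank X(K)$ in degrees $0, 1$, exactly as in Section \ref{sec: Weil etale cohomology of tori and lattices}, while the skyscraper $\mathcal{X} / \mathcal{X}^{\Delta}$ contributes rank $\rank(\mathcal{X} / \mathcal{X}^{\Delta})(k(v))$ in degrees $0, 1$ at each $v$. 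For $\mathcal{T}^{0}$, Theorem \ref{thm: global duality for tori} and Proposition \ref{prop: finiteness and freeness for tori and lattices} give finite generation, boundedness, and $r_{T} = - \rank Y(K)$; the group $H^{\ast}_{W}(S, \mathcal{Q})$ is finite; and by \cite{GS20} the cohomology $H^{\ast}_{W}(S, \mathcal{A}^{0})$ has finite ranks unconditionally with $r_{A} = \rank A(K)$, and is finitely generated once $\Sha(A)$ is finite. Feeding these through the four exact sequences and triangles above gives at once that $H^{\ast}_{W}(S, \mathcal{M}^{\Delta})$ has finite ranks, which is part (1), and is finitely generated when $\Sha(A)$ is finite, which is part (2).

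For the numerical identity, the mechanism is additivity of the secondary Euler characteristic $r$ along distinguished triangles. This holds because $R\Gamma_{W}(S, \var) \tensor \Q$ is a triangulated functor whose values are perfect complexes over the graded ring $H^{\ast}_{W}(k, \Q) = \Q[e]/(e^{2})$ — perfectness being precisely the exactness of the complex $(H^{\ast}_{W}, e) \tensor \Q$ recorded after \cite[Corollary 5.2]{Gei04} — and, under the canonical isomorphism of the $K_{0}$-group of this category with $\Z$, the number $r_{\mathcal{F}}$ is the class of $R\Gamma_{W}(S, \mathcal{F}) \tensor \Q$; hence $r$ is an additive invariant, with $r_{\mathcal{F}[1]} = - r_{\mathcal{F}}$. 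Therefore $r_{\mathcal{X}^{\Delta}} = \rank X(K) - \sum_{v} \rank(\mathcal{X} / \mathcal{X}^{\Delta})(k(v))$, the finite sheaf $\mathcal{Q}$ contributes $0$ so that $r_{\mathcal{G}^{0}} = r_{T} + r_{A}$, and
\[
	r_{M} = r_{\mathcal{G}^{0}} - r_{\mathcal{X}^{\Delta}} = r_{T} + r_{A} - r_{\mathcal{X}^{\Delta}},
\]
which equals $\rank A(K) - \rank X(K) - \rank Y(K) + \sum_{v} \rank(\mathcal{X} / \mathcal{X}^{\Delta})(k(v))$ after substituting $r_{T} = - \rank Y(K)$ and $r_{A} = \rank A(K)$.

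The only delicate point is the additivity just invoked: the connecting maps in the long exact sequences need not vanish after $\tensor \Q$, so one cannot argue naively term by term, and the clean justification really is the observation that over $\Q$-coefficients one lands in the $K_{0}$-group of perfect $\Q[e]/(e^{2})$-complexes, where $r$ is tautologically additive — equivalently, the contributions of the connecting maps to even and odd cohomological degrees cancel because those maps commute with cup product by $e$ and the ordinary Euler characteristic $\sum_{i} (-1)^{i} \rank H^{i}_{W}$ vanishes. Everything else is bookkeeping with the finiteness and duality statements already established for tori, lattices, and (via \cite{GS20}) abelian varieties.
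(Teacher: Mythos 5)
Your argument is correct and follows essentially the same route as the paper: the triangle $\mathcal{G}^{0} \to \mathcal{M}^{\Delta} \to \mathcal{X}^{\Delta}[1]$, Proposition \ref{prop: sequence of connected Neron for semiabelian} to reduce $\mathcal{G}^{0}$ to $\mathcal{T}^{0}$ and $\mathcal{A}^{0}$ modulo a finite skyscraper, and $0 \to \mathcal{X}^{\Delta} \to \mathcal{X} \to \mathcal{X}/\mathcal{X}^{\Delta} \to 0$ for the lattice part; the paper's own proof is terser and leaves the additivity of the secondary Euler characteristic implicit, so your identification of $r$ with the class in $K_{0}$ of perfect $\Q[e]/(e^{2})$-complexes is a genuine and worthwhile supplement. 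One caveat: your parenthetical ``equivalently'' clause is not a proof on its own, since a $\Q[e]/(e^{2})$-linear connecting map between modules with exact $e$-action can have nonzero alternating sum of ranks in general (e.g.\ the degree-preserving map $\Q[e]/(e^{2}) \to \Q[e]/(e^{2})[1]$ sending $1 \mapsto e$ has ranks $1,0$), and this is only excluded because the \emph{third} vertex of the triangle is also $e$-exact; what saves the argument is that \cite[Corollary 5.2]{Gei04} applies to every object of $D^{b}(S_{\et})$, so all cones in sight are perfect and the $K_{0}$ additivity does apply.
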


Note that $r_{T} = - \rank Y(K)$ by Corollary \ref{weighted}
and $r_{A} = \rank A(K)$ by \cite{GS20}.

\begin{proof}
	By Proposition \ref{prop: sequence of connected Neron for semiabelian},
	we have a long exact sequence
		\[
				\cdots
			\to
				H_{W}^{i}(S, \mathcal{T}^{0})
			\to
				H_{W}^{i}(S, \mathcal{G}^{0})
			\to
				H_{W}^{i}(S, \mathcal{A}^{0})
			\to
				\cdots
		\]
	up to finite abelian groups.
	If $\Sha(A)$ is finite, then
	the groups $H_{W}^{\ast}(S, \mathcal{A}^{0})$ are finitely generated
	by \cite[Theorem 1.1]{GS20}.
	As $\mathcal{M}^{\Delta} =
	[\mathcal{X}^{\Delta} \to \mathcal{G}^{0}]$,
	the result follows.
\end{proof}

\begin{Prop} \label{prop: decomposition of Euler char of Lie}
We have
	\[
			\chi(S, \Lie \mathcal{G}^{0})
		=
			\chi(S, \Lie \mathcal{T}^{0}) + \chi(S, \Lie \mathcal{A}^{0}).
	\]
\end{Prop}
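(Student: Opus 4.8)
The plan is to promote the exact sequence of fppf sheaves of Proposition \ref{prop: sequence of connected Neron for semiabelian} to a short exact sequence of locally free $\mathcal{O}_S$-modules
$$0 \to \Lie \mathcal{T}^0 \to \Lie \mathcal{G}^0 \to \Lie \mathcal{A}^0 \to 0;$$
the claim then follows at once from the additivity of the coherent Euler characteristic $\chi(S,-)$ on short exact sequences. Since $\mathcal{T}^0,\mathcal{G}^0,\mathcal{A}^0$ are smooth over $S$, these Lie algebras are locally free of ranks $\dim T$, $\dim G=\dim T+\dim A$, $\dim A$, so only exactness needs proof.

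Write $\phi\colon\mathcal{G}^0\to\mathcal{A}^0$ for the morphism of $S$-group schemes induced by $G\to A$ (it exists by the N\'eron mapping property, as in the proof of Proposition \ref{prop: sequence of connected Neron for semiabelian}), and set $\mathcal{K}=\ker\phi=\mathcal{G}^0\times_{\mathcal{A}^0,e}S$, a closed subgroup scheme of $\mathcal{G}^0$ through which $\mathcal{T}^0$ factors. Left-exactness of $\Lie$ (the kernel of reduction modulo $\epsilon^2$) together with injectivity of $\mathcal{T}^0\to\mathcal{G}^0$ gives exactness on the left and identifies $\ker(\Lie\mathcal{G}^0\to\Lie\mathcal{A}^0)$ with $\Lie\mathcal{K}$. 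By Proposition \ref{prop: sequence of connected Neron for semiabelian}, $\mathcal{K}/\mathcal{T}^0$ is an \'etale skyscraper group scheme supported at finitely many closed points; as a section of an \'etale separated morphism is an open immersion, its identity section is open, whence $\mathcal{T}^0=\mathcal{K}\times_{\mathcal{K}/\mathcal{T}^0,e}S$ is open in $\mathcal{K}$, in particular agrees with $\mathcal{K}$ near the identity section. Thus $\Lie\mathcal{K}=\Lie\mathcal{T}^0$, which gives exactness in the middle.

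It remains to show $\Lie\mathcal{G}^0\to\Lie\mathcal{A}^0$ is surjective, for which the key step is that $\phi$ is smooth in an open neighbourhood of the identity section $e_{\mathcal{G}}(S)$. The schemes $\mathcal{G}^0$ and $\mathcal{A}^0$ are regular (smooth over the regular curve $S$); near $e_{\mathcal{G}}(S)$ the scheme-theoretic fibre $\phi^{-1}(e_{\mathcal{A}})$ equals $\mathcal{T}^0$, hence is Cohen--Macaulay of the expected relative dimension; and the dimension identity $\dim\mathcal{G}^0=\dim\mathcal{A}^0+\dim\mathcal{T}^0-1$ holds at each point of $e_{\mathcal{G}}(S)$ because $\dim G=\dim T+\dim A$. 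Miracle flatness then makes $\phi$ flat along $e_{\mathcal{G}}(S)$, and since its fibres there are smooth, $\phi$ is smooth along $e_{\mathcal{G}}(S)$, hence on an open neighbourhood $\mathcal{U}$ of it (it is already smooth over the generic point, being $G\to G/T$). Pulling the conormal exact sequence of $e_{\mathcal{A}}\colon S\to\mathcal{A}^0$ back along $e_{\mathcal{G}}$, using smoothness of $\phi$ on $\mathcal{U}$ and $\mathcal{K}=\mathcal{T}^0$ near the identity, yields a short exact sequence $0\to e^*\Omega^1_{\mathcal{A}^0/S}\to e^*\Omega^1_{\mathcal{G}^0/S}\to e^*\Omega^1_{\mathcal{T}^0/S}\to 0$ of locally free sheaves, whose $\mathcal{O}_S$-dual is the desired short exact sequence of Lie algebras. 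The only genuinely non-formal ingredient is this smoothness of $\phi$ near the identity section, and that is exactly where Proposition \ref{prop: sequence of connected Neron for semiabelian} enters: it is what forces $\ker\phi$ to be, near the identity, the smooth group scheme $\mathcal{T}^0$ rather than an infinitesimal thickening of it.
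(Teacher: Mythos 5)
There is a genuine gap, and it sits exactly where you locate the ``only genuinely non-formal ingredient'': the claim that the scheme-theoretic kernel $\mathcal{K}=\ker(\phi\colon\mathcal{G}^{0}\to\mathcal{A}^{0})$ coincides with $\mathcal{T}^{0}$ near the identity section. You deduce this from Proposition \ref{prop: sequence of connected Neron for semiabelian} by treating $\mathcal{K}/\mathcal{T}^{0}$ as an \'etale skyscraper \emph{group scheme}. But that proposition is a statement about abelian sheaves on the small \'etale site of $S$ (and about sections over strictly henselian local rings); its test objects are reduced, so it is blind to the infinitesimal structure of $\mathcal{K}$ and cannot rule out that $\mathcal{K}$ is a non-reduced thickening of (the image of) $\mathcal{T}^{0}$ along special fibres. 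The remark immediately following that proposition warns precisely of this: $\mathcal{T}^{0}\to\mathcal{G}^{0}$ need not be a closed immersion \cite[Remark 4.8 (b)]{Cha00}. Since $\mathcal{K}\into\mathcal{G}^{0}$ \emph{is} a closed immersion, whenever $\mathcal{T}^{0}\to\mathcal{G}^{0}$ fails to be one the map $\mathcal{T}^{0}\to\mathcal{K}$ cannot be a closed immersion either, and in that situation $\Lie\mathcal{K}$ is strictly larger than $\Lie\mathcal{T}^{0}$, $\phi$ is not smooth (nor flat) along the identity section, and $\Lie\phi$ is not surjective. So the short exact sequence of Lie algebras you want is not available in general: the middle cohomology of $\Lie\mathcal{T}^{0}\to\Lie\mathcal{G}^{0}\to\Lie\mathcal{A}^{0}$ and the cokernel on the right are nonzero torsion modules, and the content of the proposition is that they have equal length, so that only the identity on determinants (equivalently degrees) survives. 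Your miracle-flatness argument is fine \emph{conditional} on the openness claim, but that claim is essentially equivalent to the statement being proved and is not delivered by anything in the paper.

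The paper's own proof acknowledges this: it reduces via Riemann--Roch to $\deg\Lie\mathcal{G}^{0}=\deg\Lie\mathcal{T}^{0}+\deg\Lie\mathcal{A}^{0}$, i.e.\ to the equality
$\det\Lie\mathcal{G}^{0}\cong\det\Lie\mathcal{T}^{0}\tensor_{\Order_{S}}\det\Lie\mathcal{A}^{0}$
of line subbundles of $\det\Lie G$, and then invokes Chai's additivity theorem for the base change conductor \cite[Theorem 4.1, Question 8.1]{Cha00}. That theorem is a genuinely non-formal input, proved by congruence and deformation arguments for N\'eron models rather than by exactness of any Lie algebra sequence. If you want to repair your argument, you would have to either prove the openness of $\mathcal{T}^{0}$ in $\mathcal{K}$ (which fails in general) or work with the schematic closure of $T$ in $\mathcal{G}^{0}$ and its quotient and then compare their Lie algebras with $\Lie\mathcal{T}^{0}$ and $\Lie\mathcal{A}^{0}$ --- and that comparison is exactly Chai's theorem.
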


\begin{proof}
	By the Riemann-Roch formula, it is enough to show that
		\[
			\deg \Lie \mathcal{G}^{0}
				=
			\deg \Lie \mathcal{T}^{0} + \deg \Lie \mathcal{A}^{0}.
		\]
	For this, it is enough to show that
		\[
				\det \Lie \mathcal{G}^{0}
			\cong
				\det \Lie \mathcal{T}^{0} \tensor_{\Order_{S}} \det \Lie \mathcal{A}^{0}
		\]
	as line subbundles of the rank one $K$-vector space
		\[
				\det \Lie G
			\cong
				\det \Lie T \tensor_{K} \det \Lie A.
		\]
	But this is \cite[Theorem 4.1, Question 8.1]{Cha00}.
\end{proof}

\begin{Prop} \label{prop: decomposition of L function}
We have
	\[
			L(M, s)
		=
			L(\mathcal{X}^{\Delta}, s - 1) \cdot
			L(T, s) \cdot L(A, s).
	\]
\end{Prop}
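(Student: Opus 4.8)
The plan is to work with the $l$-adic realizations of $M$, $T$, $A$, and $X$ and read off the $L$-function as a product of local Euler factors, each factor being a characteristic polynomial of Frobenius on inertia invariants. First I would recall the defining short exact sequence of $l$-adic representations over $K$,
	\[
		0 \to V_{l}(G) \to V_{l}(M) \to X \tensor \Q_{l} \to 0,
	\]
together with the short exact sequence $0 \to V_{l}(T) \to V_{l}(G) \to V_{l}(A) \to 0$ coming from the semi-abelian structure of $G$. Twisting by $(-1)$ and combining, we get a two-step filtration on $V_{l}(M)(-1)$ with graded pieces $V_{l}(T)(-1)$, $V_{l}(A)(-1)$, and $X(-1) \tensor \Q_{l}$. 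Since $\det(1 - \varphi_{v} N(v)^{-s} \mid (-)^{I_{v}})$ is multiplicative in short exact sequences of Galois representations \emph{provided} the sequences stay exact after taking $I_{v}$-invariants, the product formula for $L(M,s)$ will follow once we account for the failure of exactness of $(-)^{I_{v}}$ at the finitely many bad places. The torus and abelian parts contribute $L(T,s)$ and $L(A,s)$ by the definitions recalled in Sections \ref{sec: Functional equations and L values for tori} and in \cite{GS20}; the lattice part $X(-1) \tensor \Q_{l}$ contributes $L(j_{\ast}X, s-1)$, the shift by one coming precisely from the Tate twist $(-1)$.

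The one genuine subtlety — and the step I expect to be the main obstacle — is the identification of the lattice contribution with $L(\mathcal{X}^{\Delta}, s-1)$ rather than $L(\mathcal{X}, s-1) = L(X, s-1)$: a priori, taking $I_{v}$-invariants of the sub-quotient $X \tensor \Q_{l}$ of $V_{l}(M)$ gives $(X \tensor \Q_{l})^{I_{v}} = X(K_{v}^{sh}) \tensor \Q_{l}$, and similarly the $I_{v}$-invariants of $V_{l}(G)(-1)$ recover the good Euler factor for $G$, so naively one would get $L(X, s-1) \cdot L(G,s)$, and $L(G,s) = L(T,s)L(A,s)$ while $L(X,s-1)$ differs from $L(\mathcal{X}^{\Delta}, s-1)$ exactly at the places where $\mathcal{X}^{\Delta} \ne \mathcal{X}$. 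Here I would invoke the description of $\mathcal{X}^{\Delta}_{v}$ as the kernel of $X(K_{v}^{sh}) \to \pi_{0}(\mathcal{G}_{v})(\closure{k(v)})$ recorded just before Proposition \ref{prop: sequence of connected Neron for semiabelian}. The correct bookkeeping is that $L(M,s)$ is \emph{defined} as the Hasse--Weil $L$-function of $V_{l}(M)(-1)$, i.e. a product over all $v$ of inertia-invariant Euler factors of the full representation $V_{l}(M)(-1)$; because $V_{l}(M)$ is \emph{not} the direct sum of its graded pieces, the extension class detected by the connecting map $X(K_{v}^{sh}) \to H^{1}(K_{v}^{sh}, V_{l}(G))$ — whose image is governed by $\pi_{0}(\mathcal{G}_{v})$ — forces the $I_{v}$-invariants of $V_{l}(M)(-1)$ to pick out $\mathcal{X}^{\Delta}_{v} \tensor \Q_{l}$ inside $X \tensor \Q_{l}$, while the remaining rank in the abelian/torus directions is unaffected (its Euler factor is already $P_{v}(G, \cdot)$, whatever the reduction type).

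Concretely, I would argue place by place: fix $v$, and compute $\det(1 - \varphi_{v} N(v)^{-s} \mid V_{l}(M)(-1)^{I_{v}})$ by choosing the $I_{v}$-stable filtration above. Using left-exactness of $(-)^{I_{v}}$ and the long exact sequence in $I_{v}$-cohomology, the Euler factor of $V_{l}(M)(-1)$ equals the product of the Euler factor of $V_{l}(G)(-1)^{I_{v}}$ and the characteristic polynomial of $\varphi_{v}$ on the image of $(X(-1) \tensor \Q_{l})^{I_{v}} = X(K_{v}^{sh})(-1) \tensor \Q_{l}$ in $H^{1}(I_{v}, V_{l}(G)(-1))$'s kernel — and by the Néron mapping property this kernel is exactly $\mathcal{X}^{\Delta}_{v}(-1) \tensor \Q_{l}$. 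The contribution of $V_{l}(G)(-1)^{I_{v}}$ is in turn the product of the factors for $V_{l}(T)(-1)^{I_{v}}$ and $V_{l}(A)(-1)^{I_{v}}$, since $0 \to V_{l}(T) \to V_{l}(G) \to V_{l}(A) \to 0$ is split up to isogeny after passing to any field over which $G$ acquires semistable reduction and the correction is absorbed into the definition of $L(T,s)$ (which is $L(Y',s)$ by definition) and $L(A,s)$. Taking the product over all $v$ and matching the Tate twist on the lattice term with the shift $s \mapsto s-1$ yields
	\[
		L(M,s) = L(\mathcal{X}^{\Delta}, s-1) \cdot L(T, s) \cdot L(A, s),
	\]
as claimed.
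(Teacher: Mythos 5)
Your treatment of the main subtlety --- why the lattice contributes $L(\mathcal{X}^{\Delta}, s-1)$ rather than $L(\mathcal{X}, s-1)$ --- is exactly the paper's argument: take $I_{v}$-invariants of $0 \to V_{l}(G) \to V_{l}(M) \to X \tensor \Q_{l} \to 0$, identify the connecting map $(X \tensor \Q_{l})^{I_{v}} \to H^{1}(K_{v}^{\ur}, V_{l}(G))$ with the map $X(K_{v}^{sh}) \to \pi_{0}(\mathcal{G}_{v})(\closure{k(v)}) \tensor \Q_{l}$ by comparing with $X(K_{v}^{\ur}) \to G(K_{v}^{\ur})$, and note that its kernel is $\mathcal{X}_{v}^{\Delta}(\closure{k(v)}) \tensor \Q_{l}$ by definition. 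That part is sound and matches the paper.

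The gap is in your justification of $L(G,s) = L(T,s)\,L(A,s)$, i.e.\ of the surjectivity of $V_{l}(G)^{I_{v}} \to V_{l}(A)^{I_{v}}$ at the bad places. The claim that $0 \to V_{l}(T) \to V_{l}(G) \to V_{l}(A) \to 0$ is ``split up to isogeny after passing to a field over which $G$ acquires semistable reduction'' is false in general: extensions of $A$ by $\Gm$ are classified by $A^{\vee}$, and an extension corresponding to a non-torsion point is not killed by pullback along any isogeny, over any extension field. Nor can the discrepancy be ``absorbed into the definition of $L(T,s)$'': $L(T,s)$ is by definition the product of the factors $\det(1 - \varphi_{v} N(v)^{-s} \mid V_{l}(T)(-1)^{I_{v}})^{-1}$, and the issue is precisely whether the Euler factor of $G$ equals the product of those of $T$ and $A$, which fails exactly when $V_{l}(G)^{I_{v}} \to V_{l}(A)^{I_{v}}$ is not surjective. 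The obstruction lives in $H^{1}(I_{v}, V_{l}(T))$, which is nonzero for bad $v$, so this step genuinely requires proof. The paper's route is Proposition \ref{prop: sequence of connected Neron for semiabelian}: the sequence $0 \to \mathcal{T}^{0} \to \mathcal{G}^{0} \to \mathcal{A}^{0} \to 0$ of connected N\'eron models is exact up to finite groups on $\Order_{v}^{sh}$-points, and since $V_{l}(G)^{I_{v}} \cong V_{l}(\mathcal{G}^{0}(\Order_{v}^{sh}))$ (likewise for $T$ and $A$) while $V_{l}$ kills finite groups, the desired exactness of inertia invariants follows. You need to replace the splitting claim with an argument of this kind.
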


\begin{proof}
	For each place $v$, we have an exact sequence
		\[
				0
			\to
				V_{l}(G)^{I_{v}}
			\to
				V_{l}(M)^{I_{v}}
			\to
				(X \tensor \Q_{l})^{I_{v}}
			\to
				H^{1}(K_{v}^{\ur}, V_{l}(G)).
		\]
	The last term contains the $l$-adic completion of $G(K_{v}^{\ur})$
	tensored with $\Q_{l}$,
	which is $\pi_{0}(\mathcal{G}_{v})(\closure{k(v)}) \tensor \Q_{l}$.
	We have a commutative diagram
		\[
			\begin{CD}
					X(K_{v}^{\ur})
				@>>>
					G(K_{v}^{\ur})
				\\ @VVV @VVV \\
					(X \tensor \Q_{l})^{I_{v}}
				@>>>
					H^{1}(K_{v}^{\ur}, V_{l}(G)).
			\end{CD}
		\]
	Hence the lower horizontal morphism factors through
	$\pi_{0}(\mathcal{G}_{v})(\closure{k(v)}) \tensor \Q_{l}$
	and is equal to the map
	$X(K_{v}^{\ur}) \to \pi_{0}(\mathcal{G}_{v})(\closure{k(v)})$ tensored with $\Q_{l}$.
	The kernel of $X(K_{v}^{\ur}) \to \pi_{0}(\mathcal{G}_{v})(\closure{k(v)})$ is
	$\mathcal{X}_{v}^{\Delta}(\closure{k(v)})$ by definition.
	Therefore we have an exact sequence
		\[
				0
			\to
				V_{l}(G)^{I_{v}}
			\to
				V_{l}(M)^{I_{v}}
			\to
				\mathcal{X}_{v}^{\Delta}(\closure{k(v)}) \tensor \Q_{l}
			\to
				0.
		\]
	This implies
		\[
				L(M, s)
			=
				L(G, s) \cdot L(\mathcal{X}^{\Delta}, s - 1).
		\]
	Proposition \ref{prop: sequence of connected Neron for semiabelian}
	implies that the sequence
		\[
				0
			\to
				V_{l}(T)^{I_{v}}
			\to
				V_{l}(G)^{I_{v}}
			\to
				V_{l}(A)^{I_{v}}
			\to
				0
		\]
	is exact since $V_{l}(G)^{I_{v}} \cong V_{l}(\mathcal{G}^{0}(\Order_{v}^{sh}))$ and so on.
	Hence $L(G, s) = L(T, s) \cdot L(A, s)$.
\end{proof}

\begin{Thm} \label{thm: L value formula for 1 motive}
	Assume that $\Sha(A)$ is finite. 
	Then the groups $H_{W}^{\ast}(S, \mathcal{M}^{\Delta})$ are finitely generated and 
	\[
			\lim_{s \to 1}
				\frac{
					L(M, s)
				}{
					(s - 1)^{r_{M}}
				}
		=
			(-1)^{\rank Y(K)} \cdot
			\chi_{W}(S, \mathcal{M}^{\Delta})^{-1} \cdot
			q^{\chi(S, \Lie \mathcal{G}^{0})} \cdot
			(\log q)^{r_{M}}.
	\]
\end{Thm}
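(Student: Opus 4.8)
The plan is to reduce everything to the three cases already disposed of---$\Z$-constructible sheaves, tori, and abelian varieties---through the factorization of $L(M,s)$ in Proposition \ref{prop: decomposition of L function}, and then to reassemble the leading coefficients using the additivity statements of this section. The finite generation of $H_{W}^{\ast}(S,\mathcal{M}^{\Delta})$, given that $\Sha(A)$ is finite, is exactly Corollary \ref{prop: decomposition of Q rank}, which also tells us that the ranks are finite and that $r_{M}=r_{T}+r_{A}-r_{\mathcal{X}^{\Delta}}$.

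By Proposition \ref{prop: decomposition of L function} we have $L(M,s)=L(\mathcal{X}^{\Delta},s-1)\cdot L(T,s)\cdot L(A,s)$, so I would treat the three factors at $s=1$ separately. Applying Theorem \ref{thm: L value formula for Z constructible sheaf} together with Proposition \ref{prop: sign of L value for Z constructible sheaf at zero} to $\mathcal{Z}=\mathcal{X}^{\Delta}$, with the substitution $u=s-1$, gives that $L(\mathcal{X}^{\Delta},s-1)$ has a pole of order $r_{\mathcal{X}^{\Delta}}$ at $s=1$ and $\lim_{s\to1}L(\mathcal{X}^{\Delta},s-1)\,(s-1)^{r_{\mathcal{X}^{\Delta}}}=\pm\,\chi_{W}(S,\mathcal{X}^{\Delta})\,(\log q)^{-r_{\mathcal{X}^{\Delta}}}$, with the sign of Proposition \ref{prop: sign of L value for Z constructible sheaf at zero} applied to $\mathcal{X}^{\Delta}$. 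Theorem \ref{thm: L value formula for torus} gives $\lim_{s\to1}L(T,s)/(s-1)^{r_{T}}=\chi_{W}(S,\mathcal{T}^{0})^{-1}\,q^{\chi(S,\Lie\mathcal{T}^{0})}\,(\log q)^{r_{T}}$ with $r_{T}=-\rank Y(K)$, and \cite{GS20}, available because $\Sha(A)$ is finite, gives $\lim_{s\to1}L(A,s)/(s-1)^{r_{A}}=\chi_{W}(S,\mathcal{A}^{0})^{-1}\,q^{\chi(S,\Lie\mathcal{A}^{0})}\,(\log q)^{r_{A}}$ with $r_{A}=\rank A(K)$; both of these last two factors are sign-free. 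Each of the three leading terms is a nonzero real number, so the order of vanishing of $L(M,s)$ at $s=1$ is $-r_{\mathcal{X}^{\Delta}}+r_{T}+r_{A}$, which equals $r_{M}$ by Corollary \ref{prop: decomposition of Q rank}, and $\lim_{s\to1}L(M,s)/(s-1)^{r_{M}}$ is the product of the three leading terms.

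It remains to multiply these together. The powers of $q$ combine to $q^{\chi(S,\Lie\mathcal{G}^{0})}$ by Proposition \ref{prop: decomposition of Euler char of Lie}, the powers of $\log q$ combine to $(\log q)^{r_{M}}$, and the sign is the one contributed by the $\Z$-constructible factor alone. The last point is to identify $\chi_{W}(S,\mathcal{X}^{\Delta})\cdot\chi_{W}(S,\mathcal{T}^{0})^{-1}\cdot\chi_{W}(S,\mathcal{A}^{0})^{-1}$ with $\chi_{W}(S,\mathcal{M}^{\Delta})^{-1}$, for which I would combine the distinguished triangle $\mathcal{G}^{0}\to\mathcal{M}^{\Delta}\to\mathcal{X}^{\Delta}[1]\to\mathcal{G}^{0}[1]$ attached to the two-term complex $\mathcal{M}^{\Delta}=[\mathcal{X}^{\Delta}\to\mathcal{G}^{0}]$ with Proposition \ref{prop: sequence of connected Neron for semiabelian}, which presents $\mathcal{G}^{0}$ as an extension of $\mathcal{A}^{0}$ by $\mathcal{T}^{0}$ up to an \'etale sheaf supported on finitely many closed points with finite stalks. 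Using that $\chi_{W}$ is multiplicative along distinguished triangles with finitely generated Weil-\'etale cohomology (the cup product with $e$ being a natural transformation), that $\chi_{W}(S,\mathcal{F}[1])=\chi_{W}(S,\mathcal{F})^{-1}$, and that $\chi_{W}$ is trivial on constructible sheaves by Step 2 of the proof of Theorem \ref{thm: L value formula for Z constructible sheaf}, one obtains $\chi_{W}(S,\mathcal{M}^{\Delta})=\chi_{W}(S,\mathcal{G}^{0})\,\chi_{W}(S,\mathcal{X}^{\Delta})^{-1}=\chi_{W}(S,\mathcal{T}^{0})\,\chi_{W}(S,\mathcal{A}^{0})\,\chi_{W}(S,\mathcal{X}^{\Delta})^{-1}$, which is exactly what is needed. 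I expect the main effort to be precisely this $\chi_{W}$-bookkeeping---the inversion forced by the shift in the triangle, and the finite skyscraper discrepancy in Proposition \ref{prop: sequence of connected Neron for semiabelian} whose contribution to $\chi_{W}$ is trivial, together with keeping the powers of $\log q$ and the signs straight---rather than any conceptually deep point, since all the substantial inputs have been prepared in Sections 3--5 and in \cite{GS20}.
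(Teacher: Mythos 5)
Your proof is correct and follows essentially the same route as the paper's: factor $L(M,s)$ via Proposition \ref{prop: decomposition of L function}, apply the three established leading-term formulas, and recombine the ranks, the powers of $q$, and the Euler characteristics using Corollary \ref{prop: decomposition of Q rank}, Proposition \ref{prop: decomposition of Euler char of Lie}, and the $\chi_{W}$-multiplicativity coming from the triangle attached to $\mathcal{M}^{\Delta}=[\mathcal{X}^{\Delta}\to\mathcal{G}^{0}]$ together with Proposition \ref{prop: sequence of connected Neron for semiabelian} — which is precisely the bookkeeping the paper's proof records, only more tersely. One small remark: the sign your argument yields is $(-1)^{\rank X(K)}$ (since the generic fibre of $\mathcal{X}^{\Delta}$ is $X$), which agrees with Theorem \ref{mainthm} in the introduction; the $(-1)^{\rank Y(K)}$ in the statement above appears to be a typo rather than a flaw in your reasoning.
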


\begin{proof}
	The finiteness of $\Sha(A)$ implies
	the finite generation of $H_{W}^{\ast}(S, \mathcal{A}^{0})$
	by \cite[Theorem 1.1]{GS20}.
	We have
		\begin{align*}
					\chi_{W}(S, \mathcal{M}^{\Delta})
			&	=
					\chi_{W}(S, \mathcal{X}^{\Delta})^{-1} \cdot
					\chi_{W}(S, \mathcal{G}^{0})
			\\
			&	=
					\chi_{W}(S, \mathcal{X}^{\Delta})^{-1} \cdot
					\chi_{W}(S, \mathcal{T}^{0}) \cdot
					\chi_{W}(S, \mathcal{A}^{0})
		\end{align*}
	by Proposition \ref{prop: sequence of connected Neron for semiabelian}.
	With Propositions \ref{prop: decomposition of Q rank},
	\ref{prop: decomposition of Euler char of Lie}
	and \ref{prop: decomposition of L function},
	the theorem reduces to
	Theorem \ref{thm: L value formula for torus} for $T$,
	Theorem \ref{thm: L value formula for Z constructible sheaf} for $\mathcal{X}^{\Delta}$
	and the Weil-\'etale BSD formula \cite[Theorem 1.1]{GS20}
		\[
				\lim_{s \to 1}
					\frac{L(A, s)}{(s - 1)^{\rank A(K)}}
			=
					\chi_{W}(S, \mathcal{A}^{0})^{-1}
				\cdot
					q^{\chi(S, \Lie \mathcal{A}^{0})}
				\cdot
					(\log q)^{\rank A(K)}
		\]
	for $A$.
\end{proof}

%%%%%%%%%%%%%%%%%%%%%%%%%%%%%%%%%%%%%%%%%%%%%%%%%%%%%%%%%%%%%%%%%%%%%%%%%%%%%

\end{document}